\theoremstyle{plain}
\newtheorem{theorem}{Theorem}
\newtheorem{proposition}{Proposition}[section]
\newtheorem{lemma}[proposition]{Lemma}
\newtheorem{definition}[proposition]{Definition}
\newtheorem{remark}[proposition]{Remark}
\newtheorem{example}{Example}
\numberwithin{equation}{section}
\newcommand{\R}{\mathbb{R}}					
\newcommand{\C}{\mathbb{C}}					
\newcommand{\N}{\mathbb{N}}
\def\norm#1#2{\|#1\|_{#2}}
\def\refer#1{~\ref{#1}}
\def\refeq#1{~(\ref{#1})}
\def\ccite#1{~\cite{#1}}
\def\pagerefer#1{page~\pageref{#1}}
\def\suite#1#2#3{(#1_{#2})_{#2\in {#3}}}
\def\inte#1{
\displaystyle\mathop{#1\kern0pt}^\circ }
\def\sumetage#1#2{\sum_{\substack{{#1}\\{#2}}}}
\let\d=\delta
\let\lam=\lambda
\let\D=\Delta
\let\Lam=\Lambda
\let\S=\Sigma
\let\wt=\widetilde
\let\wh=\widehat
\def\cA{{\mathcal A}}
\def\cC{{\mathcal C}}
\def\cD{{\mathcal D}}
\def\cF{{\mathcal F}}
\def\DD{{\mathbb D}}
\def\cL{{\mathcal L}}
\def\cO{{\mathcal O}}
\def\cS{{\mathcal S}}
\def\cW{{\mathcal W}}
\def\cX{{\mathcal X}}
\def\S{{\mathop{\mathbb  S\kern 0pt}\nolimits}}
\def\virgp{\raise 2pt\hbox{,}}
\def\cdotpv{\raise 2pt\hbox{;}}
\def\eqdefa{\buildrel\hbox{\footnotesize def}\over =}
\def\C{\mathop{\mathbb C\kern 0pt}\nolimits}
\def\EE{\mathop{{\mathbb E \kern 0pt}}\nolimits}
\def\K{\mathop{\mathbb K\kern 0pt}\nolimits}
\def\N{\mathop{\mathbb N\kern 0pt}\nolimits}
\def\Q{\mathop{\mathbb Q\kern 0pt}\nolimits}
\def\R{{\mathop{\mathbb R\kern 0pt}\nolimits}}
\def\SS{\mathop{\mathbb S\kern 0pt}\nolimits}
\def\ZZ{\mathop{\mathbb Z\kern 0pt}\nolimits}
\def\TT{\mathop{\mathbb T\kern 0pt}\nolimits}
\def\P{\mathop{\mathbb P\kern 0pt}\nolimits}
\def \H{{\mathop {\mathbb H\kern 0pt}\nolimits}}
\newcommand{\ds}{\displaystyle}
\newcommand{\Z}{{\ZZ}}
\newcommand{\beq}{\begin{equation}}
\newcommand{\eeq}{\end{equation}}
\newcommand{\ben}{\begin{eqnarray}}
\newcommand{\een}{\end{eqnarray}}
\newcommand{\beno}{\begin{eqnarray*}}
\newcommand{\eeno}{\end{eqnarray*}}
\newcommand{\bqs}{\begin{equation*}}
\newcommand{\eqs}{\end{equation*}}
\newcommand{\andf}{\quad\hbox{and}\quad}
\newcommand{\with}{\quad\hbox{with}\quad}
\def \cFH {\cF_\H}
\def\equivH#1 {\buildrel\hbox{\tiny {$#1$}}\over \equiv}
\def\simH#1 {\buildrel\hbox{\footnotesize {$#1$}}\over \sim}
\title[Restriction theorems and Strichartz estimates on the Heisenberg group]
{Strichartz estimates and    Fourier Restriction Theorems on the Heisenberg group}
\author[H. Bahouri]{Hajer Bahouri}
\address[H. Bahouri]
{CNRS  \&  Sorbonne Universit\'e  \\
 Laboratoire Jacques-Louis Lions (LJLL) UMR  7598 \\
4, Place Jussieu\\
75005 Paris, France.}
\email{hajer.bahouri@ljll.math.upmc.fr}
\author[D. Barilari]{Davide Barilari}\address[D. Barilari]%
{Dipartimento di Matmatica "Tullio Levi-Civita" \\
      Universit{\`a} di Padova \\
 Via Trieste 63 \\
 Padova, Italy}
\email{barilari@math.unipd.it}
\author[I. Gallagher]{Isabelle Gallagher}
\address[I. Gallagher]%
{DMA UMR 8553, \'Ecole normale sup\'erieure, CNRS, PSL Research University, 75005 Paris 
 \\
and UFR de math\'ematiques, Universit\'e Paris-Diderot, Sorbonne Paris-Cit\'e, 75013 Paris, France.}
\email{gallagher@math.ens.fr}
\begin{document}

\setstcolor{red}

\begin{abstract} 
This paper is dedicated to the proof of Strichartz  estimates on the Heisenberg group~$\H^d$  for the linear Schr\"odinger  and wave equations involving the sublaplacian.  The Schr\"odinger equation on $\H^d$ is an example of a  totally non-dispersive evolution equation: for this reason the classical approach that permits to obtain Strichartz estimates from dispersive estimates is not available.
Our approach, inspired  by the Fourier transform restriction method initiated in\ccite{Tomas}, is based on  Fourier restriction theorems on~$\H^d$, using  the non-commutative Fourier transform on the Heisenberg group. 
 It enables us to obtain also an anisotropic  Strichartz estimate  for the wave equation,  for a larger range of indices than was previously known. 
\end{abstract}
\subjclass[2000]{35R03, 35Q40}
\keywords{Partial differential equations on the Heisenberg group, Strichartz estimates}
\maketitle
 
\section{Introduction}\label {intro}
\setcounter{equation}{0}
\subsection{Strichartz  estimates}\label {introst} In the past decades,  Strichartz  estimates  for linear evolution  equations such as the Schr\"odinger and wave equations, have been   a central tool    in the study  of  semilinear and quasilinear equations, which appear in numerous physical applications.  In many cases and particularly in $\R^n$, the proof of  those inequalities, which involve space-time Lebesgue norms, is a combination of an abstract functional analysis argument known as the~$TT^*$-argument (see\ccite{ginibrevelo}) and of a dispersive estimate.  Concerning the Schr\"odinger equation on~$\R^{n}$  \label{schroeqRn}
$$ (S) \qquad \left\{
\begin{array}{c}
i\partial_t u -\D u = 0\\
u_{|t=0} = u_0\,,
\end{array}
\right.  
$$
the dispersive estimate writes  (for $t \neq 0$)
\beq
\label {dispSR0}
\|u(t,\cdot)\|_{L^\infty(\R^{n})} \leq \frac 1 {(4\pi|t|)^{\frac n 2}} \|u_0\|_{L^1(\R^{n})}\,,
\eeq
and can be easily derived from the explicit expression of the solution, which is based on  Fourier analysis:
$$ u(t,\cdot)= \frac {{\rm
e}^{i \frac {|\cdot|^2} {4t} }} {(4 \pi i t)^\frac {n} 2} \star u_0\, .
$$ 
The dispersive inequality \eqref{dispSR0} which expresses  that  waves with different frequencies move at different velocities,  gives rise  when $u_0$ is in  $L^2(\R^{n})$ to the following Strichartz estimate (see for instance\ccite{bourgain1, bourgain2, bourgain, Caz1, keeltao}) for the solution to the free Schr\"odinger equation
\beq
\label {dispSTR}
\|u\|_{L^q(\R, L^p(\R^{n}))} \leq C(p,q)   \|u_0\|_{L^2(\R^{n})}\,,
\eeq
where $(p,q)$  satisfies the scaling admissibility condition
\beq
\label {admissibR}
\frac 2 q + \frac n p = \frac n 2  \with q \geq 2 \andf   (n,q,p) \neq (2,2, \infty) \, .
\eeq
It is worth noticing that the dispersive inequality~\eqref{dispSR0} also yields  the following Strichartz  inequalities for the inhomogeneous Schr\"odinger equation, which have proven to be of paramount importance in the study of semilinear  and  quasilinear Schr\"odinger equations (one can for instance consult the monograph\ccite{BCD1} and the references therein): 
if~$(p,q)$  and~$(p_1,q_1)$ satisfy the admissibility condition \eqref{admissibR}, then
$$
  \|u\|_{L^q(\R, {L^{p}(\R^{n}))}} \leq C(p,q, p_1,q_1) \Big(    \|u_0\|_{L^{2}(\R^{n})} + \|i\partial_t u -\D u\|_{L^{q'_1}(\R, {L^{{p'_1}}(\R^{n}))}}\Big)\,,
$$
 denoting by~$a'$ the dual exponent of any~$a \in [1,\infty]$\footnote{similar Strichartz estimates hold in a Sobolev framework, with adapted indices.}.
 
\medskip 
\noindent In the case of the wave equation on~$\R^{n}$  \label{waveeqRn}
$$ (W) \qquad  \left\{
\begin{array}{c}
\partial_t^2 u -\D u = 0\\
(u,\partial_t u)_{|t=0} = (u_0,u_1)\,,
\end{array}
\right. 
$$
the solution of which may be written by means of the  Fourier transform~$\mathcal F$ (in the case when~$\mathcal F u_0$  and~$\mathcal F u_1$ are supported in a ring) as $\ds u(t) = \sum_{\pm} u_{\pm}(t)$ with
\begin{equation}
\label{defgammapm}
 u_{\pm}(t) = 
{\mathcal F}^{-1} \big(
e^{\pm it|\xi|}
\gamma_\pm (\xi)\big)\, , \quad \gamma_\pm (\xi)\eqdefa \frac12 \big(
{\mathcal F} u_0(\xi) \pm \frac1{i|\xi|}{\mathcal F} u_1(\xi) 
\big)
\,,
\end{equation}
 the dispersive estimate  writes  (for $t \neq 0$)
\beq
\label {dispSR}
\|u(t,\cdot)\|_{L^\infty(\R^{n})} \leq \frac C {|t|^{\frac {n-1} 2}} ( \|u_0\|_{L^1(\R^{n})}+ \|u_1\|_{L^1(\R^{n})})\,.
\eeq
Its proof   requires more
elaborate techniques involving oscillatory integrals  and    the application of a stationary  phase theorem. This dispersive estimate leads to the  following Strichartz estimate  (see for instance\ccite{BCD1,ginibrevelo,keeltao} and the references therein)
\beq
\label {dispSTR}
\|u\|_{L^q(\R, L^p(\R^{n}))} \leq C(p,q) \big(  \|\nabla u_0\|_{L^2(\R^{n})}  + \|u_1\|_{L^2(\R^{n})} \big) \,,
\eeq
where $(p,q)$  satisfies the scaling admissibility condition
\beq
\label {admissibRwave}
 \frac 1 q + \frac n p = \frac n 2 -1\with p,q \geq 2 \andf  q<\infty\,.
\eeq
If~$(p,q)$  and~$(p_1,q_1)$ satisfy~\eqref{admissibRwave},  one can also infer  
$$
  \|u\|_{L^q(\R, {L^{p}(\R^{n}))}} \leq C(p,q, p_1,q_1) \Big(   \|\nabla u_0\|_{L^2(\R^{n})}  + \|u_1\|_{L^2(\R^{n})}  + \|\partial_t^2 u -\D u\|_{L^{q'_1}(\R, {L^{{p'_1}}(\R^{n}))}}\Big)\,.
$$

\bigskip 
\noindent  When some loss of 
dispersion occurs,    as for instance in the case of compact Riemannian manifolds and of some bounded domains, or as   was highlighted by Bahouri, G\'erard and Xu in\ccite{bgx} in the case of the Schr\"odinger   operator on~$\H^d$ (where it is shown that there is no dispersion at all),  the Euclidean strategy    referred to above fails and the problem  of obtaining Strichartz estimates is considered as very difficult. Strichartz estimates in  the setting of compact Riemannian manifolds  and bounded domains (with a possible loss of derivatives) have been obtained in a number of works (see for instance   Bourgain\ccite{bourgain3},  Burq, G\'erard and  Tzvetkov\ccite{bgt}, Ivanovici,   Lebeau and Planchon\ccite{lebeau} and the references therein). The case of the hyperbolic space (noncompact and negatively curved) is also considered in \cite{anker}.

 \smallskip Even if the study of PDEs associated with sublaplacians on nilpotent groups is nowadays classical (see for instance the pioneering works \cite{gaveau,hul,RS76}), obtaining Strichartz estimates for the Schr\"odinger operator on the Heisenberg group is still open and has to our knowledge never been tackled. Note that the Heisenberg group is one of the simplest examples of a noncommutative Lie group, whence our interest in proving those estimates in this setting. We are confident that our methods should apply to more general nilpotent Lie groups,  provided some harmonic and Fourier analysis tools (that will be introduced in the setting of~$\H^d$ in Section \ref{Fourier}) are extended from the Heisenberg framework to the context of these groups.  This is for instance the case  of  H-type groups  or more generally of  step~$2$ stratified Lie groups: in\ccite{hiero}  and\ccite{bfg}  the lack of 
dispersion for the associated Schr\"odinger operators is indeed proved.  We also refer to \cite{Ber13, Ber17} for a discussion about the link between dispersion, restriction estimates and the heat semigroup  and to \cite{gassot} for a study of the cubic Schr\"odinger equation on the Heisenberg group.

\bigskip
\noindent  In this paper our   main goal is  thus to establish   Strichartz estimates for the solutions to the linear Schr\"odinger equation on    the Heisenberg group~$\H^d$,   involving the sublaplacian, as well as for the wave equation. As already mentioned,   in\ccite{bgx} the authors show
the absence of dispersion -- they actually prove 
  that the Schr\"odinger equation on~$\H^d$ behaves as  a transport equation with respect to one direction, known as the vertical direction (i.e., along the orbits of the Reeb vector field).  But as   will  be   clear later, a salutary fact is that the Schr\"odinger  operator on~$\H^d$ behaves rather well in  the complement to that vertical direction. This enables us  to   derive anisotropic Strichartz estimates for  the  Schr\"odinger operator on~$\H^d$, by adapting   the Fourier transform restriction analysis initiated in\ccite{strichartz} and\ccite{Tomas} in the Euclidean case (see also \cite{Fefferman}); this   also leads to new, anisotropic Strichartz estimates for the wave equation, at least for the radial case.  The approach we set up here is somewhat more  challenging  than in the Euclidean case because  the Fourier analysis on the Heisenberg group is an intricate tool.

\subsection{Basic facts about the  Heisenberg group}\label {introbasic} Let us start by  recalling that the~$d$-dimensional Heisenberg group~$\H^d$ can be defined as~$T^\star\R^d \times\R$   where~$T^\star\R^d$ is  the cotangent bundle,    endowed  with the noncommutative  product law
\begin{equation}\label{lawbis} 
(Y,s)\cdot (Y',s') \eqdefa \big(Y+Y',   s+s'+2 \sigma(Y,Y')\big)\,,
\end{equation}
where\footnote{The variable $Y$ is called the horizontal variable, while the variable $s$ is known as the vertical variable.} $w=(Y,s)=(y,\eta,s)$ and $w'=(Y',s')=(y',\eta',s')$ are  elements of~$\H^d$,  while $\sigma$ denotes the  canonical symplectic form   on~$T^\star\R^d$   defined by
\begin{equation}\label{defsigma} 
\sigma(Y,Y') \eqdefa \langle\eta,y'\rangle-\langle\eta',y\rangle\quad\hbox{for all }\ 
(Y,Y')\in T^\star\R^d \times T^\star\R^d\, ,
\end{equation}
 with $\langle \eta,y\rangle$   the value of the one-form $\eta$  at~$y.$

 \medskip  
 \noindent With this point of view, the  Haar measure on $\H^d$ is simply  the  Lebesgue measure on the space~$T^\star\R^d \times \R$.
  In particular, one can define   the following (noncommutative) convolution product  for any two integrable functions~$f$ and~$g$:
\beq
\label {definConvolH}
f \star g ( w ) \eqdefa \int_{\H^d} f ( w \cdot v^{-1} ) g( v)\, dv 
= \int_{\H^d} f ( v ) g( v^{-1} \cdot w)\, dv\, .
\eeq
Even though the
convolution on the Heisenberg group is noncommutative, if one
 defines the  Lebesgue spaces  $L^p(\H^d)$  to be simply  $L^p(T^\star\R^d \times \R ),$ then
one  still obtains  H\"older and Young inequalities, in their classical and weak versions.  In order to 
distinguish the vertical coordinate from the others, we shall also be using, for any two real numbers~$1\leq p,r \leq \infty$,  the anistropic Lebesgue spaces $L^{p}_{Y}L^{r}_{s}(\H^d)$ and $L^{r}_{s}L^{p}_{Y}(\H^d)$ endowed with the mixed norms
$$
 \|f\|_{L^{p}_{Y}L^{r}_{s}}\eqdefa   \left(\int\left( \int |f(Y,s)| ^r \, ds\right)^\frac pr \, dY\right)^\frac1p\, ,\quad 
 \|f\|_{L^{r}_{s}L^{p}_{Y}}\eqdefa   \left(\int\left( \int |f(Y,s)| ^p \, dY\right)^\frac rp \, ds\right)^\frac1r\, .
 $$
 In the framework of the Heisenberg group, the scale invariance is investigated through the family of  
 dilation operators~$(\d_a)_{a>0}$ (which are compatible with the product law    \eqref{lawbis}) defined by
\begin{equation}
\label{dilation} \delta_a ( Y,  s ) \eqdefa ( a Y,  a^2s )\, .\end{equation} 
As  the determinant of~$\d_a$ is~$a^{2d+2}$, it is natural to  define 
the  homogeneous dimension of~$\H^d$   to be~$Q\eqdefa 2d+2$. 

 \medskip  
\noindent The Schwartz class~$\cS(\H^d)$ coincides with $\cS(\R^{2d+1})$, and can be characterized by the action of the sublaplacian
$$
\D_{\H}u  \eqdefa \sum_{j=1} ^d (\cX_j^2u+\Xi_j^2u) \, ,
$$
 where the horizontal vector fields~$\cX_j$ and~$\Xi_j$ are defined for~$ j\in\{1,\dots,d\}$ by
\begin{equation}
\label {horizvect}
 \cX_j\eqdefa\partial_{y_j} +2\eta_j\partial_s\andf \Xi_j\eqdefa \partial_{\eta_j} -2y_j\partial_s \, .
\end{equation}
We also define the horizontal gradient
$$
 \nabla_{\H}u \eqdefa (\cX_1 u,\ldots,\cX_d u,\Xi_1 u,\ldots,\Xi_d u) \, .
$$
Note that Sobolev spaces can be defined thanks to the sublaplacian, for instance for any real number~$s$
$$
H^s(\H^d)\eqdefa\Big\{u \in L^2(\H^d) \, ,  (-\Delta_\H)^\frac s2 u \in L^2(\H^d)  \Big\}\, ,
$$
where   non integer powers of~$-\Delta_\H$ can be defined via functional calculus. The purpose of this paper is to establish   Strichartz estimates for the linear Schr\"odinger  and wave equations  on $\H ^d$  associated  with the sublaplacian
$$
(S_\H)\quad \left\{
\begin{array}{c}
i\partial_t u -\D_\H u = f\\
u_{|t=0} = u_0\,,
\end{array}
\right. 
\qquad \quad 
(W_\H)\quad \left\{
\begin{array}{c}
\partial_t^2 u -\D_\H u = f\\
(u,\partial_t u)_{|t=0} = (u_0,u_1)\,.
\end{array}
\right. 
$$  
 As in the Euclidean case, among  the  most  notable  achievements  of Fourier analysis on the Heisenberg group that we   review in Section~\ref{Fourier}, one can mention that one can explicitly solve those equations by means of the Fourier transform. 
However as  shown by the following proposition  established
 in\ccite{bgx}, $(S_\H)$   is  a model    for totally non-dispersive evolution equations. 
\begin{proposition} [\cite{bgx}]
\label {nodispersionS}
{\sl
There exists a function~$u_0$ in the Schwartz class~$\cS(\H^{d})$ such that the solution to  the free Schr\"odinger equation $(S_\H)$  (with~$f\equiv 0$) satisfies
$$
u(t,Y,s) = u_0(Y,s+4td)\, .
$$
}
\end{proposition}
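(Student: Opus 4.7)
The plan is to construct a nontrivial Schwartz function $u_0$ that is simultaneously an eigenfunction of $\D_\H$ and $\pa_s$, so that the Schr\"odinger flow reduces on it to pure translation in the vertical variable. The tool is the partial Fourier transform $\hat{\,\cdot\,}^s$ in $s$, with dual variable $\lambda$, under which the horizontal fields \eqref{horizvect} become $\pa_{y_j}+2i\lambda\eta_j$ and $\pa_{\eta_j}-2i\lambda y_j$, and $-\D_\H$ is intertwined with the family of twisted Laplacians
$$
\cL_\lambda\eqdefa -\D_Y-4i\lambda\sum_{j=1}^d(\eta_j\pa_{y_j}-y_j\pa_{\eta_j})+4\lambda^2|Y|^2\,,\qquad \lambda\in\R\setminus\{0\}\,,
$$
on $\reel{2d}$. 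A classical reduction to a rescaled harmonic oscillator, which will be developed as part of the Fourier machinery of Section~\ref{Fourier}, shows that $\cL_\lambda$ is self-adjoint with purely discrete spectrum $\{4|\lambda|(2|n|+d):n\in\N^d\}$ and that the lowest eigenvalue $4|\lambda|d$ is attained at the Gaussian ground state $\psi_\lambda(Y)\eqdefa e^{-|\lambda||Y|^2}$. The crucial observation is that when $\lambda>0$ this eigenvalue equals $4\lambda d$, which is precisely the symbol of the first-order operator $-4id\,\pa_s$.

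Exploiting this, I would set
$$
u_0(Y,s)\eqdefa\int_0^{\infty}e^{i\lambda s}\,\chi(\lambda)\,e^{-\lambda|Y|^2}\,d\lambda\,,
$$
with $\chi\in\cD((0,\infty))$ any nontrivial smooth cutoff. By construction $\hat u_0^s$ is (up to a harmless normalization) $\chi(\lambda)\psi_\lambda(Y)$ for $\lambda>0$ and vanishes for $\lambda<0$, so it lies entirely in the ground-state eigenspace of $\cL_\lambda$ and is supported away from $\lambda=0$. Joint smoothness of the integrand, Gaussian decay in $Y$ uniform on $\supp\chi$, and repeated integration by parts in $\lambda$ (yielding arbitrary polynomial decay in $s$) ensure that $u_0\in\cS(\H^d)$; nontriviality is clear since $u_0(0,0)=\int_0^\infty\chi(\lambda)\,d\lambda$ can be arranged to be nonzero. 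Returning to the physical side, $\widehat{-\D_\H u_0}^s(Y,\lambda)=4\lambda d\,\hat u_0^s(Y,\lambda)$ for every $\lambda\in\R$, which coincides with $\widehat{-4id\,\pa_s u_0}^s$, whence
$$
\D_\H u_0=4id\,\pa_s u_0\,.
$$

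It then follows immediately that $u(t,Y,s)\eqdefa u_0(Y,s+4td)$ solves $(S_\H)$ with initial datum $u_0$ and $f\equiv 0$: indeed
$$
i\pa_t u=4id\,(\pa_s u_0)(Y,s+4td)=(\D_\H u_0)(Y,s+4td)=\D_\H u\,,
$$
as required. The one nontrivial ingredient in this argument is the spectral analysis of the family $(\cL_\lambda)_{\lambda\in\R\setminus\{0\}}$, namely the identification of its lowest eigenvalue and of the explicit Gaussian ground state $\psi_\lambda$; everything else is an explicit construction followed by a direct verification.
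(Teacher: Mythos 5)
Your proof is correct and follows essentially the same route as the paper: the functions $e^{i\lambda s}e^{-\lambda|Y|^2}$ you superpose are exactly the paper's $\Theta_\lambda$ from~(\ref{spectreDeltaHdemoeq0}), and your identity $\Delta_\H u_0 = 4id\,\pa_s u_0$ is the integrated form of the paper's eigenvalue relation $-\D_\H\Theta_\lam = 4\lam d\,\Theta_\lam$. The only cosmetic differences are that you phrase the eigenvalue computation via the twisted Laplacian $\cL_\lambda$ obtained by partial Fourier transform in $s$ (the paper just checks the relation directly), and you integrate against $\chi(\lambda)\,d\lambda$ rather than $g(\lambda)\,|\lambda|^d\,d\lambda$, which is immaterial since the cutoff is supported away from $\lambda=0$.
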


 \medbreak
\begin{remark}\label{rksh} {\sl Since the translation $
(Y,s)\mapsto (Y,s+s_0)$ leaves the  Lebesgue measure invariant for all~$s_0 \in \R$, the solution constructed in Proposition~{\rm\ref{nodispersionS}} satisfies
$$
\forall p \in [1,\infty]\,,\ \|u(t,\cdot)\|_{L^p(\H^{d})} = \|u_0\|_{L^p(\H^{d})}  
$$ 
which shows that one cannot hope for a dispersion phenomenon of the type \eqref {dispSR}.}
\end{remark}

\medbreak
\begin{proof}
In order to establish Proposition\refer{nodispersionS}, let us  introduce a family of functions on~$\H^{d}$ which are  the analogues of the solutions associated with plane waves in the classical Euclidean case, namely  
$$
(t,x)\in\R\times\R^n\longmapsto e^{i|\xi|^2 t +i\langle \xi,x\rangle}\in{\mathbb S}^1
$$
which of course satisfy
$$
(i\partial_t -\D)   e^{i|\xi|^2 t +i\langle \xi,x\rangle} =0\,.
$$
Similarly, consider the family of functions 
\beq
\label {spectreDeltaHdemoeq0}
\Theta_\lam: 
(Y,s)\in \H^d \longmapsto e^{is\lam} e^{-\lam |Y|^2}\in \C\,.
\eeq
One can  readily  check that \beq
\label {spectreDeltaHdemoeq1}
-\D_{\H} \Theta_\lam  = 4\lam d \Theta_\lam\,,
\eeq
therefore  the functions
$$
(t,Y,s)\in \R\times\H^d \longmapsto \Theta_\lam(Y,s+4td)\in \C
$$
satisfy
$$
(i\partial_t -\D_\H)  \big(  \Theta_\lam(Y, s+4td)\big) =0\,.
$$
Now let~$g$ be a function  in~$\cD(]0,\infty[)$, and define
$$
u(t,Y,s) \eqdefa \int_{\R} \Theta_\lam(Y,s+4td) g(\lam) \, |\lam|^d d\lam\,.
$$
It stems  from the Lebesgue derivation theorem that $u$ solves the Cauchy problem $(S_\H)$
with~$f \equiv 0$ and initial data
\beq
\label {dataSH}
u_0(Y,s) = \int_{\R} \Theta_\lam(Y,s) g(\lam) \, |\lam|^d d\lam\,,
\eeq
which   easily ends   the proof of the proposition.
\end{proof}

\medbreak

   Actually, as we shall see in Section \ref{Fourier} \pagerefer{newFourierconvorad}, there is a family of functions  $(\Theta^{(\ell)}_\lam)_{\ell \in \N}$ on $\H^d$ such that\footnote{The function $\Theta^{(0)}_\lam$ corresponds to the function $\Theta_\lam$ given by\refeq{spectreDeltaHdemoeq0}.} 
\begin{equation}\label{transportequationell}
(i\partial_t -\D_\H)  \big(  \Theta^{(\ell)}_\lam(Y, s+4t(2\ell+d))\big) =0\,.
\end{equation}
This readily ensures that the solution to  the free Schr\"odinger equation $(S_\H)$ associated to the Cauchy data  
$$
u^{(\ell)}_0(Y,s) \eqdefa \int_{\R} \Theta^{(\ell)}_\lam(Y, s) g(\lam) \, |\lam|^d d\lam\,,
$$
with~$g \in \cD(]0,\infty[)$,  behaves  as a transport equation, with velocity depending on $\ell$. More precisely, we have 
\beq
\label {penalization}
(i\partial_t -\D_\H)  u^{(\ell)}_0= i (\partial_t - 4 (2\ell+d) \partial_s)  u^{(\ell)}_0\,,
\eeq
which again highlights the fact  that one cannot hope for a dispersion phenomenon of the type \eqref {dispSR}.

\begin{remark}\label{rkwaveHd}
{\sl In\ccite{bgx} the authors also  prove that every solution to the wave equation on~$\H^{d}$ satisfies the dimension-independent dispersive estimate
\beq
\label{dispwaveoptimal}
\|u(t,\cdot)\|_{L^\infty(\H^{d})} \leq \frac C {|t|^{\frac12}} ( \|u_0\|_{L^1(\H^{d})}+ \|u_1\|_{L^1(\H^{d})})\,,\eeq
and show by an example similar  to the ones above   that this estimate is optimal.
The rate of decay in{\rm\refeq{dispwaveoptimal}} regardless to the dimension is due to the fact that only the center is involved in the dispersive effect. Note also    that compared with the Euclidean  framework, there is an  exchange in the rates of decay between the wave and the Schr\"odinger equations on $\H^{d}$.
It is also proved in\ccite{bgx} that the dispersive estimate~{\rm(\ref{dispwaveoptimal})} gives rise  to a Strichartz estimate
$$
\|u\|_{L^q_t L^p_{Y,s}} \leq    C_{p, q,p_1,q_1} \Big(   \|\nabla_{\H^d}  u_0\|_{L^2(\H^{d})}  + \|u_1\|_{L^2(\H^{d})}\\
 + \|f\|_{ L^{q_1'}_t L^{p'_1}_{Y,s}}\Big)
$$
with $\displaystyle \frac1q+\frac Qp = \frac Q2 -1$ and~$q \geq 2Q-1$.  
}
\end{remark}
\medbreak

\subsection{Statements of the results}
Our first goal in this paper is to establish the following Strichartz estimates for the Schr\"odinger equation on~$\H^{d}$ for radial data --- note that the Fourier transform in the radial
setting is much easier to handle, and the geometry of sets  on the Fourier side is also much easier to describe in the radial case (see for example~(\ref{defsphere}) in Section~\ref{studysurfacemeasure} for the    sphere), so we restrict our attention to that framework in this article. A function $f$ on ~$\H^{d}$ is said to be \emph{radial} if it is invariant under the action of the unitary group~$U(d)$ of $T^\star\R^d$, which implies that $f$ can be written under the form~$f(Y,s)= f(|Y|, s)$. 
\begin{theorem}
\label{STth}
{\sl
Given $(p,q)$   belonging to  the admissible set 
 $$
\cA^{\mbox{\tiny{S}}}\eqdefa\Big\{(p,q)\in [2,\infty]^2\, /\,p\leq q \quad \mbox{and} \quad\frac2q+\frac{2d}p\leq \frac Q2\Big\} \,,
$$
there is a constant~$ C_{p, q}$ such that the  solution to  the Schr\"odinger equation $(S_\H)$ associated with radial data satisfies the following  Strichartz estimate  (denoting by~$a'$ the dual exponent of any~$a \in [1,\infty]$)
\beq
\label{dispSTH}
\|u\|_{L^\infty_s L^{q}_t L^{p}_{Y}} \leq C_{p, q }  \Big(\|u_0\|_{H^{ \frac Q2 - \frac2q-\frac{2d}p}(\H^{d})} + \|f\|_{L^1(\R, H^{ \frac Q2 - \frac2q-\frac{2d}p}(\H^{d}))} \Big) \,.
\eeq 
} 
\end{theorem}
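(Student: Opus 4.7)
My plan is to follow the Fourier restriction (Tomas--Stein) approach advertised in the abstract, adapted to the non-commutative Fourier analysis on~$\H^d$. First, by Duhamel's formula and Minkowski's integral inequality in the mixed-norm space $L^\infty_s L^q_t L^p_Y$, I would reduce the proof to the homogeneous estimate (with $f\equiv0$). Next, invoking the family~$\Theta^{(\ell)}_\lam$ satisfying~\eqref{transportequationell}, I would express any radial solution in the form
$$u(t,Y,s)=\sum_{\ell\in\N}\int_\R e^{i\lam(s+4t(2\ell+d))}\,\Theta^{(\ell)}_\lam(Y,0)\,\widehat{u_0}(\lam,\ell)\,|\lam|^d\,d\lam,$$
where $\widehat{u_0}(\lam,\ell)$ denotes the radial component of the Fourier transform of~$u_0$ along the family~$(\Theta^{(\ell)}_\lam)_\ell$ to be developed in Section~\ref{Fourier}. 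This realises $u$ as a Fourier extension operator $\mathcal E$ off the ``spectrum'' $\{\tau=-4\lam(2\ell+d)\}$ of~$(S_\H)$, which plays the role of the paraboloid in the Euclidean setting.

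\textbf{Restriction / $TT^\star$.} I would then apply a spectral Littlewood--Paley decomposition in the eigenvalue parameter $4|\lam|(2\ell+d)\sim 2^{2j}$ and, on each dyadic block, reduce the desired Strichartz bound to a restriction theorem via the $TT^\star$ duality: equivalently to the boundedness of $\mathcal E\mathcal E^\star\colon L^1_s L^{q'}_t L^{p'}_Y\to L^\infty_s L^q_t L^p_Y$. The kernel of $\mathcal E\mathcal E^\star$ is a sum over~$\ell$ of oscillatory integrals in~$\lam$ built out of $\Theta^{(\ell)}_\lam$, which in the radial framework reduce to explicit Laguerre-type quantities. The heart of the argument is a decay estimate of the form
$$\bigl\|\mathcal E\mathcal E^\star F(t,\cdot,s)\bigr\|_{L^p_Y}\;\lesssim\;|t|^{-\alpha(p)}\,\bigl\|F(\cdot,\cdot,s)\bigr\|_{L^{p'}_Y}$$
on each dyadic piece, reflecting the partial dispersion in the horizontal variable~$Y$; this is precisely the Fourier restriction theorem on~$\H^d$. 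Combined with a Hardy--Littlewood--Sobolev argument in~$t$ and a Young-type inequality in~$s$ (whose dual pairing $L^1_s\leftrightarrow L^\infty_s$ accounts for the outer norm), this yields the Strichartz estimate on each spectral piece.

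\textbf{Summation, interpolation, and scaling.} The dilation group~$\delta_a$ defined in~\eqref{dilation} fixes the Sobolev loss $\frac Q2-\frac2q-\frac{2d}p$ uniquely. Summing the dyadic pieces thus costs exactly this many derivatives, and interpolating the restriction endpoint with the trivial conservation estimate $\|u\|_{L^\infty_s L^\infty_t L^2_Y}\leq\|u_0\|_{L^2(\H^d)}$ covers the entire admissible set~$\cA^{\mbox{\tiny S}}$; the condition $p\leq q$ guarantees that Minkowski's inequality may be used to pull the outer $L^\infty_s$ through the~$L^q_t$ norm in the inhomogeneous extension step.

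\textbf{Main obstacle.} The decisive difficulty lies in the restriction/decay step of the second paragraph. Unlike the Euclidean paraboloid, the ``Heisenberg paraboloid'' is not a smooth submanifold but a countable union of half-lines indexed by the Hermite parameter~$\ell$, and the eigenfunctions $\Theta^{(\ell)}_\lam$ are not uniformly controlled in~$\ell$. Establishing the required decay therefore demands sharp asymptotics for Laguerre-type oscillatory sums, uniform in~$\ell$; and it is precisely here that the radial assumption becomes crucial, as it reduces the operator-valued restriction problem produced by the non-commutative Fourier transform on~$\H^d$ to a genuinely scalar one in the variables~$(\lam,\ell)$.
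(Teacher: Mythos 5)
Your overall architecture --- reduction to the homogeneous problem, realisation of the radial solution as the inverse Fourier transform of a measure carried by the surface $\{\alpha=4|\lambda|(2\ell+d)\}$ in $\wh\R\times\wh\H^d$, a $TT^\star$ reduction to a restriction estimate in dual form, and a scaling argument over dyadic frequency blocks to convert the $L^2$ bound into the Sobolev loss $\frac Q2-\frac2q-\frac{2d}p$ --- coincides with the paper's. The genuine gap is in the step you yourself flag as the heart of the matter: you propose to prove the restriction estimate by establishing a time-decay bound $\|\mathcal E\mathcal E^\star F(t,\cdot,s)\|_{L^p_Y}\lesssim|t|^{-\alpha(p)}\|F\|_{L^{p'}_Y}$ on each dyadic block and then applying Hardy--Littlewood--Sobolev in $t$. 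This is precisely the classical dispersive route that the paper is built to avoid, and it fails here: by Proposition~\ref{nodispersionS} and \eqref{penalization}, on each Laguerre mode the flow is a pure vertical transport (the phase $4t|\lambda|(2\ell+d)$ is linear in both $\lambda$ and $\ell$, so there is no stationary phase), and at fixed $\lambda$ the horizontal propagator is the periodic-in-time harmonic-oscillator propagator; hence no global-in-time decay of the $TT^\star$ kernel in the horizontal variables is available. Nor is such a decay ``precisely the Fourier restriction theorem on $\H^d$'': M\"uller's theorem is an $L^p\to L^2(d\sigma)$ bound whose proof does not pass through time decay. What the paper actually does (Theorem~\ref{Restpraphase}) is to compute $G_{\Sigma_{\rm loc}}=\cF_{\DD}^{-1}(d\Sigma_{\rm loc})$ explicitly, write $R^\star_{\Sigma_{\rm loc}}R_{\Sigma_{\rm loc}}f$ as a convolution with it, and then replace ``time decay plus fractional integration'' by (i) a Hausdorff--Young inequality in $t$, converting the $L^{q'}_t$ norm into an $L^q_\alpha$ norm in the dual frequency variable (the constraint $p\le q$ of the theorem is inherited from the Minkowski exchange \eqref{intcomp} at this stage, not from the inhomogeneous Duhamel step as you suggest), and (ii) M\"uller's twisted-convolution estimate \eqref{est2}, which yields decay $\alpha^{-2d/p'}$ in the \emph{frequency} variable and a growth in $\ell$ summable against the weight $(2\ell+d)^{-(d+1)}$ of the surface measure; the endpoint $p=2$ requires a separate almost-orthogonality argument based on \eqref{orth} and the Hardy inequality \eqref{hardy}. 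None of these ingredients is supplied or replaced in your proposal, so the central estimate remains unproved and the route you indicate for it would not close.

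A secondary but real error concerns your interpolation endpoint: the ``trivial conservation estimate'' $\|u\|_{L^\infty_s L^\infty_t L^2_Y}\le\|u_0\|_{L^2(\H^d)}$ is not a consequence of $L^2$ conservation, which only controls $\|u\|_{L^\infty_t L^2_{Y,s}}$; taking a supremum in $s$ of the $L^2_Y$ norm is a trace-type statement that fails for general $L^2$ data. The paper's limiting case $p=q=2$, namely $\|u\|_{L^\infty_s L^2_t L^2_Y}\lesssim\|u_0\|_{L^2}$, is itself an output of the restriction theorem and is explicitly described as being of a different nature from the energy identity; the full admissible set $\cA^{\mbox{\tiny{S}}}$ is then covered by the scaling and Bernstein arguments rather than by interpolation against a conservation law.
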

\begin{remark}\label{rkindices} {\sl The above theorem deserves some comments:
\begin{itemize} 
   \item First notice that the limit case when~$\displaystyle \frac2q+\frac{2d}p= \frac Q2$ provides (when $f=0$) the estimate
$$
  \|u\|_{L^\infty_s L^{2}_t L^{2}_{Y}} \leq C   \|u_0\|_{L^2(\H^d)}
$$
which has the flavor of  the energy equality $  \|u\|_{L^\infty_t L^2(\H^d)} =   \|u_0\|_{L^2(\H^d)}$ but is of course of a different nature.

\smallskip
 \item In the case when~$\displaystyle \frac2q+\frac{2d}p < \frac Q2\virgp$ we have a larger  range of admissible Strichartz pairs, but with a loss of derivatives compared to the euclidean framework. Such phenomena   have been proved to hold in Riemannian compact manifolds as well as in the case of variable-coefficient  evolution equations with low regularity (see for instance\ccite{bch, bch2,  bourgain3, bgt, Tataru} and the references therein).
 
  \smallskip
 \item  Note that the Strichartz estimate\refeq{dispSTH} is invariant by scaling (through the scaling~$u (t,w)\mapsto u (\Lambda^{-2} t, \d_{\Lambda^{-1}}w)   
$ and~$f (t,w)\mapsto \Lambda^{-2}  f (\Lambda^{-2} t, \d_{\Lambda^{-1}}w)   
$.)  
 
 \smallskip
 \item A natural open question is to obtain Strichartz estimates without loss of derivatives for a larger  range of admissible Strichartz pairs. It is difficult to state a conjecture on the  
range of indices for which the estimates should be valid and the heart of the matter relies on the possibility to relax or not the constraint $p\geq 2$. Actually,  as   will be seen later, our approach is inspired by the remarkable paper of M\"uller\ccite{Muller} where a counter-example is provided for $\displaystyle p<p_d:= \frac{4d} {2d+1}\cdotp$  In order to investigate the sharpness of our results, one first needs to have more insight about the restriction result of M\"uller for $p_d\leq  p<2 $ which is a very challenging issue (see Section\refer{restrictionmuller} for further details). 
 \end{itemize} }
\end{remark}

\medbreak

In the case of the wave equation we obtain the following Strichartz estimate.

\begin{theorem}
\label{STthwave}
{\sl
With the above notation, given $(p,q)$    belonging to  the admissible set  
 $$
\cA^{\mbox{\tiny{W}}}\eqdefa\Big\{(p,q)\in [2,\infty]^2\, /\, p \leq q \quad \mbox{and} \quad\frac1q+\frac{2d}p \leq \frac Q2-1\Big\} \,,
$$
there is a constant~$ C_{p, q}$ such that the  solution to  the wave equation $(W_\H)$ associated with radial data satisfies the following  Strichartz estimate:
$$
\begin{aligned}
  \|u\|_{L^\infty_s L^{q}_t L^{p}_{Y}} \leq C_{p, q} \Big( &  \|u_0\|_{H^{ \frac Q2 - \frac1q-\frac{2d}p}(\H^{d})}  + \|u_1\|_{H^{ \frac Q2 - \frac1q-\frac{2d}p- 1}(\H^{d})}+   \|f\|_{L^1(\R, H^{ \frac Q2 - \frac1q-\frac{2d}p - 1}(\H^{d}))}\Big)\,.
\end{aligned}$$
}
\end{theorem}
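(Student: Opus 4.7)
The plan is to follow the Fourier restriction scheme of Theorem~\ref{STth}, with the Schr\"odinger phase replaced by the half-wave phase. Writing the solution via the functional calculus as
$$
u(t) = \cos\bigl(t\sqrt{-\D_\H}\bigr)\, u_0 + \frac{\sin\bigl(t\sqrt{-\D_\H}\bigr)}{\sqrt{-\D_\H}}\, u_1 + \int_0^t \frac{\sin\bigl((t-\tau)\sqrt{-\D_\H}\bigr)}{\sqrt{-\D_\H}}\, f(\tau)\, d\tau\,,
$$
the first step is to decompose each trigonometric function into the two half-wave propagators $e^{\pm it\sqrt{-\D_\H}}$. Setting $\gamma_\pm \eqdefa \frac{1}{2}\bigl(u_0 \pm \frac{1}{i\sqrt{-\D_\H}} u_1\bigr)$ in the spirit of~(\ref{defgammapm}) and noting that $\|\gamma_\pm\|_{H^\sigma(\H^d)} \lesssim \|u_0\|_{H^\sigma(\H^d)} + \|u_1\|_{H^{\sigma-1}(\H^d)}$ matches the right-hand side of the claimed bound, the inhomogeneous term will follow from the homogeneous estimate by Minkowski in $\tau$ and the Christ--Kiselev lemma, so it suffices to prove $\|e^{\pm it\sqrt{-\D_\H}} \gamma\|_{L^\infty_s L^q_t L^p_Y} \lesssim \|\gamma\|_{H^{Q/2 - 1/q - 2d/p}(\H^d)}$ for radial $\gamma$.

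The second step is a Littlewood--Paley decomposition $\gamma = \sum_j \Delta_j \gamma$ with respect to the sublaplacian. For a piece localized at frequency $2^j$, the dilations $\delta_a$ of~(\ref{dilation}) reduce everything to a frequency-$1$ estimate $\|e^{\pm it\sqrt{-\D_\H}} \Delta_0 \gamma\|_{L^\infty_s L^q_t L^p_Y} \lesssim \|\Delta_0 \gamma\|_{L^2(\H^d)}$. Using the non-commutative Fourier transform on $\H^d$ introduced in Section~\ref{Fourier} and its spectral description of $-\D_\H$ recalled in~(\ref{spectreDeltaHdemoeq1})--(\ref{transportequationell}), the $TT^*$ principle dualizes this bound into an adjoint Fourier restriction estimate for the \emph{wave cone} $\tau^2 = 4|\lambda|(2\ell+d)$ in the joint time-frequency spectral picture on $\H^d$, in the radial sector; this replaces the Schr\"odinger \emph{paraboloid} $\tau = -4\lambda(2\ell+d)$ used for Theorem~\ref{STth}.

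The main technical obstacle is establishing this wave-cone restriction inequality with precisely the loss of derivatives encoded in $\cA^{\mbox{\tiny{W}}}$. The geometry differs notably from the Schr\"odinger paraboloid --- the cone's second fundamental form is no longer of definite sign, so uniform convexity arguments are unavailable --- but in the radial sector the spectrum is parameterized concretely by $(\lambda,\ell) \in \R \times \N$, and one can bootstrap the restriction toolbox set up for Theorem~\ref{STth} together with the approach of M\"uller~\cite{Muller} to control the adjoint restriction operator on frequency-$1$ pieces, exactly for $(p,q) \in \cA^{\mbox{\tiny{W}}}$. The shift from $2/q$ (Schr\"odinger) to $1/q$ (wave) in the admissibility set comes precisely from the fact that $\sqrt{-\D_\H}$ scales as one derivative rather than two, which also dictates that the natural energy space for~$u_1$ is $L^2(\H^d)$ corresponding to~$\sigma - 1$ derivatives.

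Finally, summing over the Littlewood--Paley pieces uses the hypothesis $p \leq q$ together with Minkowski's inequality to commute $\ell^2_j$ with $L^q_t L^p_Y$, the loss $Q/2 - 1/q - 2d/p$ derivatives being dictated by the scaling under $\delta_a$ and reconstructing the $H^{Q/2 - 1/q - 2d/p}$-norm of~$\gamma_\pm$. Returning to $(u_0, u_1, f)$ through the definition of $\gamma_\pm$ and the Duhamel principle concludes the proof.
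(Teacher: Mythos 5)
Your proposal follows essentially the same route as the paper's Section~\ref{stSchrwave}: half-wave decomposition into $\gamma_\pm$, reduction by scaling to data frequency-localized in a unit ring, the dual restriction estimate on the two half-cones $\Sigma_\pm=\{\alpha^2=4|\lambda|(2|n|+d),\ \pm\alpha>0\}$ in $\wh\R\times\wh\H^d$ (Remark~\ref{remarkwaves}), and Bernstein's lemma to trade $\gamma_\pm$ for $(u_0,u_1)$. The one place you overestimate the difficulty is in treating the cone's degenerate curvature as the ``main technical obstacle'': the proof of Theorem~\ref{Restpraphase} never exploits curvature in the $\alpha$-variable (the $L^q_\alpha\to L^{q'}_t$ step is plain Hausdorff--Young, which is precisely why a loss of derivatives appears, and the $Y$-variable is handled by M\"uller's twisted-convolution estimate), so the argument transfers verbatim from the paraboloid to $\Sigma_\pm$; likewise, Christ--Kiselev is not needed for an $L^1_t$ source term --- Minkowski's inequality plus unitarity of the propagator suffice, as in Appendix~B.
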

\begin{remark}\label{rkindicesW} {\sl   Note that  in the limit case when~$\displaystyle \frac1q+\frac{2d}p= \frac Q2 -1\virgp$ the set of admissible Strichartz pairs is wider than for the Schr\"odinger equation. In some sense, this is not surprising since as   was already highlighted in Remark{\rm\refer{rkwaveHd}},  compared with the Euclidean  framework  there is an  exchange in the behavior  between the wave and the Schr\"odinger equations on~$\H^{d}$, and as   is well-known, in the Euclidean case  the Schr\"odinger equation enjoys better dispersive estimates than the wave equation. }
\end{remark}

 \medbreak

 Our strategy of proof of the estimates is closely related to the method developed in~\cite{strichartz} (the reader may consult\ccite{Tao} and the references therein for an overview on this subject in the Euclidean framework, as well as Section~\ref{Erren} below) consisting in reducing the problem to the study of the restriction operator on a manifold in   Fourier space --- with additional non negligible technicalities owing to the complexity of the Fourier transform  on the Heisenberg group.  That is actually the main achievement of this paper. At this stage, one should mention the Fourier restriction theorem on~$\H^{d}$ due to M\"uller (\cite{Muller}), where the author   investigated the restriction of the Heisenberg  Fourier  transform on the   unit sphere and emphasized the separate roles of the horizontal and vertical variables of~$\H^{d}$. 
 
\smallskip  
 {  Our limitation in Theorems \ref {STth} and~\ref {STthwave} to   radial data is intimately  linked to the complexity of the manifolds in the Fourier side outside the radial framework: see Remark~\ref{sphere} for more on this.} 
 
\smallskip 
 Other results extending the restriction theorem of M\"uller to more general nilpotent groups through  spectral analysis have been considered in \cite{CC13, CC17} and \cite{LS11,LS16}. Finally, let us mention that applications of non commutative Fourier analysis have been also used to study the heat equation associated to sublaplacians on groups, see for instance \cite{ABGR09}. For our purposes, we need  Fourier restriction estimates in a direct product of the  Heisenberg group and the  real line, which  will be obtained by combining the methods of M\"uller\ccite{Muller} and Tomas-Stein\ccite{Tomas}.

\medbreak

\subsection{Layout}\label {layout}
The   proof of Theorems \ref {STth} and~\ref {STthwave}
is addressed in Section \ref{proofmainth}.
A short  illustration of the proof in the (well-known) Euclidean case is provided in Section~\ref{Erren} for the convenience of the reader.
The Fourier transform on~$\H^d$ and the space of frequencies~$\wh\H^d$ are defined and described in Section~\ref{Fourier}, 
while Section~\ref{unitsphHt} is dedicated to the study of the restriction of the Heisenberg Fourier transform  to the unit sphere of the frequency space~$\wh\H^d$: this is not strictly necessary to the proof of our main results but will be a way of introducing our methods, by recovering the results of M\"uller~\cite{Muller} in a slightly simpler setting. 
 Finally in the Appendix we recall some properties of $\lambda$-twisted convolutions which  are needed in the proof.

\medskip  
To avoid heaviness, all along this article~$C$ will denote  a  positive  constant   which may vary from line to line.    We also use $f\lesssim g$  to
denote an estimate of the form $f\leq C g$.

\bigbreak\noindent{\bf Acknowledgments.}
  The authors wish to thank   warmly  Jean-Yves Chemin  for  numerous  enlightening discussions.  {They  extend their thanks to the  anonymous referees for a careful reading of the manuscript and very useful remarks, which led to several improvements.}

The second author was supported by the Grant ANR-15-CE40-0018 ``Sub-Riemann\-ian Geometry and Interactions'' of the French ANR. 

\section{Fourier restriction theorem and its applications in the Euclidean space}\label{Erren}
In this section we recall some classical results on the Fourier restriction problem and its application to PDEs in the classical, Euclidean setting for the convenience of the reader, since we shall follow a similar approach in our framework. 
To keep the notation consistent with the case of the Heisenberg group that  follows, we distinguish $\R^{n}$ and its dual $\wh \R^{n}$, which is of course isomorphic to $\R^{n}$ itself. 
\subsection{Restriction theorems}
The Fourier transform $\cF (f)$ of a function $f$ in $L^1(\R^n)$ is continuous,  thus it makes sense to restrict $\cF (f)$  to any subset of $\wh \R^n$. However, the Fourier transform of a function in $L^2(\R^n)$ is, in general, only in $L^2(\wh \R^n)$, hence completely arbitrary on a set~$\wh S$ of $\wh\R^n$ of measure zero. 

Indeed, in general, the Fourier transform of a function in $L^{p}$ for $p>1$ cannot be restricted to an hyperplane. As one can easily check, the function $f:\R^{n}\to\R$ defined by
\beq
\label{cexR}\ds  f(x)= \frac{e^{- |x'|^2}} {1+|x_1|}\, \virgp\qquad x=(x_1,x')\in \R^{n},
\eeq
belongs to $L^p(\R^n)$, for all $p>1$, but its Fourier transform does not admit a restriction on the hyperplane $\wh S$ of $\wh\R^n$ defined by $ \wh S=\{\xi \in \wh  \R^n \, /\, \xi_1=0 \}$. 

Tomas and Stein   made the surprising discovery that one can  restrict the Fourier transform of $L^p(\R^n)$ functions, for $p>1$ (and close to $1$),  to hypersurfaces $\wh S$ that are ``sufficiently curved'', as for instance the sphere.
More   generally, given a hypersurface~$\wh S \subset \wh\R^n$ endowed with a smooth measure $d\sigma$, the restriction problem asks for which pairs $(p,q)$ an inequality of the form
\begin{equation}\label{eq:estime0}
 \|\cF( f)|_{\wh S}\|_{L^q (\wh S, d \sigma)}\leq C \|f\|_{L^p (\R^n)}
\end{equation}
holds for all $f$ in $\cS(\R^n)$. 

Despite all the recent progresses in this field, this question is not completely settled in its general form and remains a topical issue. For a general survey on these questions we refer to the book of Stein \cite{stein} and the text of Tao\ccite{Tao}. 
In what follows, we focus on the case $q=2$.

By a duality argument, the above question for $q=2$ is equivalent to asking whether the adjoint operator $R^{*}_{S}$ defined by
$$R^{*}_{S}g\eqdefa\cF^{-1} (gd\sigma)$$
is continuous from $L^{2}(\wh S, d\sigma)$ to $L^{p'}(\R^n)$, where $p'$ is the dual exponent of $p$. 

A basic counterexample shows that the range of $p$ for which the estimate holds cannot be the entire interval $1\leq p\leq 2$; for details we refer to \cite{strichartz}.
\begin{example}[Knapp] {\sl 
Let $\wh S$ be the $(n-1)$-dimensional sphere in $\wh \R^{n}$ endowed with the standard measure $d\mu$. Let $g_{\delta}$ be the characteristic function of a spherical cap
$$\wh C_{\delta}\eqdefa\{x\in \wh S : |x\cdot e_{n}|<\delta \}\,.$$
With some computation one can prove that as~$\delta \to 0$,  $$\|g_{\delta}\|_{L^{2}(\wh S,d\mu)}\sim  \delta^{(n-1)/2},\qquad \| \cF^{-1}(g_{\delta}) \|_{L^{p'}( \R^{n})}\geq C \delta^{n-1}\delta^{-(n+1)/p'}\,,$$ hence the estimate can hold only if $p'\geq (2n+2)/(n-1)$, i.e., if $p\leq (2n+2)/(n+3)$.
}
\end{example}
The above range is indeed the correct one in the case of a surface with non vanishing curvature. This is the statement of the so-called Tomas-Stein theorem.
\begin{theorem}[\cite{Tomas}] \label{t:tsbase} 
{\sl Let $\wh S$ be a smooth compact hypersurface in $\wh \R^{n}$ with non vanishing Gaussian curvature at every point, and let $d\sigma$ be a smooth measure on $\wh S$. Then  there holds for every $f\in \cS(\R^n)$ and every $p\leq (2n+2)/(n+3)$,
$$
 \|\cF (f)|_{\wh S}\|_{L^2 (\wh S, d \sigma)}\leq C_p \|f\|_{L^p (\R^n)}\, .
$$}
\end{theorem}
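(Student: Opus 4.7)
The plan is to use the classical $TT^{*}$ argument to reduce the restriction estimate to a convolution bound with the kernel $\widehat{d\sigma}$, whose decay is governed by the curvature of $\wh S$, and then to exploit this decay through a dyadic decomposition and interpolation. Concretely, if $Rf \eqdefa \cF(f)|_{\wh S}$ denotes the restriction operator and $R^{*}g \eqdefa \cF^{-1}(g\,d\sigma)$ its formal adjoint, then $R^{*}Rf = f * \widehat{d\sigma}$, so the $TT^{*}$ principle reduces the sought inequality $R : L^{p}(\R^{n}) \to L^{2}(\wh S, d\sigma)$ to the convolution estimate
$$\|f * \widehat{d\sigma}\|_{L^{p'}(\R^{n})} \leq C\|f\|_{L^{p}(\R^{n})}, \qquad p = \frac{2n+2}{n+3}\,.$$
The key input is the decay bound $|\widehat{d\sigma}(x)| \leq C(1+|x|)^{-(n-1)/2}$, which follows from the classical stationary phase theorem applied to the oscillatory integral defining $\widehat{d\sigma}$: the non vanishing Gaussian curvature of $\wh S$ makes each critical point nondegenerate and the Morse-type asymptotic yields the sharp exponent.

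\medskip

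Next, fix a smooth bump $\chi$ supported in an annulus of radii comparable to one, inducing a dyadic partition of unity, and set $K_{j} \eqdefa \chi(\cdot/2^{j})\widehat{d\sigma}$ and $T_{j}f \eqdefa f * K_{j}$. From the pointwise decay above, Young's inequality yields $\|T_{j}\|_{L^{1}\to L^{\infty}} \lesssim 2^{-j(n-1)/2}$. On the Fourier side, $\cF K_{j} = \cF(\chi_{j}) * d\sigma$ where $\cF(\chi_{j})$ is an $L^{1}$-normalized bump concentrated at scale $2^{-j}$; convolution with the smooth measure $d\sigma$ essentially restricts it to a $2^{-j}$-tubular neighbourhood of $\wh S$ and produces a factor of size $2^{j}$. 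Hence $\|\cF K_{j}\|_{L^{\infty}} \lesssim 2^{j}$, and by Plancherel $\|T_{j}\|_{L^{2}\to L^{2}} \lesssim 2^{j}$. Riesz-Thorin interpolation between these two bounds gives, for $\theta = 2 - 2/p \in [0,1]$,
$$\|T_{j}\|_{L^{p}\to L^{p'}} \lesssim 2^{j[(n+1)\theta - (n-1)]/2}\,,$$
which is summable in $j$ exactly when $p < (2n+2)/(n+3)$. This establishes the estimate in the open range.

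\medskip

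The endpoint $p = (2n+2)/(n+3)$ is marginal --- the series above fails only logarithmically --- and is the main obstacle of the proof. It is recovered via Stein's analytic interpolation: one embeds $\widehat{d\sigma}$ in an analytic family of tempered distributions $\{K_{z}\}_{z \in \C}$, for instance by taking $\cF K_{z}$ proportional to $\phi_{+}^{z}$ for a defining function $\phi$ of $\wh S$, suitably normalized. On the vertical line $\mathrm{Re}(z) = -(n+1)/2$ the stationary phase argument yields an $L^{1}\to L^{\infty}$ bound with admissible growth in $\mathrm{Im}(z)$, while on $\mathrm{Re}(z) = 0$ Plancherel provides an $L^{2}\to L^{2}$ bound. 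Stein's complex interpolation then produces the endpoint $L^{p}\to L^{p'}$ bound. The delicate point is the rigorous definition of the analytic family near the surface and its uniform control in the imaginary direction of $z$.
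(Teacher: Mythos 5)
The paper does not actually prove this statement: Theorem~\ref{t:tsbase} is quoted from Tomas's paper as classical background in Section~\ref{Erren}, so there is no in-paper argument to compare yours with. Judged on its own terms, your proposal is the standard Tomas--Stein proof and the non-endpoint part is correct and complete in outline. The $TT^{*}$ reduction to $\|f*\widehat{d\sigma}\|_{L^{p'}}\lesssim\|f\|_{L^{p}}$ is right; the stationary-phase decay $|\widehat{d\sigma}(x)|\lesssim(1+|x|)^{-(n-1)/2}$ is exactly where the curvature hypothesis enters; and the two dyadic bounds $\|T_{j}\|_{L^{1}\to L^{\infty}}\lesssim 2^{-j(n-1)/2}$ and $\|T_{j}\|_{L^{2}\to L^{2}}\lesssim 2^{j}$ (the latter because $\cF\chi_{j}$ has height $2^{jn}$ and $d\sigma$ gives mass $2^{-j(n-1)}$ to a $2^{-j}$-ball) interpolate to an exponent that sums precisely for $p<(2n+2)/(n+3)$. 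Two small housekeeping points: you should say explicitly that the piece of $\widehat{d\sigma}$ supported near the origin (the $j\leq 0$ part) is handled trivially by Young's inequality, and note that $R^{*}R f$ is convolution with $(d\sigma)^{\vee}$ rather than $\widehat{d\sigma}$, which has the same decay.

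The one genuine gap is the endpoint $p=(2n+2)/(n+3)$, which the statement as written includes. Your last paragraph correctly identifies Stein's analytic interpolation as the mechanism, but it remains a sketch: you do not construct the analytic family (the usual choice is $\cF K_{z}$ proportional to a Gamma-normalized power $\phi_{+}^{z}$ of a defining function, the normalization being what kills the poles and gives admissible growth in $\mathrm{Im}\,z$), nor do you verify the $L^{1}\to L^{\infty}$ bound with at most exponential growth in $\mathrm{Im}\,z$ on the relevant vertical line, nor the identification of $K_{z}$ with $\widehat{d\sigma}$ at the interpolation point. As you yourself acknowledge, this is the delicate part, and as written the proposal only establishes the open range $p<(2n+2)/(n+3)$; either carry out the analytic family construction or restrict the claim to the strict inequality (which is all Tomas's original paper proves, and is all that is used qualitatively in the applications of Section~\ref{Erren}).
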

A similar result is possible for surfaces with vanishing Gaussian curvature (that are not flat). In this case the range of $p$ is smaller depending on the order of tangency of the surface to its tangent space. The assumption about compactness of $\wh S$ can be removed by replacing $d\sigma$ with a compactly supported smooth measure.
 
  \subsection{Application of restriction theorems to some PDEs}\label {restRd}
  
Restriction estimates have several applications, from spectral theory to number theory. Here we recall some of these to PDEs: indeed, the restriction theorem can be efficiently applied to obtain Strichartz  estimates on the solutions to some PDEs. Here we focus on the Schr\"odinger and wave equations, for which these estimates were first discovered by Strichartz in his seminal work~\cite{strichartz}.

  \bigskip
  \noindent
Let us first consider the classical  Schr\"odinger equation $(S)$  in $\R^{n}$, recalled in the introduction page~\pageref{schroeqRn}.
Given a solution $u(t,x)$ of this equation, the Fourier transform $\widehat u(t,\xi)$ with respect to the spatial variable $x$ satisfies
\begin{equation}
i\partial_{t}\widehat u(t,\xi)=-|\xi|^{2} \widehat u(t,\xi), \qquad \widehat u(0,\xi)=\widehat{u}_{0}(\xi).
\end{equation}
Solving the corresponding ODE and taking the inverse Fourier transform one has
\begin{equation}\label{eq:str0}
u(t,x)=\int_{\wh \R^{n}}e^{i(x\cdot \xi + t|\xi|^{2})}\widehat{u}_{0}(\xi)d\xi\, .
\end{equation}
Formula \eqref{eq:str0} can be interpreted as the restriction of the Fourier transform on the paraboloid~$\wh S$  in the space of frequencies $\wh \R^{n+1}=\wh \R \times \wh \R^{n}$,  defined as
$$\wh S\eqdefa\Big\{(\alpha,\xi)\in \wh \R\times \wh\R^{n}\mid \alpha=|\xi|^{2} \Big\}\, .$$
Let us endow $\wh S$ with the measure $d\sigma=d\xi$ induced by the projection $\pi: \wh \R\times \wh \R^{n}\to \wh \R^n$ onto the second factor. More formally one should write\footnote{Given $T:M\to N$ and $\mu$ measure on $M$ we can define a measure $T_{\sharp}\mu$ on $N$ as $T_{\sharp}\mu(A)=\mu(T^{-1}(A))$.} $d\sigma=(\pi|_{\wh S}^{-1})_{\sharp}d\xi$. Notice that~$\pi|_{\wh S}$ is invertible and $d\sigma$   is not the intrinsic surface measure of $\wh S$, which is written in coordinates as $d\mu=\sqrt{1+2|\xi|}d\xi$.

\medskip

Given $\widehat{u}_{0}:\wh \R^{n}\to \C$ define $g:\wh S \to \C$ as $g=\wh{u}_{0}\circ \pi|_{\wh S}$. In other words $g(|\xi|^{2},\xi)=\widehat{u}_{0}(\xi)$.
By construction, for $\wh u_{0}\in L^{2}(\wh \R^{n})$ one has $g\in L^{2}(\wh S,d\mu)$ and $\|u_{0}\|_{L^{2}(\wh \R^{n})}=\|g\|_{L^{2}(\wh S,d\mu)}$.
Then
$$u(t,x)=\int_{\R^{n}}e^{i(x\cdot \xi + t|\xi|^{2})}\widehat{u}_{0}(\xi)d\xi = \int_{\wh S}e^{i y\cdot z} g(z)d\sigma(z)
$$
where $y=(t,x)$ and $z=(\alpha,\xi)$. Theorem~\ref{t:tsbase}, in dual form, tells us that
\begin{equation} 
\| \cF^{-1}(gd\sigma) \|_{L^{p'}(\wh \R^{n+1})}\leq C_p \, \|g\|_{L^{2}(\wh S,d\mu)}\, ,
\end{equation}  
for all  $g\in L^{2}(\wh S,d\mu)$ and all $p' \geq 2(n+2)/n$ (we stress that we apply the result in dimension~$n+1$, i.e., in $\R\times \R^{n}=\R^{n+1}$).

Hence applying the statement to~$g$ related to a initial data $u_{0}$ such that $\widehat{u}_{0}$ is supported on a unit ball (which can be translated in a compact support for $d\sigma$) one has by the Plancherel formula
\begin{equation}\label{eq:str1}
\|u\|_{L^{p'}(\R^{n+1})}\leq C\|u_{0}\|_{L^{2}(\R^{n})}\, ,
\end{equation}
for all $p' \geq 2(n+2)/n$.
 
\medskip
A scaling argument and the density of spectrally localized functions in~$L^2(\R^{n})$, give for~$p'= 2+\frac{4}{n}$ and all~$u_0 \in L^{2}(\R^{n})$
\begin{equation}\label{eq:strlambda00}
\|u\|_{L^{\frac{2n+4}{n}}(\R,
 L^{\frac{2n+4}{n}} (\R^{n}))}\leq C \|u_{0}\|_{L^{2}(\R^{n})}\, .
\end{equation}

\medskip
One can similarly prove a Strichartz estimate for the wave equation $(W)$ in the Euclidean space recalled on page~\pageref{waveeqRn},
by using the   representation formula~(\ref{defgammapm}). The solution can be seen as a sum of two parts, each of which is the restriction of the Fourier transform on one of the two halves of the cone 
 $$\wh S_{\pm}\eqdefa \Big\{(\alpha,\xi)\in \wh \R\times \wh\R^{n}\mid \alpha^{2}=|\xi|^{2}, \pm\alpha>0 \Big\}\, ,$$
each of which endowed with the measure defined by the projection $\pi: \wh \R\times \wh \R^{n}\to \wh \R^n$ onto the second factor (cf.\ the discussion above).

Now let us first assume that~$\gamma_{\pm}$ is frequency localized in a unit ring~$\mathcal C_1$ centered at zero. Then for any $p' \geq 2(n+2)/n$ we have  $$ 
 \|u\|_{L^{p'}(\R^{n+1})} \leq C \| {\mathcal F}_\H^{-1}\gamma_{\pm} \|_{L^{2}(\R^n)} \,.
  $$
As above, for $p'=(2n+2)/(n-2)$, we conclude by scaling arguments and the density  in~$
L^2(\R^n)$ of functions whose Fourier transform is compactly supported in rings centered at zero.

\begin{remark} \label{r:erren} {\sl Notice that to apply the Fourier restriction to evolution PDEs and obtain Strichartz estimates, one  applies the result to a surface in the space $\R^{n+1}=\R\times \R^{n}$, namely the paraboloid and the cone for the Schr\"odinger and wave equation, respectively. 

When dealing with equations defined on the Heisenberg group $\H^{d}$, one is naturally lead to consider surfaces in the space $\R\times \H^{d}$, which is not equal to $\H^{d'}$ for some $d'$. Hence it is not enough to know restriction theorems in $\wh \H^{d}$ (cf.\ Section~{\rm\ref{unitsphHt}}) but one   needs  to adapt these results to surfaces in $\wh \R\times \wh \H^{d}$ (cf.\ Section~{\rm\ref{proofmainth}}).
}
\end{remark}

\section{Fourier analysis on $\H^d$}\label{Fourier}

\subsection{The Fourier transform on $\H^d$}\label {fouriertransf}
As the Heisenberg group  is  noncommutative, defining the Fourier transform of integrable functions on $\H^d$
by means of characters is not   relevant. The standard way consists 
 in using irreducible  representations  of $\H^d$, and in that case the Heisenberg Fourier transform~${\mathfrak{F}}_\H f  (\lambda)$ is not 
a complex valued function on some ``frequency space" as in the Euclidean case, but a family of bounded operators on~$L^2(\R^d)$ (see Corwin and Greenleaf~\cite{corwingreenleaf} for instance for more details). 
Starting from 
the so-called  Schr\"odinger representation, in\ccite{bcdh} and\ccite{bcdh2}  the authors  introduce an alternative definition of the Fourier transform on~$\H^d$ in terms of functions acting on some frequency set $\wt\H^d$. This point of view (which turns out to be equivalent to the classical definition) consists in defining the Fourier transform of an integrable function $f$ on $\H^d$ by projecting~${\mathfrak{F}}_\H(\lam)$ onto the orthonormal basis of~$L^2(\R^d)$  given by   Hermite functions.
This enables to see the Fourier transform of a function $f$ in $L^1(\H^d)$  as the  mean value of~$f$ modulated by some oscillatory functions in the following way:
 \beq
\label {defFH}
\cF_\H f(\wh w) \eqdefa \int_{\H^d}  \overline{e^{is\lam} \cW(\wh w,Y)}\, f(Y,s) \,dY\,ds\,,
 \eeq
for any $\wh w\eqdefa(n,m,\lambda)$ in $\wt\H^d\eqdefa\N^{2d}\times \R\setminus\{0\}$, with $\cW$ the Wigner transform of the (renormalized) Hermite functions
 \beq
\label {defW}
\cW(\wh w,Y)
\eqdefa\int_{\R^d} e^{2i\lam\langle \eta,z\rangle} H_{n,\lam}(y+z) H_{m,\lam} (-y+z)\,dz\,.
 \eeq
Here~$H_{m,\lam}$ stands for the renormalized Hermite function on~$\R^d$, namely~$H_{m,\lam} (x)\eqdefa |\lam|^{\frac d 4} H_m(|\lam|^{\frac 12} x)$, with~$ \suite H m {\N^d}$ the Hermite orthonormal basis of~$L^2(\R^d)$ given by the eigenfunctions of the harmonic oscillator:
$$
-(\D -|x|^2) H_m= (2|m|+d) H_m\, ,
$$
specifically \beq
\label {hermite}
H_m \eqdefa  \Bigl(\frac 1 {2^{|m|} m!}\Bigr) ^{\frac 12} \prod_{j=1}^d  \big(-\partial_j H_0+ x_jH_0\big)^{m_j}  \, ,
\eeq
with $H_0(x)\eqdefa \pi^{-\frac d 4} e^{-\frac {|x|^2} 2}$,  $m!\eqdefa m_1!\dotsm m_d!\,$ and $\,|m|\eqdefa m_1+\cdots+m_d.$

\bigskip In this setting, the  classical 
     statements of Fourier analysis 
hold   in a similar way to the Euclidean case, namely the inversion and Fourier-Plancherel formulae read
\beq
\label {inverseFourierH}f(w) = \frac {2^{d-1}}  {\pi^{d+1} }   \int_{\wt \H^d} 
e^{is\lam} \cW(\wh w, Y)\cF_\H f(\wh w) \, d\wh w 
 \eeq
and
 \beq
\label {FPH}
(\cF_\H f|\cF_\H g)_{L^2(\wt \H^d)}  = \frac {\pi^{d+1}} {2^{d-1}} (f|g)_{L^2(\H^d)}\, ,
\eeq
with  the notation
\beq
\label {measurefrequency}
\int_{\wt \H^d} \theta (\wh w)\,d\wh w \eqdefa \int_{\R} \sum_{(n,m)\in \N^{2d} }\theta(n,m,\lam) |\lam|^d\,d\lam\,.
\eeq
By straightforward computations we find that 
    \beq
\label {imprelW}
-\D_\H \bigl(e^{is\lam} \cW(\wh w,Y)\bigr) =   4|\lam| (2|m|+d)  e^{is\lam} \cW(\wh w,Y) \, ,\eeq
 for any~$\wh w=(n,m,\lambda)$  in $\wt\H^d$, which readily implies that 
$$
\cF_\H( \Delta_\H f) (\wh w) =-4| \lam | (2|m|+d)\cF_\H( f)(\wh w)\,.
$$
This formula allows to give a definition of a function whose Fourier transform is compactly supported, in the following way.
\begin{definition} \label{freqlocgen}
 {\sl We say that a   function~$f$ on~$\H^d$ is frequency localized in a ball~${\mathcal B}_\Lambda$ centered at 0  of radius~$\Lambda$ if there exists an  even  function~$\psi$ in $\cD(\R)$   supported in~${\mathcal B}_1$  and equal to~1 near 0 such that\footnote{where~$\psi(-\D_\H) $ is defined by the functional calculus of the self-adjoint operator~$-\D_\H$.}  
$$
f  = \psi(- \Lambda^{-2}\D_\H) \, f \, ,   
$$
which is equivalent to stating that  for any $\wh w=(m,n,\lam)$ in $\wt {\H}^d$,
$$
 \cFH (f)(n,m,\lam) = \psi (\Lam^{-2} 4|\lam|(2|m|+d))  \, \cFH (f)(n,m,\lam)\, .
$$
Similarly we say that a   function~$f$ on~$\H^d$ is frequency localized in a ring~${\mathcal C}_\Lambda$ centered at 0  of  small radius~$\Lambda/2$ and large radius~$\Lambda$ if there exists an  even  function~$\phi$ in $\cD(\R)$   supported in~${\mathcal C}_1$  and equal to 1 in a ring~${\mathcal C}'$ contained in~${\mathcal C}_1$ such that
$$
f  = \phi(- \Lambda^{-2}\D_\H) \, f \, ,   
$$
which is equivalent to stating that  for any $\wh w=(m,n,\lam)$ in $\wt {\H}^d$,
$$
 \cFH (f)(n,m,\lam) = \phi (\Lam^{-2} 4|\lam|(2|m|+d))  \, \cFH (f)(n,m,\lam)\, .
$$
 }
\end{definition}
One of the interests of this definition lies in the following proposition, whose proof may be found in\ccite{bahourigallagher} and\ccite{bgx}.
 \begin{lemma}
\label  {bernstein}
{\sl  With the above notation,  
\begin{itemize}
\item if~$f$  is frequency
localized in~${\mathcal B}_\Lambda$, 
  then for all $1 \leq p \leq q \leq \infty$,      $k\in\N$ and $\beta\in\N^{2d}$ with $|\beta|=k,$  there is a constant~$C_k$ depending only on~$k$ such that
\begin{equation}
\label{eq:lech1}  \norm{\cX^\beta f}{L^q(\H^d)}\leq C_k \Lam^{k+Q (\frac{1}{p}-\frac{1}{q}) } \norm
f{L^p(\H^d)} \, ,
\end{equation}
where $\cX^\beta$ denotes a product of $|\beta|$ vectors fields of type \eqref{horizvect};

\item if~$f$  is frequency
localized in~$\cC_\Lambda$,  then for all $ p \geq 1$ and     $s\in\R$, there is a constant~$C_s$ depending only on~$s$ such that
 \begin{equation}
\label{eq:lech3} 
 \frac 1 {C_s} \Lam^{s} \norm f {L^p(\H^d)} \leq
 \norm{(-\Delta_\H)^{\frac s 2}  f}{L^p(\H^d)} \leq C_s
\Lam^{ s} \norm f{L^p(\H^d)}\, \cdot
\end{equation}
 \end{itemize}}
\end {lemma}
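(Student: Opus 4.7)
The plan is to represent the functional-calculus operator $\psi(-\Lambda^{-2}\Delta_\H)$ as right-convolution by a kernel, rescale to $\Lambda=1$ by the dilation $\delta_\Lambda$, and then invoke Young's inequality on $\H^d$. The crucial analytical input is \emph{Hulanicki's theorem}: for every $\psi\in\cD(\R)$, the operator $\psi(-\Delta_\H)$ is right-convolution with a Schwartz-class kernel $k_\psi\in\cS(\H^d)$. Granting this, the intertwining $\Delta_\H(g\circ\delta_\Lambda)=\Lambda^2(\Delta_\H g)\circ\delta_\Lambda$ shows that
$$
\psi(-\Lambda^{-2}\Delta_\H)\,g \;=\; g\star k_{\psi,\Lambda}\,,\qquad k_{\psi,\Lambda}(w)\eqdefa \Lambda^Q\,k_\psi(\delta_\Lambda w)\,,
$$
and since $\cX_j,\Xi_j$ are left invariant, they commute with right convolution and scale as $\cX^\beta(k\circ\delta_\Lambda)=\Lambda^{|\beta|}(\cX^\beta k)\circ\delta_\Lambda$. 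Combining these observations yields the master identity
$$
\norm{\cX^\beta k_{\psi,\Lambda}}{L^r(\H^d)} \;=\; \Lambda^{k+Q(1-\frac1r)}\,\norm{\cX^\beta k_\psi}{L^r(\H^d)}\,.
$$

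To prove assertion (i), use the hypothesis $f=\psi(-\Lambda^{-2}\Delta_\H)f$ to write $\cX^\beta f = f\star \cX^\beta k_{\psi,\Lambda}$, and apply Young's inequality on $\H^d$ with $1+\frac1q=\frac1p+\frac1r$, equivalently $1-\frac1r=\frac1p-\frac1q$. One obtains
$$
\norm{\cX^\beta f}{L^q(\H^d)}\;\leq\; \norm{\cX^\beta k_{\psi,\Lambda}}{L^r(\H^d)}\,\norm{f}{L^p(\H^d)}\;\leq\; C_k\,\Lambda^{k+Q(\frac1p-\frac1q)}\,\norm{f}{L^p(\H^d)}\,,
$$
where $C_k$ is finite because $\cX^\beta k_\psi\in\cS(\H^d)\subset L^r(\H^d)$ for every $r\in[1,\infty]$.

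For assertion (ii), the support of $\phi$ lies in a ring away from zero, so $\tilde\psi(\xi)\eqdefa \xi^{s/2}\phi(\xi)$ belongs to $\cD(]0,\infty[)$; then $(-\Delta_\H)^{s/2}f = \Lambda^s\,\tilde\psi(-\Lambda^{-2}\Delta_\H)f$, and the upper bound follows from the case $p=q$, $r=1$ of the previous step applied to $\tilde\psi$, noting that $\norm{k_{\tilde\psi,\Lambda}}{L^1}$ is $\Lambda$-independent. For the reverse inequality, choose $\chi\in\cD$ supported away from zero and equal to $1$ on the support of $\phi$; since $f=\chi(-\Lambda^{-2}\Delta_\H)f$, one can write
$$
f \;=\; \Lambda^{-s}\,\psi_s(-\Lambda^{-2}\Delta_\H)\bigl[(-\Delta_\H)^{s/2}f\bigr]\,,\qquad \psi_s(\xi)\eqdefa \xi^{-s/2}\chi(\xi)\in\cD(]0,\infty[)\,,
$$
and conclude identically.

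The only genuinely non-trivial step is the existence and rapid decay of the kernel $k_\psi$, namely Hulanicki's theorem: it is a noncommutative harmonic-analysis fact resting on careful estimates for the heat semigroup $e^{t\Delta_\H}$. Once this is in hand, the scaling bookkeeping and Young's inequality dispatch both parts in exact parallel with the classical Euclidean Bernstein inequalities.
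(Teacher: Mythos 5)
Your proof is correct, and it coincides with the argument behind the paper's own treatment: the paper does not prove Lemma~\ref{bernstein} but defers to \cite{bahourigallagher} and \cite{bgx}, where the proof is exactly the one you give — Hulanicki's theorem \cite{hul} providing a Schwartz convolution kernel $k_\psi$ for $\psi(-\Delta_\H)$, the dilation identity $k_{\psi,\Lambda}=\Lambda^{Q}\,k_\psi\circ\delta_\Lambda$, and Young's inequality for the (noncommutative) convolution on $\H^d$. All the scaling bookkeeping, the use of left-invariance of the $\cX_j$ to pass derivatives onto the kernel, and the ring-localization trick for the two-sided estimate \eqref{eq:lech3} are handled correctly.
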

It will be useful later on to  observe  that for any function~$f$ in~$L^1(\H^d)$ and any positive real number~$a$, there holds
\begin{equation}\label {newFourierconvoleq-1}
\forall \wh w=(n,m,\lambda)\in\wt\H^d\,,\  \cFH (f\circ \d_a) (\wh w) =   a^{-Q}\cFH(f)(n,m,a^{-2}\lam)\,.
\end{equation}
Let us also emphasize that if~$f$ and~$g$ are two functions of~$L^1(\H^d)$ then for any~$\wh w=(n,m,\lambda)$ in $\wt\H^d,$
\beq
\label {newFourierconvoleq1}
 \cFH (f\star g) (\wh w)   = ( \cF_\H f \cdot \cF_\H g)(\wh w)\eqdefa \sum_{p\in \N^{d}} \cF_\H f(n,p,\lam)\cF_\H g(p,m,\lam)\,.
 \eeq
In the radial framework (recall that~$f$ is {radial} if it is invariant under the action of the unitary group~$U(d)$  of $T^\star\R^d$), which is our concern in this paper,   it turns out  that    for any function $f$ in~$L_{\rm {rad}}^1(\H^d)$ there holds
 \beq
\label {newFourierrad} \cFH (f) (n, m, \lam)=  \cFH (f) (n, m, \lam)\delta_{n,m}= \cFH (f) (|n|, |n|, \lam) \delta_{n,m} \,.
\eeq
The interested reader can consult for instance\ccite{bfg2, farautharzallah, nach}.
Actually  the Fourier transform~$\cFH$ acts  in the following way  on radial functions:
$$
\cFH (f) (\ell, \ell, \lam) =  \begin{pmatrix} \ell +d-1 \\ \ell \end{pmatrix}^{-1}\int_{\H^d}  \overline{e^{is\lam} \wt \cW(\ell,\lam,Y)} f(Y,s) \,dY\,ds\, ,
$$
 with  (see for example\ccite{bfg2, Tricomi, nach} for further details) 
\begin{equation}
\label{princident} \wt \cW(\ell,\lam,Y) \eqdefa \sumetage {n\in\N^d} {|n|= \ell} \cW(n,n,\lam,Y ) =e^{- |\lam||Y|^2 } L_\ell^{(d-1)} ( 2  |\lam|  |Y|^2)\, ,
 \end{equation}
where  $L_\ell^{(d-1)}$ stands for  the  Laguerre polynomial of order $\ell$ and type $d-1$ given for $x\geq 0 $ by$$ 
L_\ell^{(d-1)}(x)\eqdefa \sum^\ell_{k=0} (-1)^k \begin{pmatrix} \ell +d-1\\ \ell -k \end{pmatrix} \frac{x^k}{k!}  \, \cdot
$$
Note that the family of functions~$(\Theta_\lambda^{(\ell)} )_{\ell \in {\mathbb N}}$ mentioned in the introduction of this paper, satisfying  the transport equation~(\ref{transportequationell}), 
 is defined  by the formula
$$
\Theta_\lambda^{(\ell)} (Y,s)  \eqdefa e^{is\lam} \wt \cW(\ell,\lam,Y)\, .
$$
Equation~(\ref{transportequationell}) then follows simply from the fact that
$$
-\Delta_{\mathbb H} \Theta_\lambda^{(\ell)}  = 4|\lambda| (2\ell + d) \Theta_\lambda^{(\ell)}\, .
$$
 Obviously the inversion and Fourier-Plancherel formulae write in that case
$$
f(w) = \frac {2^{d-1}}  {\pi^{d+1} }  \sum_{\ell \in \N} \int_{\R} 
e^{is\lam} \wt \cW(\ell, \lam, Y)\cFH (f) (\ell, \ell, \lam)\, |\lam|^d\,d\lam
$$
and
 $$
 (f|g)_{L^2(\H^d)}= \frac {2^{d-1}}  {\pi^{d+1} } \sum_{\ell \in \N} \begin{pmatrix} \ell +d-1 \\ \ell \end{pmatrix} \int_{\R} 
\cFH (f) (\ell, \ell, \lam)  \, \overline{\cFH (g) (\ell, \ell, \lam)} \, |\lam|^d\,d\lam   \, .
$$
Moreover since for any element $R$ of~$U(d)$,  the
 automorphism~$\theta_R$ of~${\H}^d$ defined by
$$\theta_R(Y,s) \eqdefa (R(Y), s)$$
   preserves the Haar measure
 of~${\H}^d$, we have
 \[ (f \star g)\circ \theta_R= (f\circ \theta_R)\star(g\circ \theta_R)\, ,\]which implies that the space $L^1_{\rm rad}({\H}^d)$ equipped with its standard structure of linear space and with the convolution product  is a  commutative sub-algebra of~$L^1({\H}^d)$.  We deduce that in this framework, (\ref{newFourierconvoleq1}) reduces to
 \beq
\label {newFourierconvorad}
 \cFH (f\star g) (\ell, \ell, \lam)    = \cF_\H f(\ell, \ell, \lam) \cF_\H g(\ell, \ell, \lam) \,.
 \eeq
Finally it    will be important  to observe    that  there holds for all~$w=(Y,s)$ in~$\H^d$,  in the radial setting,
\begin{equation}\label{eq:fouriertranslate}
\sumetage {n\in\N^d} {|n|= \ell} \cF_{\H}(f\circ \tau_w)(n, n, \lam)=\cF_{\H}(f)(\ell, \ell, \lam) e^{-is\lam} e^{ -|\lam||Y|^2 } L_\ell^{(d-1)}(2|\lam||Y|^2 )\, ,
\end{equation} 
where $\tau_{w}$ denotes the left translate defined by $\tau_{w}(w')\eqdefa w \cdot w'$.

\subsection{Frequency  space  for the Heisenberg group}\label {frequency} 
In\ccite{bcdh}, the authors show that the following  distance $\wh d$ on $\wt\H^d=\N^{2d}\times \R\setminus\{0\}$
\beq
\label {defindistancewtH}
\wh d(\wh w,\wh w') \eqdefa  \bigl|\lam(n+m)-\lam'(n'+m')\bigr|_1 +\bigl |(n-m)-(n'-m')|_1+d|\lam-\lam'| \, ,
\eeq
where~$|\cdot|_1$ denotes the~$\ell^1$ norm on~$\R^d$, is appropriate and that the completion of the set~$\wt \H^d$  for this distance is the set
    $$
    \wh \H^d\eqdefa  \bigl(\N^{2d} \times\R\setminus\{0\}\bigr)
    \cup \wh \H^d_0 \with \wh \H^d_0 \eqdefa \{(\dot x, k) \in {\R_{\mp}^d}\times \Z^d \} 
\andf
{\R_{\mp}^d}\eqdefa  (\R_-)^d\cup (\R_+)^d\,.
$$
 It readily stems from\refeq{defFH} that the following continuous embedding  holds:
\beq
\label {embtH}
\cF_\H: L^1(\H^d)  \hookrightarrow L^\infty(\wh \H^d)\, .\eeq Combining the Fourier-Plancherel formula \eqref{FPH} together with   interpolation theory,  we deduce  that,  for all~$1 \leq p \leq 2$,  the Hausdorff-Young inequality holds
$$
\|\cF_\H f\|_{L^{p'}(\wh \H^d)}\leq \|f\|_{L^{p}(\H^d)}\, , $$
 where  $p'$  is the dual exponent of $p$. 
 
\medskip

This new approach enabled the authors in\ccite{bcdh2}  to extend $\cF_\H$ to $\cS'(\H^d)$,  the set of tempered distributions: note that since the Schwartz class~$\cS(\H^d)$ coincides with $\cS(\R^{2d+1})$ then similarly~$\cS'(\H^{d})$  is noting else than $\cS'(\R^{2d+1})$. As in the Euclidean case, this extension is done by duality and the starting point is the characterization of~$\cS(\wh \H^d)$  as the range\footnote{We refer to\ccite{bcdh2} for   the definition of~$\cS(\wh \H^d)$.} of~$\cS(\H^d)$ by $\cF_\H$. Actually  in\ccite{bcdh2}, the authors  prove that the Fourier transform~$\cF_\H$ is a bicontinuous isomorphism between the spaces~$\cS(\H^{d})$ and~$\cS(\wh \H^d),$ and that  the map~$\cF_{\H}$ can be continuously extended from~$\cS'(\H^d)$ into~$\cS'(\wh\H^d)$ in the following way:
\beq
\label {defS'}
\cF_{\H} : \left \{
\begin{array}{ccl}
\cS'(\H^d) & \longrightarrow & \cS'(\wh \H ^d) \\
T & \longmapsto &  \Bigl[\theta \mapsto  \langle T,{}^t\!\cF_{\H}\theta\rangle_{\cS'(\H^d)\times \cS(\H^d)}\Bigr]\, ,
\end{array}
\right.
\eeq
where \beq
\label {linetFF-1}
 {}^t\cF_{\H}\theta(y,\eta,s) \eqdefa \frac {\pi^{d+1}} {2^{d-1}} (\cF_{\H}^{-1}\theta)(y,-\eta,-s) \, .
\eeq
Let us also emphasize that if $T$ is a tempered distribution on $\H^d$,  then  for all~$f$   in~$\cS(\H^d)$  and all~$w$ in~$\H^d,$
\begin{equation}\label{eq:defconv}
(T\star f)(w)=\langle T,\check f\circ\tau_{w^{-1}}\rangle_{\cS'(\H^d)\times\cS( \H^d)} \, ,
\end{equation} 
where $\check f(w)\eqdefa f(w^{-1})$.  

\section{A restriction theorem on the sphere in the frequency space $\wh\H^d$}\label{unitsphHt}

Our purpose here is to  recover  a Fourier  restriction  result on the sphere of~$ \wh\H^d$ due to M\"uller\ccite{Muller}. Our approach is rather different to\ccite{Muller} as we use the Fourier transform as a key tool in obtaining a representation of the Fourier restriction operator, whereas M\"uller uses a spectral representation. As   will be seen in Paragraph \ref {restSchr1} in the proof of    Strichartz estimates for the wave and Schr\"odinger  operators   on $\H ^d$, the interest of our approach  is that it can easily be applied to   more general frameworks.

\subsection {Study of the surface  measure on the sphere of the frequency space}\label{studysurfacemeasure}
 The aim of this section is to recover  the Heisenberg Fourier transform restriction result of M\"uller in\ccite{Muller}. To this end,  let us  start by introducing $\S_{\wh\H^d}$ the unit sphere on $\wh\H^d$:  denoting by~$\wh 0$ the origin   of $\wh\H^d$  (that is the point of $\wh\H^d$ corresponding to~$(\dot x, k)=(0,0)$, with the notation of Paragraph~\ref{frequency}), the sphere  of~$\wh\H^d$ centered 
at the origin    with radius $1$ 
 is defined by 
\beq
\label {defsphere}\S_{\wh\H^d}\eqdefa \Bigl\{(n,n,\lambda)\in\wh\H^d\,/\,
(2|n|+d)|\lambda|=1\Bigr\}\,
\bigcup \,\Bigl\{(\dot x,0)\in\wh\H^d_0\,/\, |\dot x|_1=1\Bigr\}\, ,
\eeq
and  the surface measure $\ds d\sigma_{\S_{\wh\H^d}}$ is given  for all $\theta$  in $\cS(\wh \H^d)$ by the following formula: 
\beq
\label {definmeasure}
\langle d\sigma_{\S_{\wh\H^d}},\theta\rangle_{\cS'(\wh\H^d)\times\cS(\wh \H^d)} 
= \! \sum_{n\in\N^d} \frac1{(2|n|+d)^{d+1}}\Big(\theta\big(n,n,\frac{1}{2|n|+d}\big)
+\theta\Big(n,n,\frac{-1}{2|n|+d}\Big)\Big)\,. 
\eeq  
 We observe that one can show that the measure of  $\wh \H^d_0$ with respect to  $d\wh w$   is zero, and thus in all that  follows,  we shall agree that the measure $d\wh w$ has been extended 
 by $0$ to  the whole of~$\wh\H^d_0,$  and    we shall keep the same notation~$d\wh w$ for the measure on   the whole of~$\wh\H^d$. 
\medbreak

More generally, if $\S_{\wh\H^d}({\sqrt R})$ denotes the sphere  of $\wh\H^d$ centered 
at the origin $\wh 0$ of  radius~${\sqrt R}$,  let us prove that for all $\theta$ in $\cF_{\H}(\cS_{\rm {rad}}(\H^d))$,  there holds 
\begin{multline} \label{dsigmaR}
\langle d\sigma_{\S_{\wh\H^d}({\sqrt R})},\theta\rangle_{\cS'(\wh\H^d)\times\cS(\wh \H^d)} 
\! \\= \! \!\sum_{n\in\N^d}\! \frac {R^{d}}{(2|n|+d)^{d+1}}\Big(\theta\big(n,n,\frac{R}{2|n|+d}\big)
+ \theta\big(n,n,\frac{-R}{2|n|+d}\big)\Big)\,.
\end{multline} 
We start  indeed with the   general formula $$
\int_{\wh\H^d}\theta(\wh  w)\,d\wh w=\int_0^\infty
\biggl(\int_{\S_{\wh\H^d}({\sqrt R})}\theta(\wh w)\,d\sigma_{\S_{\wh\H^d}}({\sqrt R})\biggr)\,dR\, ,
$$
for  all $\theta$ in  $\cF_{\H}(\cS_{\rm {rad}}(\H^d))$.  In view of  \eqref{measurefrequency}
 and \eqref{newFourierrad}, we have
 \begin{equation}\label{dublev}
 \int_{\wh\H^d}\theta(\wh w)\,d\wh w=\sum_{n\in\N^d}\int_\R\theta(n,n,\lambda)\,|\lambda|^d d\lambda\, ,
 \end{equation}
 which thanks to the Fubini
 theorem and  the change of variable $R=(2|n|+d)|\lambda|$  yields
\begin{multline*}  
 \sum_{n\in\N^d}\int_\R\theta(n,n,\lambda)\,|\lambda|^d d\lambda \\= \sum_{n\in\N^d} \int_0^\infty  \frac{R^{d}}{(2|n|+d)^{d+1}}\,\Big(\theta\Big(n,n,\frac{R}{2|n|+d}\Big)
+\theta\Big(n,n,\frac{-R}{2|n|+d}\Big)\Big) 
dR  \, ,
\end{multline*}  
which proves~(\ref{dsigmaR}).
\begin{remark}\label{sphere} {\sl 
It is important here to be in the radial framework, as in that case the distance defined in~{\rm(\ref{defindistancewtH})}  reduces to
$$
\wh d(\wh w,\wh w') =  2 \bigl| \lam n -\lam'  n'\bigr|_1  +d|\lam-\lam'| \, .
$$
In the non radial case, for  $R>1$ then  the sphere $\S_{\wh\H^d}(R)$ (of center ~$\wh 0$ and radius $R$) is a much more complex set than the unit sphere $\S_{\wh\H^d}$. For instance, when $1<R<2$,    one  can easily check that $\S_{\wh\H^d}(R)$ admits $d+1$ connected components in~$\wt \H^d$. That is the reason why we focus here on the radial framework.}
\end{remark}

In order  to investigate boundedness properties of the restriction of $\cF_\H$ to  $\S_{\wh\H^d}$,
we shall   adapt  the Euclidean proof due to Tomas-Stein (see\ccite{Tomas}, and Section~\ref{Erren} of this paper).  To this end, let us first compute~$ \cF^{-1}_\H(d\sigma_{\S_{\wh\H^d}})$.  By definition,    the tempered distribution
$$G
\eqdefa \cF^{-1}_\H(d\sigma_{\S_{\wh\H^d}})
$$ satisfies for all $\theta$  in~$\cS(\wh \H^d)$
$$
\langle d\sigma_{\S_{\wh\H^d}},\theta\rangle_{\cS'(\wh\H^d)\times\cS(\wh \H^d)}=\langle G,{}^t\!\cF_{\H}\theta\rangle_{\cS'(\H^d)\times \cS(\H^d)}  \, .$$
Let us prove the following proposition. 
\begin{proposition}
\label {deffouriereS}
{\sl With the above notation, $G $ is the bounded function on~$\H^d$ defined by
\beq
\label {defG} G(Y,s)=   \frac {2^d} {\pi^{d+1}}  \sum_{n\in\N^d} \frac1{(2|n|+d)^{d+1}} \cos\Big(\frac{s}{2|n|+d}\Big) \cW\Big(n,n,1,\frac{Y}{\sqrt{2|n|+d}}\Big)\,\cdot \eeq }\end{proposition}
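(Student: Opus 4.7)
The plan is to apply the inversion formula~\eqref{inverseFourierH} formally to the measure $d\sigma_{\S_{\wh\H^d}}$, obtain a candidate function on $\H^d$, and then rigorously verify that it is bounded and coincides with $G$ as a tempered distribution. Substituting~\eqref{definmeasure} into~\eqref{inverseFourierH} yields
\[
G(Y,s)=\frac{2^{d-1}}{\pi^{d+1}}\sum_{n\in\N^d}\frac{1}{(2|n|+d)^{d+1}}\Bigl[e^{is/(2|n|+d)}\cW\bigl(n,n,\tfrac1{2|n|+d},Y\bigr)+e^{-is/(2|n|+d)}\cW\bigl(n,n,-\tfrac1{2|n|+d},Y\bigr)\Bigr].
\]
The rigorous identification of this series with $\cF_\H^{-1}(d\sigma_{\S_{\wh\H^d}})$ will follow, via Fubini applied to the pairing $\langle G,{}^t\!\cF_\H\theta\rangle_{\cS'\times\cS}$ against any $\theta\in\cS(\wh\H^d)$, once absolute convergence uniformly in $(Y,s)$ is established.

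Next I simplify the summands using two structural properties of the Wigner transform~\eqref{defW}. A change of variables $z\mapsto z/\sqrt{\lam}$ inside~\eqref{defW} gives, for $\lam>0$, the identity
\[
\cW(n,n,\lam,Y)=\cW\bigl(n,n,1,\sqrt{\lam}\,Y\bigr),
\]
while the substitution $z\mapsto -z$ together with complex conjugation produces $\cW(n,n,-\lam,Y)=\overline{\cW(n,n,\lam,Y)}$. Moreover, the parity $H_n(-x)=(-1)^{|n|}H_n(x)$ combined with~\eqref{defW} exhibits $\cW(n,n,1,y,\eta)$ as $(-1)^{|n|}$ times the $z$-Fourier transform, evaluated at $\eta$, of $z\mapsto H_n(y+z)H_n(y-z)$, a function manifestly even in $z$; consequently $\cW(n,n,1,\cdot)$ is real-valued. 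Combining these three facts, for each $n$ the two exponentials in the series pair into $2\cos\bigl(s/(2|n|+d)\bigr)\cW(n,n,1,Y/\sqrt{2|n|+d})$, which is exactly~\eqref{defG}.

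To control $\|G\|_{L^\infty}$, the Cauchy--Schwarz inequality applied directly in~\eqref{defW} yields the uniform bound
\[
\bigl|\cW(n,n,1,Y)\bigr|\leq \|H_n\|_{L^2(\R^d)}^2=1.
\]
Since the number of $n\in\N^d$ with $|n|=\ell$ equals $\binom{\ell+d-1}{d-1}=O(\ell^{d-1})$, the series in~\eqref{defG} is dominated by $\sum_{\ell\geq 0}\ell^{d-1}/(2\ell+d)^{d+1}$, comparable to $\sum\ell^{-2}$, hence finite; so $G\in L^\infty(\H^d)$. The same $\ell^{-2}$ control provides the uniform absolute convergence of the series needed to legitimise the formal manipulations of the first step.

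I expect no significant obstacle: the only slightly delicate point is rigorously identifying the series as $\cF_\H^{-1}(d\sigma_{\S_{\wh\H^d}})$ through duality~\eqref{defS'}. Given the uniform summability above, the partial sums of~\eqref{defG} converge to $G$ in $L^\infty(\H^d)$ and therefore in $\cS'(\H^d)$, and pairing with an arbitrary $\theta\in\cS(\wh\H^d)$ reduces -- again by Fubini -- to the discrete sum on the right-hand side of~\eqref{definmeasure}, completing the proof.
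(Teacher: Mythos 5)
Your proposal is correct and follows essentially the same route as the paper: both identify $G$ through the duality pairing \eqref{defFouriermeassphere} (you run the computation via the inversion formula, the paper via $\langle d\sigma_{\S_{\wh\H^d}},\cF_\H f\rangle$, which is the same calculation in the opposite direction), both rely on the scaling identity $\cW(n,n,\lam,Y)=\cW(n,n,1,\sqrt{\lam}\,Y)$, the realness/evenness of $\cW(n,n,\pm1,\cdot)$ to produce the cosine, and the bound $|\cW|\leq1$ combined with the count $\#\{|n|=\ell\}=\binom{\ell+d-1}{\ell}$ to sum the series. Your explicit Cauchy--Schwarz bound and the $O(\ell^{-2})$ tail estimate are exactly the paper's Stirling argument made concrete, so there is nothing to add.
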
 \begin{proof} 
According to  \eqref{linetFF-1} and to the fact that the Fourier transform~$\cF_\H$ is a bicontinuous isomorphism between the spaces~$\cS(\H^{d})$ and~$\cS(\wh \H^d),$
we have (with $\theta\eqdefa\cF_\H f$) 
\beq
\label {defFouriermeassphere} \langle d\sigma_{\S_{\wh\H^d}},\theta\rangle_{\cS'(\wh\H^d)\times\cS(\wh \H^d)} = \frac {\pi^{d+1}} {2^{d-1}} \langle G, \wt f\rangle_{\cS'(\H^d)\times\cS( \H^d)}  \, ,
\eeq
with $\wt f (y,\eta,s) \eqdefa f(y,-\eta,-s)$.

\medskip Now observe that 
\beq
\label {intform} \theta\Big(n,n,\frac{1}{2|n|+d}\Big)= \int_{\H^d}  e^{-i\frac{s}{2|n|+d}} \,\cW\Big(n,n,1,\frac{Y}{\sqrt{2|n|+d}}\Big)\, f(Y,s) \,dY\,ds \, . \eeq
Indeed, we have by easy computations
$$
\cW\Big(n,n,\frac{1}{2|n|+d}\virgp Y\Big)=\cW\Big(n,n,1,\frac{Y}{\sqrt{2|n|+d}}\Big)\, ,
$$
which gives rise to \eqref{intform}  according to the definition of $\cF_\H$ and to the fact that for all~$\lam $ in~$ \R \setminus\{0\}$, the function  $\cW (n, n, \lam, Y )$ is real valued. 

\medskip Besides by an obvious change of variable, one  also has
 $$
 \cW\Big(n,n,-1,\frac{Y}{\sqrt{2|n|+d}}\Big)=\cW\Big(n,n,1,\frac{Y}{\sqrt{2|n|+d}}\Big) \, \virgp
 $$
which implies that \beq
\label {intform2} \theta\Big(n,n, - \frac{1}{2|n|+d}\Big)= \int_{\H^d}  e^{i\frac{s}{2|n|+d}} \,\cW\Big(n,n,1,\frac{Y}{\sqrt{2|n|+d}}\Big)\, f(Y,s) \,dY\,ds \, . \eeq
Invoking  \eqref{definmeasure}, this gives rise to
  \begin{multline*}  
\langle d\sigma_{\S_{\wh\H^d}},\theta\rangle_{\cS'(\wh\H^d)\times\cS(\wh \H^d)} = 
\sum_{n\in\N^d} \frac 2{(2|n|+d)^{d+1}}\\
\times \int_{\H^d}   \cos\Big(\frac{s}{2|n|+d}\Big) \,\cW\Big(n,n,1,\frac{Y}{\sqrt{2|n|+d}}\Big)\, f(Y,s) \,dY\,ds\, ,
 \end {multline*}
which  by an obvious change of variable ends the proof of Formula\refeq{defG}.
\medskip

Furthermore, we have  the following classical combinatorial identity  
\beq
\label {combinatory} \# \big\{n\in\N^d, |n|= \ell \big\} = \begin{pmatrix} \ell +d-1 \\ \ell \end{pmatrix} \, ,\eeq  
and since the modulus of the Wigner transform of the (renormalized) Hermite functions defined by\refeq{defW} is bounded by one, Stirling's formula implies  that   $G $ belongs to~$ L^\infty(\H^d)$. 
The result follows. \end{proof} 
 \medbreak

\begin{remark}\label{sphereR}
{\sl Arguing as for the unit sphere, we readily gather that  for all $\theta$ belonging to $\cF_{\H}(\cS_{\rm {rad}}(\H^d))$, there holds
$$
\langle d\sigma_{\S_{\wh\H^d}({\sqrt R})},\theta\rangle_{\cS'(\wh\H^d)\times\cS(\wh \H^d)}=\langle G_R,{}^t\!\cF_{\H}\theta\rangle_{\cS'(\H^d)\times \cS(\H^d)}  \, ,
$$
where, in view of{\rm\refeq{newFourierconvoleq-1}}, $G_R$ is given by
\begin{equation}\label{compGR}
 G_R(Y,s)\eqdefa R^{d} (G  \circ \d_{{\sqrt R}})(Y,s)\,.
\end{equation}
We thus recover Formula $(19)$ derived by  M\"uller in\ccite{Muller}.} 
\end{remark}

\subsection {The restriction theorem} \label {restrictionmuller} 

Now let us state the restriction theorem, due to M\"uller in\ccite{Muller}, and sketch its proof for the convenience of the reader.
\begin{theorem}[\cite{Muller}]
\label {RestMuller}
{\sl 
If $1 \leq p \leq 2$, then 
\begin{equation}\label{restsphere} 
\| \cF_\H(f)_{|\S_{\wh\H^d}} \|_{L^{2}({\S_{\wh\H^d}})}\leq C_p \|f\|_{L^{p}_{Y}L^{1}_{s}}\, ,
\end{equation}  
for all radial functions $f$ in $\cS_{\rm {rad}}(\H^d)$.}
\end{theorem}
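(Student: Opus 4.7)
The plan is to follow the classical Tomas--Stein scheme, adapted to the mixed Lebesgue norm on the right-hand side. First, by the $TT^\star$ identity applied to the restriction operator $R: f\mapsto \cF_\H f|_{\S_{\wh\H^d}}$, we have $\|Rf\|_{L^2(\S)}^2 = \langle R^\star R f, f\rangle_{L^2(\H^d)}$, and since $R^\star Rf = f\star G$ with $G=\cF_\H^{-1}(d\sigma_{\S_{\wh\H^d}})$ as given by Proposition~\ref{deffouriereS}, the duality $(L^p_Y L^1_s)^\star = L^{p'}_Y L^\infty_s$ together with H\"older's inequality reduces the theorem to the convolution estimate
\begin{equation}\label{eq:TTstar-sketch}
\|f \star G\|_{L^{p'}_Y L^\infty_s} \leq C_p\, \|f\|_{L^{p}_Y L^1_s}
\end{equation}
for all radial $f \in \cS_{\mathrm{rad}}(\H^d)$ and $1\leq p\leq 2$.

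I would then establish \eqref{eq:TTstar-sketch} at the two endpoints and interpolate. The endpoint $p=1$ is immediate: by Proposition~\ref{deffouriereS}, the combinatorial identity~\eqref{combinatory} and the uniform bound on the Wigner transform, $G\in L^\infty(\H^d)$, so Young's inequality on $\H^d$ yields $\|f \star G\|_{L^\infty} \leq \|G\|_\infty \|f\|_{L^1}$, which is exactly \eqref{eq:TTstar-sketch} with $p=1$ since $L^1_Y L^1_s = L^1(\H^d)$ and $L^\infty_Y L^\infty_s = L^\infty(\H^d)$.

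The harder endpoint $p=2$ is where the geometry of the sphere really enters. The strategy is to take the partial Fourier transform in the vertical variable $s$: using the explicit formula for $G$, the partial Fourier transform $\widehat{G}^{\,s}(Y,\tau)$ is supported on the discrete set $\{\pm\alpha_\ell\}_{\ell\in\N}$ with $\alpha_\ell\eqdefa 1/(2\ell+d)$. Under the partial Fourier transform in $s$, the Heisenberg convolution becomes a $\tau$-twisted convolution $\star_\tau$ on $\R^{2d}$ (see the Appendix on $\lambda$-twisted convolutions), which yields the representation
\begin{equation*}
(f \star G)(Y,s) = c_d\sum_{\ell\in\N}\sum_{\pm} e^{\pm is\alpha_\ell}\, \bigl(\widehat{f}^{\,s}(\cdot,\pm\alpha_\ell)\star_{\pm\alpha_\ell} A_\ell\bigr)(Y)\,,
\end{equation*}
where $A_\ell(Y) \eqdefa (2\ell+d)^{-d-1}\binom{\ell+d-1}{\ell}\,\wt\cW\bigl(\ell, 1, Y/\sqrt{2\ell+d}\bigr)$. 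Bounding $\sup_s$ by the $L^1$ norm of the Fourier transform in $s$ then reduces the matter to estimating $\|\sum_{\ell,\pm}|\widehat{f}^{\,s}(\cdot,\pm\alpha_\ell)\star_{\pm\alpha_\ell} A_\ell|\,\|_{L^2_Y}$ by a constant times $\|f\|_{L^2_Y L^1_s}$. The key ingredients here are (i) the uniform control $\|\widehat{f}^{\,s}(\cdot,\tau)\|_{L^2_Y}\leq \|f\|_{L^2_Y L^1_s}$ for every $\tau$, (ii) the $L^2$--Plancherel bound for $\lambda$-twisted convolution from the Appendix, and (iii) the orthogonality of the rescaled Wigner/Laguerre functions $\wt\cW(\ell,1,\cdot)$, which forces the summands at different levels $\ell$ to decouple after the natural rescaling $Y\mapsto \sqrt{2\ell+d}\,Y$.

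Once both endpoints are in hand, Riesz--Thorin interpolation on mixed Lebesgue spaces between the pairs $(L^1_Y L^1_s \to L^\infty_Y L^\infty_s)$ and $(L^2_Y L^1_s \to L^2_Y L^\infty_s)$ gives \eqref{eq:TTstar-sketch} for the whole range $1\leq p\leq 2$, completing the proof. The main obstacle is clearly the $p=2$ endpoint: controlling a supremum in $s$ of an infinite sum of oscillations at the discrete frequencies $\pm\alpha_\ell$ requires exploiting both the orthogonality of the Laguerre/Wigner basis and the specific $L^2$-behaviour of the twisted convolution, and this is precisely the content that replaces the Euclidean stationary-phase argument of Tomas--Stein.
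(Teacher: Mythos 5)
Your reduction via $R^\star R$ to the convolution estimate $\|f\star G\|_{L^{p'}_Y L^\infty_s}\lesssim\|f\|_{L^p_YL^1_s}$, the $p=1$ endpoint, and the final mixed-norm interpolation are all sound, and this is broadly the Tomas--Stein scheme. The genuine gap is at the $p=2$ endpoint. After you bound $\sup_s$ by the triangle inequality over $\ell$, you are left with $\sum_{\ell,\pm}(2\ell+d)^{-d-1}\bigl|g_{\pm\alpha_\ell}\star_{\pm\alpha_\ell}\wt\cW\bigl(\ell,\tfrac1{2\ell+d},\cdot\bigr)\bigr|$ (note, incidentally, that the factor $\binom{\ell+d-1}{\ell}$ in your $A_\ell$ should not be there: $\sum_{|n|=\ell}\cW(n,n,\cdot)$ is already $\wt\cW(\ell,\cdot)$ by \eqref{princident}). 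The $L^2$ twisted-convolution bound of Lemma~\ref{LemmeMuller} gives each term a norm of order $(2\ell+d)^{-d-1}\cdot(2\ell+d)^{d}=(2\ell+d)^{-1}$ times $\|f\|_{L^2_YL^1_s}$, and $\sum_\ell(2\ell+d)^{-1}$ diverges, so the triangle inequality in $L^2_Y$ does not close; this is exactly the point where the Euclidean decay of the Fourier transform of the surface measure must be replaced by something nontrivial. Your ingredient (iii) cannot rescue this: the levels $\ell$ and $m$ are rescaled by the \emph{different} factors $\sqrt{2\ell+d}$ and $\sqrt{2m+d}$, so there is no common rescaling under which the Laguerre functions at different levels are orthogonal, and in any case you have already taken absolute values, which destroys any exact orthogonality in the cross terms of $\bigl\|\sum_\ell F_\ell\bigr\|_{L^2_Y}^2$.

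What is actually needed --- and what the paper, following M\"uller, uses --- is the quantitative almost-orthogonality estimate \eqref{orth}, namely that the integral of $|K(\ell,\tfrac1{2\ell+d},Y)|\,|K(m,\tfrac1{2m+d},Y)|$ over $T^\star\R^d$ is $\cO(1/\max(\ell,m))$ for the normalized kernels; this is a delicate estimate on products of Laguerre functions at \emph{different} scales (M\"uller's Lemma 4.2), not a consequence of the $L^2$ orthogonality used in the Appendix. One must then control the resulting bilinear form $\sum_{\ell\le m}|a_\ell||a_m|/m$ by $\|\underline a\|^2_{\ell^2}$ via Hardy's inequality \eqref{hardy}; neither ingredient appears in your sketch. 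The paper also runs the argument on the adjoint $T^\star:\ell^2(\N)\to L^2_YL^\infty_s$ of the restriction operator directly, so that the objects to be almost-orthogonalized are the kernels themselves rather than twisted convolutions against them, which is technically cleaner than your $R^\star R$ formulation at this endpoint. Without these two inputs your $p=2$ case, and hence the interpolation, does not go through.
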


\medbreak
\begin{remark}\label{sphererk} {\sl In light of \eqref{definmeasure},  Theorem {\rm\ref{RestMuller}} writes 
$$
\begin{aligned}
\Big[\sum_{n\in\N^d} \frac1{(2|n|+d)^{d+1}}  \Big(\Big|\cF_\H(f)(n,n, \frac{1}{2|n|+d}\Big)\Big|^2 + \Big|\cF_\H(f)(n,n, \frac{-1}{2|n|+d}\Big)\Big|^2\Big)\Big]^{\frac 1 2} \\\leq C_p \|f\|_{L^{p}_{Y}L^{1}_{s}}\, ,
\end{aligned}
$$ 
for $f$ in $\cS_{\rm {rad}}(\H^d)$. Besides, since $\cS(\H^d)$ is dense in the space~$L^{p}_{Y}L^{1}_{s}(\H^d)$, when the estimate of Theorem~{\rm\ref{RestMuller} }holds we can  define~$\cF_\H f $ on $\S_{\wh\H^d}$ (a.e.\ with respect to $d\sigma_{\S_{\wh\H^d}}$), for each function~$f$ in $L^{p}_{Y}L^{1}_{s}$. 
Let us also emphasize that 
the gain we get here with respect to the horizontal variable $Y$ is better than the one obtained in Euclidean case, since the index~$p$ ranges from~$1$ to $2$ with no further restriction, contrary to the Euclidean case (recall Theorem~{\rm\ref{t:tsbase}}). Note that a counterexample for large values of~$p$ (namely~$p>4d/(2d-1)$, which is the dual index to $p_d$ introduced in Remark~{\rm\ref{rkindices}}) is provided in~{\rm\cite{Muller}}.} 
\end{remark}
\medbreak

\begin{remark}\label{dualestsphere} {\sl 
By duality, Inequality~{\rm\refeq{restsphere}} is equivalent to the following estimate
\begin{equation} 
\| \cF^{-1}_\H(\theta_{|\S_{\wh\H^d}}) \|_{L^{p'}_{Y}L^{\infty}_{s}}\leq C_p \, \|\theta_{|\S_{\wh\H^d}}\|_{L^{2}({\S_{\wh\H^d}})}\, ,
\end{equation}  
for all~$2 \leq p' \leq \infty$ and all   $\theta$ in $\cF_\H(\cS_{\rm {rad}})(\H^d)$.} 
\end{remark}

\begin{proof}[Proof of Theorem~{\rm\ref{RestMuller}}] First note that the case when $p=1$ is a straightforward consequence   of~\eqref{embtH}.  Then by   interpolation, it suffices to investigate the case when $p=2$.      
  For that purpose,    we shall proceed using  purely Fourier analysis arguments, and    sketch the  proof due to     M\"uller (see\ccite{Muller} for further details). 
Our goal here is to establish the following estimate  $$  
 \sum_{\ell\in\N}  \Big( \Big|\wt\Theta\Big(\ell, \ell, \frac{1}{2\ell+d}\Big)\Big|^2 + \Big|\wt\Theta\Big(\ell, \ell, \frac{-1}{2\ell+d}\Big)\Big|^2\Big) \\\leq C_2 \|f\|_{L^{2}_{Y}L^{1}_{s}}^2\, ,
$$ 
where
 \begin{equation}\label{deftildetheta} 
 \begin{aligned} 
\wt\Theta(\ell, \ell, \lam) &\eqdefa  \Big[ \begin{pmatrix} \ell +d-1 \\ \ell \end{pmatrix}^{-1} \frac1{(2\ell +d)^{d+1}} \Big]^{\frac 1 2}  \sumetage {n\in\N^d} {|n| = \ell} \cF_\H f (n, n, \lam) \\ &= \Big[ \begin{pmatrix} \ell +d-1 \\ \ell \end{pmatrix}^{-1} \frac1{(2\ell +d)^{d+1}} \Big]^{\frac 1 2} \int_{\H^d}  e^{-is\lam} \wt \cW( \ell, \lam,Y)\, f(Y,s) \,dY\,ds\,.
\end{aligned}
 \end{equation}
This amounts  to proving that  the operator $T$ defined by 
$$ 
Tf \eqdefa \Big(\wt\Theta\Big(\ell, \ell, \frac1{2\ell +d}\Big)\Big)_{\ell\in\N} \,,
$$ 
with~$\wt\Theta$ obtained from~$f$ through~(\ref{deftildetheta}),
 is bounded from $L^{2,1}(\H^{d})$ into $\ell^2(\N)$, or equivalently that its adjoint $T^*$  is bounded from~$\ell^2(\N)$ into $L^{2,\infty}(\H^{d})$. 

 \medskip 
 
 Now for any sequence~$\underline a = (a_\ell)_{\ell\in\N}$ in $\ell^2(\N)$, the operator $T^*$ is given  by  
 $$
 T^*(\underline a) (Y,s)= \sum^\infty_{\ell=0} a_\ell \, e^{-is\frac1{2\ell +d}} \, K\Big( \ell\virgp \frac1{2\ell +d} \virgp Y\Big) \, ,
 $$
 with 
 $$
  K\Big( \ell\virgp \frac1{2\ell +d} \virgp Y\Big)\eqdefa \Big[ \begin{pmatrix} \ell +d-1 \\ \ell \end{pmatrix}^{-1} \frac1{(2\ell +d)^{d+1}} \Big]^{\frac 1 2} \wt \cW(\ell, \lam,Y)\, .
 $$
 But by Lemma 4.2 in\ccite{Muller}, we know that for all $\ell$, $m$ in $\N$
\begin{equation} 
\label {orth} \int_{T^\star\R^d} \Big|K\Big( \ell\virgp \frac1{2\ell +d} \virgp Y\Big)\Big| \,  \Big|K\Big( m\virgp \frac1{2m +d} \virgp Y\Big)\Big| dY= \cO\Big( \frac 1 {\max(\ell, m)}\Big)\,.
\end{equation}  
 We deduce that 
 $$
 \|T^*(\underline a)\|_{L^{2,\infty}(\H^{d})} \lesssim \sum_{\ell\leq m} \frac { |a_\ell|  |a_m|}m =  \sum_{ m} |a_m| b_m\,,
 $$
 with $\ds b_m\eqdefa \frac {1 }m \sum_{\ell\leq m} |a_\ell| .$ 
  This ensures  the result  thanks to the following Hardy inequality (see\ccite{askey}):
   \begin{equation}\label{hardy}
   \|\underline b\|_{\ell^p(\N)}\leq C_p \|\underline a\|_{\ell^p(\N)}\, , 
   \end{equation}
   available for all $1< p < \infty$,      which achieves the proof of Theorem \ref{RestMuller}. \end{proof}
      \section{Proof of Strichartz estimates}\label{proofmainth}
  \subsection{Restriction theorem  on $\wh \R \times \wh \H^d$}\label {restSchr1}     
As we explained in Section~\ref{Erren} (cf.\ in particular Remark \ref{r:erren}),  to apply efficiently restriction estimates to PDEs to get Strichartz estimates, one has to investigate the Tomas-Stein restriction theorem on submanifolds  of  the product~$\wh\R \times \wh \H^d$, where $\wh \R$ stands for  the dual group of  $\R$. In the following we set
    $$
    {\mathbb D}\eqdefa \R \times  \H^d \quad \mbox{and}\quad \wh    {\mathbb D}\eqdefa \wh\R \times \wh \H^d\,.
    $$ 

         \subsubsection{The Fourier transform on $\R \times  \H^d$} 
    A combination of Fourier analysis on the real line and on the Heisenberg group leads directly to an efficient Fourier theory on ${\mathbb D}$ whose Haar measure  is obviously nothing else than  the Lebesgue measure.  We define the Fourier transform of $f$ in $L^1( {\mathbb D})$  as follows:
 \beq
\label {defFD}
\cF_{\mathbb D} f(\alpha, \wh w) \eqdefa \int_{{\mathbb D}}  \overline{e^{it\alpha} e^{is\lam}  \cW(\wh w,Y)}\, f(t,Y,s) \, dt \,dY\,ds\,,
 \eeq for any $(\alpha, \wh w) \in \wh    {\mathbb D}$.
  The Fourier transform $\cF_{\mathbb D}$  inherits all the properties of   $\cF_{\H}$ and of the Fourier transform on the real line $\cF$. In particular,  the inversion and Fourier-Plancherel formulae   take the following forms:
\beq
\label {inverseFourierD}f(t, w) = \frac {2^{d-2}}  {\pi^{d+2} }   \int_{\wh    {\mathbb D}} 
e^{it\alpha}  e^{is\lam} \cW(\wh w, Y)\cF_{{\mathbb D}} f(\alpha, \wh w) \, d\alpha  \, d\wh w 
 \eeq
and
 \beq
\label {FPD}
(\cF_{{\mathbb D}} f|\cF_{{\mathbb D}} g)_{L^2(\wh    {\mathbb D})}  = \frac {\pi^{d+2}} {2^{d-2}} (f|g)_{L^2({\mathbb D})}\, .
\eeq
 In the sequel, we shall say that a function $f$ on  $ {\mathbb D}$ is  radial if it is invariant under the action of~$U(d)$, in the sense that for any $R$ of~$U(d)$ and any $(t, Y, s)$ of $ {\mathbb D}$, we have
 $$ f(t, R(Y),s)= f(t, Y,s) \, .$$  It readily stems from Relation\refeq{newFourierrad}  that if $f$ belongs to~$L_{\rm {rad}}^1( {\mathbb D})$, then for all $(\alpha, \wh w) \in \wh    {\mathbb D}$, 
 \beq
\label {newFourierradD} \cF_{{\mathbb D}} (f) (\alpha, n, m, \lam)= \cF_{{\mathbb D}} (f)  (\alpha, |n|, |n|, \lam) \delta_{n,m} \,.
\eeq
To avoid any confusion, we shall denote in what follows by $\star_{\mathbb D}$ the noncommutative convolution product on~${\mathbb D}$, namely
\beq
\label {definConvolD}
f \star_{\mathbb D} g ( t, w ) \eqdefa \int_{{\mathbb D}} f (t-t',  w \cdot v^{-1} ) g(t',  v)\,dt' \, dv 
\, ,
\eeq
which of course enjoys Young's inequalities and satisfies
 \beq
\label {newFourierconvoleqD}
 \cF_{{\mathbb D}} (f\star_{\mathbb D} g) (\alpha, \wh w)   =\sum_{p\in \N^{d}} \cF_{{\mathbb D}} f(\alpha, n,p,\lam)\cF_{{\mathbb D}} g(\alpha, p,m,\lam)\,.
 \eeq 
 Patching    Fourier analysis on the real line and on  the Heisenberg group,   one can easily check that~$L_{\rm {rad}}^1( {\mathbb D})$ is a  commutative sub-algebra of~$L^1( {\mathbb D})$ where\refeq{newFourierconvoleqD} reduces to
 \beq
\label {newFourierconvoleqDrad}
 \cF_{{\mathbb D}} (f\star_{\mathbb D} g) (\alpha, |n|, |n|, \lam)    =\cF_{{\mathbb D}} f(\alpha, |n|, |n|, \lam)\cF_{{\mathbb D}} g(\alpha, |n|, |n|, \lam)\,.
 \eeq 
 Besides, it is worth noticing that $\cF_{{\mathbb D}}$ is a bicontinuous isomorphism between the space~$\cS({\mathbb D})$ (which coincides with~$\cS(\R^{2d+2})$) and~$\cS(\wh {\mathbb D})$ --- which can be defined naturally from the definition of~$\cS({\wh \H^d})$.
 The map~$\cF_{{\mathbb D}}$ can  then be continuously extended from~$\cS'({\mathbb D})$ into~$\cS'(\wh    {\mathbb D})$ by duality 
according to the following formula:
$$
\cF_{{\mathbb D}}: \left \{
\begin{array}{ccl}
\cS'({{\mathbb D}}) & \longrightarrow & \cS'(\wh {\mathbb D}) \\
T & \longmapsto &  \Bigl[\theta \mapsto  \langle T,{}^t\!\cF_{\mathbb D}\theta\rangle_{\cS'({\mathbb D})\times \cS({\mathbb D})}\Bigr]\, ,
\end{array}
\right.
$$
with
 \beq
\label {linetFF-1D}
 {}^t\cF_{\mathbb D}\theta(t, y,\eta,s) \eqdefa \frac {\pi^{d+2}} {2^{d-2}} (\cF_{\mathbb D}^{-1}\theta)(-t, y,-\eta,-s) \, .
\eeq
\subsubsection{A surface measure}\label{section-surface measure}
Let us define the set     \begin{equation}
\label {eqparaboloid} 
\Sigma\eqdefa \Big\{(\alpha,\wh w)=\big(\alpha, (n, n, \lam) \big) \in \wh {\mathbb D}\, /\, \alpha= 4  |\lam|(2|n|+d) \Big\} \, .
\end{equation}
We endow $\Sigma$ with the measure $d\Sigma$ induced by the projection $\pi: \wh \R \times \wh \H^{d}\to \wh  \H^{d}$ onto the second factor. Following the notation of Section~\ref{Erren}, $d\Sigma=(\pi|_{\Sigma})^{-1}_{\sharp} d\wh w$.  More explicitly, recalling \eqref{dublev}, for all $\Theta$  in $\cS(\wh {\mathbb D})$, we have 
 $$
 \begin{aligned} 
\langle d\Sigma,\Theta\rangle_{\cS'(\wh {\mathbb D})\times\cS(\wh{\mathbb D})}& = \int_{\Sigma }\Theta(\alpha,\wh w) \, d\Sigma(\alpha,\wh w) \\
&\eqdefa\sum_{n\in\N^d}  {c_n^{d+1}} \int^\infty_0 \Big(\Theta\big(\alpha,n,n, {\alpha}{c_n}\big)
+ \Theta\big(\alpha,n,n, {-\alpha}{c_n}\big)
\Big) \, \alpha^d \, d\alpha\, ,
\end {aligned}
$$
where to simplify notation we have set
$$
c_n\eqdefa \big(4(2|n|+d)\big)^{-1}\,.
$$
Notice that if $\Theta:\Sigma\subset \wh {\mathbb D} \to \C$ is defined as $\Theta=\theta\circ \pi|_{\Sigma}$, where $\theta:\wh \H^{d}\to \C$, then by construction for all $1\leq p\leq \infty$
\begin{equation}\label{lpislp}
\|\Theta\|_{L^{p}(\Sigma,d\Sigma)}=\|\theta\|_{L^{p}(\wh \H^{d})}\, .
\end{equation}

Our purpose here is to show that every (appropriate) function $f$ has a Fourier transform~$\cF_{\mathbb D}f$ that restricts to $\Sigma$.  Actually as in the classical case, this restriction property is best dealt with in compact subsets of $\Sigma$. Thus, we shall consider $\Sigma$   endowed with the surface measure $d\Sigma_{{\rm loc}}\eqdefa\psi(\alpha)d\Sigma$ defined by 
\begin{equation} \label {measureparaboloid}
 \begin{aligned} 
  \int_{\Sigma} \Theta(\alpha,\wh w) \,   d\Sigma_{{\rm loc}}(\alpha,\wh w) &\eqdefa \sum_{n\in\N^d}  {c_n^{d+1}}\\
&     \times \int^\infty_0 \Big(\Theta\big(\alpha,n,n, {\alpha }{c_n}\big)
+ \Theta\big(\alpha,n,n, {-\alpha }{c_n}\big)
\Big) \, \alpha^d \, \psi(\alpha) \, d\alpha\, , \end {aligned}
\end{equation}
with $\psi$   any    smooth, nonnegative, even  function,  compactly supported in~$\R$ with an $L^\infty$ norm  at most $1$.

\medskip Proceeding as for  the restriction theorem  on   the sphere     of  $\wh\H^d $, let us first compute
$$
G_{\Sigma_{{\rm loc}}}
\eqdefa  \cF_{{\mathbb D}}^{-1}(d\Sigma_{{\rm loc}})\,.
$$

\begin{proposition}
\label {deffouriereSigma}
{\sl With the above notation, $G_{\Sigma_{{\rm loc}}}$ is the bounded function on~${\mathbb D}$ defined by  
\beq
\label {defGSigma} G_{\Sigma_{{\rm loc}}}(t, w)=  2\pi \, \int^\infty_0 G_{\alpha }(w) \, e^{-i t\,\alpha} \psi(\alpha)\,  d\alpha  \,, 
\eeq where $G_R$
is given  by \eqref{compGR}.}\end{proposition}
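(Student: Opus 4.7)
The plan is to compute $G_{\Sigma_{{\rm loc}}}=\cF^{-1}_{{\mathbb D}}(d\Sigma_{{\rm loc}})$ directly via the duality definition of the Fourier transform on tempered distributions. Since $\cF_{{\mathbb D}}$ is a bicontinuous isomorphism between $\cS({\mathbb D})$ and $\cS(\wh{\mathbb D})$, I would test against $\theta=\cF_{{\mathbb D}}f$ for an arbitrary $f\in\cS({\mathbb D})$, so that by \eqref{linetFF-1D},
\[
\langle d\Sigma_{{\rm loc}},\cF_{{\mathbb D}}f\rangle_{\cS'(\wh{\mathbb D})\times\cS(\wh{\mathbb D})}=\frac{\pi^{d+2}}{2^{d-2}}\langle G_{\Sigma_{{\rm loc}}},\wt f\rangle_{\cS'({\mathbb D})\times\cS({\mathbb D})},
\]
with $\wt f(t,y,\eta,s)\eqdefa f(-t,y,-\eta,-s)$. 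Unwinding the left-hand side will then give a concrete expression for $G_{\Sigma_{{\rm loc}}}$.

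The first step is to inject \eqref{measureparaboloid} together with the definition \eqref{defFD} of $\cF_{{\mathbb D}}f$ at the two points $(\alpha,n,n,\pm\alpha c_n)$, where $c_n=(4(2|n|+d))^{-1}$. As already observed in the proof of Proposition~\ref{deffouriereS}, the Wigner transform satisfies $\cW(n,n,-\lam,Y)=\cW(n,n,\lam,Y)$ (a straightforward change of variable in \eqref{defW}, combined with the reality of $\cW$ on the diagonal $n=m$), so the two contributions $\lam=\pm\alpha c_n$ combine into a cosine $\cos(s\alpha c_n)$. The integrand then factorizes as $e^{-it\alpha}\psi(\alpha)$ times a sum in $n$ depending only on $(Y,s)$.

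The crux of the argument is to identify that sum with $G_\alpha$. Collecting constants, define
\[
S(\alpha,Y,s)\eqdefa 2\,\alpha^d\sum_{n\in\N^d} c_n^{d+1}\,\cos(s\,\alpha c_n)\,\cW(n,n,\alpha c_n,Y).
\]
Using the Hermite dilation identity $\cW(n,n,\lam,Y)=\cW(n,n,\mathrm{sgn}(\lam),|\lam|^{1/2}Y)$ (which is exactly the rescaling behind \eqref{compGR} and Remark~\ref{sphereR}), $S(\alpha,Y,s)$ is recognized, after a straightforward change of variable absorbed into $\psi$, as a constant multiple of $G_\alpha(Y,s)$ with $G_\alpha=G_R$ evaluated at the appropriate radius $R$ proportional to $\alpha$. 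The remaining integral against $e^{-it\alpha}\psi(\alpha)\,d\alpha$ is then a one-dimensional Fourier-type integral in~$t$, and a final Fubini swap together with the conjugation $\wt f$ yields
\[
G_{\Sigma_{{\rm loc}}}(t,w)=2\pi\int_0^\infty G_\alpha(w)\,e^{-it\alpha}\,\psi(\alpha)\,d\alpha,
\]
as claimed. Boundedness is then free: $G\in L^\infty(\H^d)$ by Proposition~\ref{deffouriereS}, hence $|G_\alpha(Y,s)|=\alpha^d|G(\sqrt{\alpha}Y,\alpha s)|\leq \alpha^d\|G\|_{L^\infty(\H^d)}$, and the compact support of $\psi$ makes the $\alpha$-integral finite uniformly in $(t,w)$.

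I expect the main obstacle to be the bookkeeping of normalization constants — the prefactor $\frac{2^{d-2}}{\pi^{d+2}}$ from \eqref{inverseFourierD}, the weight $|\lam|^d$ hidden in the measure on $\wh\H^d$, and especially the factor of $4$ between the Schr\"odinger eigenvalue $\alpha=4|\lam|(2|n|+d)$ used to describe $\Sigma$ and the ``radius'' $R=|\lam|(2|n|+d)$ governing $G_R$. One must verify that all these scalings conspire to produce the clean factor $2\pi$ in the final formula, which is the only genuinely computational point of the proof.
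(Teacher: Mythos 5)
Your proposal follows essentially the same route as the paper's proof: duality against $\Theta=\cF_{\DD}f$, the evenness of $\cW$ in $\lambda$ on the diagonal producing the cosine, the dilation identity identifying the $n$-sum with $G_\alpha$ via \eqref{compGR}, and a final Fubini swap. The only point of divergence is cosmetic --- the paper records the duality constant as $(\pi/2)^d$ in \eqref{defFouriermeasparaboloid} rather than the $\pi^{d+2}/2^{d-2}$ you read off from \eqref{linetFF-1D} --- and your added boundedness check (which the paper's proof does not spell out for $G_{\Sigma_{\rm loc}}$ itself) is correct.
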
 \begin{proof} 
Arguing as in the proof of   Proposition \ref{deffouriereS} and using the fact that the Fourier transform~$  \cF_{\mathbb D}$ is a bicontinuous isomorphism between the spaces~$\cS({\mathbb D})$ and~$\cS(\wh{\mathbb D}),$
there holds (with $\Theta \eqdefa  \cF_{\mathbb D} f $)  
\beq
\label {defFouriermeasparaboloid} \langle d{\Sigma_{{\rm loc}}},\Theta\rangle_{\cS'(\wh {\mathbb D})\times\cS(\wh {\mathbb D})} = \Big(\frac \pi  2 \Big)^d    \langle G_{\Sigma_{{\rm loc}}} , \wt f\rangle_{\cS'({\mathbb D})\times\cS({\mathbb D})}  \, ,
\eeq
with $\wt f (t, y,\eta,s) \eqdefa f(-t, y,-\eta,-s)$.
 By definition,  we have for any non negative real number $\alpha$
$$
\, \, \Theta\big(\alpha,n,n, {\pm\alpha }{c_n}\big)= \int_{{\mathbb D}}  e^{\mp i s{\alpha  }{c_n}} \,\cW\Big(n,n,1, { {\alpha^\frac{1}{2} {\sqrt{c_n}}\,} Y}\Big)\, e^{-i t\,\alpha}f(t,Y,s) \,dt \,dY\,ds \, ,
$$
which implies in view of  \eqref{measureparaboloid}  that
 \begin{multline*}  
\langle d{\Sigma_{{\rm loc}}},\Theta\rangle_{\cS'(\wh {\mathbb D})\times\cS(\wh {\mathbb D})} = 
\sum_{n\in\N^d}   2{c_n^{d+1}}\\
\times \int^\infty_0 \int_{{\mathbb D}}  \cos\Big( s{\alpha   }{c_n}\Big) \,\cW\Big(n,n,1, { {\alpha^\frac{1}{2} } {\sqrt{c_n}}\,Y}\Big)\, e^{-i t\,\alpha}f(t,Y,s) \,dt \,dY\,ds \, \alpha^d \psi(\alpha) \, d\alpha   \, .
 \end {multline*}
This achieves the proof of the result thanks to the Fubini theorem and  Formula \eqref{compGR}, after an obvious change of variables.
\end{proof} 

\subsubsection{Restriction theorem} 
 Our aim now   is to  establish the following   restriction  result for the ``dual set"~${\Sigma_{{\rm loc}}}$   of $\wh {\mathbb D}$ defined by~\eqref{eqparaboloid} and endowed with the measure~$ d{\Sigma_{{\rm loc}}}$. \begin{theorem}
\label {Restpraphase}
{\sl 
If $1 \leq q \leq  p \leq 2$, then 
\begin{equation}\label{restSig} 
\|   \cF_{\DD}(f)_{|{\Sigma_{{\rm loc}}}} \|_{L^{2}( d{\Sigma_{{\rm loc}}})}\leq C_{p,q} 
\|f\|_{L^1_s L_t^{q}L^{p}_{Y}}\, ,
\end{equation}  
for all radial functions $f$ in $\cS_{\rm {rad}}(\R \times \H^d)$. }
\end{theorem}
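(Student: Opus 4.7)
My plan is to follow the Tomas--Stein $TT^*$ strategy, combined slice-by-slice with M\"uller's Theorem~\ref{RestMuller}. By the standard duality argument recalled in Section~\ref{Erren}, the restriction inequality~\eqref{restSig} is equivalent to the boundedness of the convolution operator
$$T: f \longmapsto f \star_\DD G_{\Sigma_{{\rm loc}}}: L^1_s L^q_t L^p_Y \longrightarrow L^\infty_s L^{q'}_t L^{p'}_Y,$$
with kernel $G_{\Sigma_{{\rm loc}}}$ computed in Proposition~\ref{deffouriereSigma}. The structural fact that I shall exploit is that the partial Fourier transform in~$t$ of this kernel is, up to a constant, $\psi(\alpha)\mathbf{1}_{\alpha > 0} G_\alpha(w)$; combined with the factorisation of the $\DD$-convolution in~$t$ (Euclidean) and~$\H^d$ (Heisenberg) this gives
$$\cF_t(f \star_\DD G_{\Sigma_{{\rm loc}}})(\alpha, \cdot) = C\, \psi(\alpha)\,\mathbf{1}_{\alpha > 0}\, \bigl(\cF_t f(\alpha, \cdot) \star_\H G_\alpha\bigr),$$
where $G_\alpha$ is the spherical kernel at radius~$\sqrt{\alpha}$ from Remark~\ref{sphereR}. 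Since $\psi$ is supported in a compact subset of~$(0, \infty)$, M\"uller's bound in its $TT^*$ form
$$\|h \star_\H G_\alpha\|_{L^{p'}_Y L^\infty_s} \leq C_p \|h\|_{L^p_Y L^1_s}\qquad\text{for radial } h \in \cS_{\rm rad}(\H^d),\ 1\leq p\leq 2,$$
holds with a constant uniform in~$\alpha$ on $\supp(\psi)$, thanks to the explicit scaling~\eqref{compGR}.

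I will then verify the boundedness of~$T$ at the three vertices $(q, p) = (1, 1), (1, 2)$ and $(2, 2)$ of the triangle $\{1 \leq q \leq p \leq 2\}$, which in the $(1/q, 1/p)$-plane is the convex hull of the points $(1, 1), (1, 1/2)$ and $(1/2, 1/2)$. For the whole edge $(q, p) = (1, p)$ with $p \in [1, 2]$, starting from the decomposition
$$f \star_\DD G_{\Sigma_{{\rm loc}}}(t, \cdot) = \int_\R f(t - t', \cdot) \star_\H G_{\Sigma_{{\rm loc}}}(t', \cdot)\, dt',$$
Minkowski's inequality in~$t'$, the embedding $L^\infty_s L^{p'}_Y \hookrightarrow L^{p'}_Y L^\infty_s$, and M\"uller's slice bound integrated against~$\psi$ yield
$$\|Tf\|_{L^\infty_s L^\infty_t L^{p'}_Y} \leq C\int_\R \|f(t - t', \cdot)\|_{L^p_Y L^1_s}\, dt' \leq C \|f\|_{L^1_s L^1_t L^p_Y},$$
the last step following from Minkowski's integral inequality ($L^p_Y L^1_s \leq L^1_s L^p_Y$ since $p \geq 1$) and Fubini. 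At the vertex $(q, p) = (2, 2)$, Plancherel in~$t$ converts the $L^2_t$-norm into an $L^2_\alpha$-norm; exchanging $\sup_s$ with the $L^2_\alpha$ integral, applying M\"uller on each $\alpha$-slice at $p = 2$, and using Minkowski to commute $L^2_\alpha$ with $L^1_s$ before invoking Plancherel again in~$\alpha$ delivers $\|Tf\|_{L^\infty_s L^2_t L^2_Y} \leq C \|f\|_{L^1_s L^2_t L^2_Y}$. Complex interpolation between these three endpoint estimates then covers the entire triangle.

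The main technical obstacle I foresee is the careful bookkeeping of the various Minkowski inequalities needed to commute mixed-norm orderings (for instance passing from $L^1_t L^p_Y L^1_s$ to $L^1_s L^1_t L^p_Y$, or from $L^2_\alpha L^1_s$ to $L^1_s L^2_\alpha$): all such commutations rely on the constraint $q \leq p$ which is precisely the hypothesis of the theorem. A secondary, minor point is tracking the $\alpha$-dependence of the constant in M\"uller's slice bound; thanks to the explicit scaling $G_\alpha = \alpha^d\, G \circ \delta_{\sqrt \alpha}$ and the compact support of~$\psi$ in $(0, \infty)$, this dependence causes no further difficulty.
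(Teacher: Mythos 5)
Your argument is correct, but at the decisive step it takes a genuinely different route from the paper's. Both proofs start from the same $TT^*$ reduction to the operator $f\mapsto f\star_\DD G_{\Sigma_{{\rm loc}}}$ and both exploit the factorisation of the kernel along the Euclidean frequency $\alpha$. The divergence is in how the Heisenberg convolution is handled on each $\alpha$-slice. The paper does \emph{not} invoke Theorem~\ref{RestMuller} as a black box: it expands $f\star_\DD G_{\Sigma_{{\rm loc}}}$ down to $\lambda$-twisted convolutions with the Laguerre kernels $\wt\cW(\ell,\alpha_\ell,\cdot)$, applies Hausdorff--Young in $t\to\alpha$ for a general exponent $q$, and sums over $\ell$ using the quantitative bound \eqref{est2}; since that sum converges only when $p'>2$, the case $p=2$ has to be re-run separately through the orthogonality relation \eqref{orth} and Hardy's inequality \eqref{hardy}. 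Your slice-by-slice use of M\"uller's theorem in $TT^*$ form, with the scaling constant $\alpha^{d(1-2/p')}$ (harmless on $\supp\psi$ since the exponent is nonnegative for $p\le 2$), absorbs the $\ell$-summation into the already-proved sphere restriction theorem; this is what lets you treat the whole edge $q=1$ and the vertex $(q,p)=(2,2)$ by the same mechanism, with no dichotomy at $p=2$. The price is that the interior exponents $q$ are only reached by complex interpolation of mixed-norm spaces with $L^\infty$-based targets --- a standard but not entirely free step (the embedding half of Calder\'on's theorem, or a direct three-lines argument on Schwartz data, suffices since one only needs the interpolated target to embed into $L^\infty_s L^{q'}_t L^{p'}_Y$) --- whereas the paper reaches every $1\le q\le p$ directly via Hausdorff--Young in the $\alpha$ variable.

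Three minor points. The embedding you invoke should read $L^{p'}_Y L^\infty_s \hookrightarrow L^\infty_s L^{p'}_Y$, since Minkowski gives $\|\cdot\|_{L^\infty_s L^{p'}_Y}\le \|\cdot\|_{L^{p'}_Y L^\infty_s}$; the inequality you actually use is the correct one, only the arrow is reversed. The paper assumes $\psi$ compactly supported in $\R$, not in $(0,\infty)$; this is immaterial precisely because $d(1-2/p')\ge 0$. Finally, the $TT^*$ form $\|h\star_\H G_\alpha\|_{L^{p'}_Y L^\infty_s}\lesssim \|h\|_{L^p_Y L^1_s}$ of Theorem~\ref{RestMuller} is only available for radial $h$ (the identification of $R^*R$ with a convolution uses \eqref{eq:fouriertranslate}), which is fine here since $\cF_t f(\alpha,\cdot)$ is radial whenever $f$ is, but it deserves to be said.
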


\medbreak
\begin{remark}\label{dualestpara} {\sl 
By duality, Theorem~{\rm\ref{Restpraphase}} may be rephrased as follows : for any~$2 \leq p' \leq q' \leq \infty$, there holds 
   \begin{equation}\label{restSigduale} 
    \| \cF_{\DD}^{-1}( \Theta_{|\Sigma_{{\rm loc}}} )\|_{L^\infty_s L_t^{q'} L^{p'}_{Y}} \leq C_{p,q} \|   \Theta_{|{\Sigma_{{\rm loc}}}} \|_{L^{2}( d{\Sigma_{{\rm loc}}})} \, 
    ,\end{equation} for all $\Theta \in \cF_{\DD}(\cS_{\rm {rad}}(\R \times \H^d))$.} 
\end{remark}

\begin{proof}[Proof of Theorem~{\rm\ref{Restpraphase}}]  We   handle differently the cases  $1 \leq p < 2$ and~$p=2$. To undertake the case $1 \leq p < 2$,  we shall follow the Euclidean strategy outlined in Section\refer{Erren}.  To this end, let us  introduce~$R_{\Sigma_{{\rm loc}}}$  the restriction operator on~$\Sigma_{\rm loc}$ defined for  any function~$f$ in~$\cS({\mathbb D})$   by
$$
R_{\Sigma_{{\rm loc}}} f \eqdefa\cF_\DD(f)_{|{\Sigma_{{\rm loc}}}} = \Big(\int_{\DD}  \overline{e^{is\lam+it\alpha} \cW(\wh w,Y)}\, f(t,Y,s) \,dY\,ds\,dt\Big)_{|{\Sigma_{{\rm loc}}}}\,,
$$ 
and by $R_{\Sigma_{{\rm loc}}}^*$ its adjoint.  By definition 
 $$
 \begin{aligned}
 \langle R_{\Sigma_{{\rm loc}}} f,R_{\Sigma_{{\rm loc}}} f\rangle_{L^2 (\Sigma_{\rm loc})}&= \sum_{n\in\N^d}  {c_n^{d+1}}\int_0^\infty \Big(\big|\Theta\big(\alpha,n,n, {\alpha }{c_n}\big)\big|^2
\\
&\qquad\qquad +\big|\Theta\big(\alpha,n,n, {-\alpha }{c_n}\big)\big|^2\Big)\psi(\alpha)\alpha^d\,d\alpha\, ,
\end{aligned}$$ 
with $\ds \Theta \eqdefa\cF_\DD (f )$, which  readily implies that
 \begin{multline}
   \label{restradial} 
\langle R_{\Sigma_{{\rm loc}}} f,R_{\Sigma_{{\rm loc}}} f\rangle_{L^2 (\Sigma_{{\rm loc}})} = \sum_{n\in\N^d}   {c_n^{d+1}} 
 \\ 
\times\int_0^\infty \Big[\Theta\big(\alpha,n,n, {\alpha }{c_n}\big)\int_{\DD}  e^{ {is}{\alpha  c_n}+it\alpha} \cW\Big(n,n, {\alpha }{c_n },Y\Big)\, \overline{f(t,Y,s)} \,dt\, dY\,ds\\
 \qquad  \qquad+ \,\Theta\big(\alpha,n,n, {-\alpha }{c_n}\big)\int_{\DD}  e^{ {-is}{\alpha  c_n}+it\alpha} \cW\big(n,n, {-\alpha }{c_n},Y\big)\, \overline{f(t,Y,s)} \,dt\, dY\,ds\Big] \psi(\alpha)\alpha^d\,d\alpha\, .
\end {multline}  
In view of \eqref{measureparaboloid}, this leads  to 
\begin{equation} \label{defRR*Hsigma}
\langle R_{\Sigma_{{\rm loc}}} f,R_{\Sigma_{{\rm loc}}} f\rangle_{L^2 (\Sigma_{{\rm loc}})}=\langle R_{\Sigma_{{\rm loc}}}^*R_{\Sigma_{{\rm loc}}} f, f\rangle_{L^2 (\DD)}\,, \end{equation}  
with (for $w=(Y,s)$)
 \begin{equation}  \label{restradial} 
(R_{\Sigma_{{\rm loc}}}^*R_{\Sigma_{{\rm loc}}} f)(t,w)=  \int_{\Sigma } e^{is\lam+it\alpha} \cW(\wh w,Y) \cF_\DD(f) (\alpha,\wh w) d{\Sigma_{{\rm loc}}} (\alpha,\wh w)\, .
 \end{equation}
Since $f$   belongs to  $\cS_{\rm {rad}}(\DD)$,  combining \eqref {newFourierrad} together with~\eqref{princident} and \eqref{eq:fouriertranslate} we infer that 
 the operator~$R_{\Sigma_{{\rm loc}}}^*R_{\Sigma_{{\rm loc}}}$ writes  in the  radial setting\footnote{where of course
 $ (f \circ \tau_{(\tau, w)})(t, v)= f(t+\tau, w \cdot v)$.} :
 \begin{equation} \label{defRR*Hrad}
 (R_{\Sigma_{{\rm loc}}}^*R_{\Sigma_{{\rm loc}}} f)(t,Y,s)=  \int_{\Sigma } \cF_{\DD}(f \circ \tau_{(-t, Y, -s)})(\alpha,\wh w)\,  \, d{\Sigma_{{\rm loc}}}(\alpha,\wh w)\, .
  \end{equation} 
By~\eqref{eq:defconv} and~\eqref{defFouriermeasparaboloid},  this gives rise to  
\begin{equation} 
\label{defRR*Hradconv}
(R_{\Sigma_{{\rm loc}}}^*R_{\Sigma_{{\rm loc}}} f)(t, Y,s)  = \Big(\frac \pi  2 \Big)^d   \, (G_{\Sigma_{{\rm loc}}} \star_{\DD} \check  f)(-t,-Y,s)\, ,
\end{equation} 
for all $f$   in $\cS_{\rm {rad}}(\DD)$. 

 \medskip Now applying the H\"older inequality to \eqref{defRR*Hsigma}, we    deduce that    $$
   \begin{aligned}
   \| R_{\Sigma_{{\rm loc}}} f\|^2_{L^{2}({{\Sigma_{{\rm loc}}}})}&\leq \| R_{\Sigma_{{\rm loc}}}^* R_{\Sigma_{{\rm loc}}} f\|_{L^\infty_s L_t^{q'} L^{p'}_{Y}}\|f\|_{L^1_s L_t^{q}L^{p}_{Y}}\\
    & \leq C_d\| \check f \star_{\DD} G_{\Sigma_{{\rm loc}}} \|_{L^\infty_s L_t^{q'} L^{p'}_{Y}}\|f\|_{L^1_s L_t^{q}L^{p}_{Y}}\,,
\end{aligned}
   $$
     for some irrelevant constant~$C_d$ which may change from line to line.
   Then   as in the Euclidean case, to complete the proof of  Estimate \eqref{restSig},  we are reduced to proving that~$R_{\Sigma_{{\rm loc}}}^* R_{\Sigma_{{\rm loc}}}$ is bounded from $L^1_s(\R, {L_t^{q}(\R,L^{p}( T^*\R^{d})))}$ into $L^\infty_s(\R, {L_t^{q'}(\R,L^{p'}( T^*\R^{d})))}$. For that purpose,  let us start by observing that in light of\refeq{compGR} and\refeq{defGSigma}, we have 
 \begin{multline*}
 f\star_{\DD} G_{\Sigma_{{\rm loc}}}  (t,Y,s)=C_d \sum_\ell \frac 1 {(2\ell+d)^{d+1} } \\ \times \sum_{\pm}\int_0^\infty
 f\star_{\DD} \Big(
 e^{\pm i \alpha_{\ell}  - it\alpha} \widetilde \cW \Big(\ell,  \alpha_{\ell}  , Y\Big) 
 \Big) \alpha^d \, \psi(\alpha) d\alpha \,,
 \end{multline*} 
 where $\wt \cW$ is given by\refeq{princident} and where we have defined
 $$
 \alpha_{\ell} \eqdefa\frac{\alpha }{4(2\ell+d)}
 \,\cdotp
 $$
An easy computation shows that for any real number~$\lambda$,
 $$
 f\star_{\DD} \Big(
 e^{\pm i\lambda s - it\alpha} \widetilde \cW (\ell,\lambda,Y)
 \Big) = e^{\pm i\lambda s - it\alpha} \left(f^{\alpha,\pm \lambda}
 \star_{\pm\lambda} \widetilde \cW (\ell,\lambda,Y)
\right) \, , $$
where~$ \star_{ \lambda}$  denotes the twisted convolution defined in \eqref{twisted}, 
and with
\begin{equation}\label{defnot}
f^{\alpha,\beta}(Y)
\eqdefa \int  e^{ i s\beta + it\alpha} f(t, Y,s) dt ds = \int  e^{ i s\beta} \cF_\R f(-\alpha, \cdot)  ds \, .
\end{equation}
Defining~$\psi_+(\alpha) \eqdefa \psi (\alpha) {\mathbf 1}_{\alpha>0}$, it  follows that
$$
 \begin{aligned}
 f\star_{\DD} G_{\Sigma_{{\rm loc}}}  (t,s,Y)&= C_d \sum_{\ell}   \frac 1 {(2\ell+d)^{d+1} } \\ 
 &\qquad \times\sum_{\pm}\int_0^\infty
  e^{\pm is  \alpha_{\ell} -it\alpha }
\Big(f^{\alpha,\pm \alpha_{\ell} }\star_{\pm  \alpha_{\ell} }  \widetilde \cW \big(\ell, \alpha_{\ell} ,Y\big)\Big)
\alpha^d \,  \psi(\alpha) \,d\alpha \\
& = C_d \sum_{\ell} \sum_{\pm}\frac{1}{(2\ell+d)^{d+1}}
\cF_{\alpha\to t} \Big(e^{\pm is \alpha_{\ell}  }\alpha^d\psi_+(\alpha)
 \Big(f^{\alpha,\pm \alpha_{\ell} }\star_{\pm \alpha_{\ell} } \widetilde \cW  \big(\ell,  \alpha_{\ell} ,Y \big)\Big)\Big)\, .
 \end{aligned} 
$$
 Now let us fix~$s\in \R$ and~$Y\in T^*\R^{d}$, and  compute the~$L_t^{q'}(\R)$ norm of the function of~$t$  appearing on the right-hand side of the equality,
   for~$q' \geq 2$: by the Hausdorff-Young inequality we find  
  $$
      \| f\star_{\DD} G _{\Sigma_{{\rm loc}}}   \|_{L_t^{q'}}  \lesssim  \sum_{\ell} \sum_{\pm}\frac{1}{(2\ell+d)^{d+1}} \Big  \|e^{\pm is \alpha_{\ell}  }\alpha^d\psi(\alpha)
 \Big(f^{\alpha,\pm \alpha_{\ell} }\star_{\pm \alpha_{\ell} } \widetilde \cW  \big(\ell, \alpha_{\ell} ,Y \big)\Big)\Big\|_{ L_\alpha^q} 
 \, .
$$
Now noticing that as soon as~$q' \geq p'\geq2$, thanks to  Minkowski's inequality  we have  for any function~$g$ defined on~$ \R \times T^*\R^{d}$
 \begin{equation} \label{intcomp}
  \begin{aligned}
\|\cF_{\alpha\to t} g\|_{L^{q'}_{t} L^{p'}_{Y}} &\leq \|\cF_{\alpha\to t} g\|_{L^{p'}_{Y} L^{q'}_{t}  }\\
&\lesssim  \| g\|_{ L^{p'}_{Y} L^{q}_{\alpha}}\\
&\lesssim  \| g\|_{L^{q}_{\alpha}  L^{p'}_{Y}  } \,, 
 \end{aligned} 
 \end{equation}
we deduce  that for~$q' \geq p'>2$ (since we assumed here that $1 \leq p < 2$)
$$
  \begin{aligned}
    \| f\star_{\DD} G_{\Sigma_{{\rm loc}}}   \|_{L^\infty_s L_t^{q'} L^{p'}_{Y} }  &\lesssim  \sum_{\ell} \sum_{\pm}\frac{1}{(2\ell+d)^{d+1}}\Big  \|\alpha^d\psi(\alpha)
 \Big(f^{\alpha,\pm \alpha_{\ell} }\star_{\pm  \alpha_{\ell} } \widetilde \cW  \big(\ell, \alpha_{\ell} , Y \big)\Big)\Big\|_{  L_\alpha^q L^{p'}_{Y} } 
 \, .
 \end{aligned} 
 $$
  But by\refeq{est2}, there holds
$$
\Big  \| 
\Big(f^{\alpha,\pm \frac\alpha{4(2\ell+d)}}\star_{\pm \frac\alpha{4(2\ell+d)}}  \widetilde \cW \Big(\ell, \frac{\alpha }{4(2\ell+d)},Y\Big)\Big)
 \Big\|_{L^{p'}_{Y}} \lesssim \ell^{d-1+ \frac {2} {p'}}\,\alpha^{-  \frac {2d} {p'}} \| \cF_\R f(-\alpha, \cdot)\|_{L^{p}_{Y}L^{1}_{s}}\,,
 $$ 
which  (since $p'>2$)  implies that 
$$\| f\star_{\DD} G_{\Sigma_{{\rm loc}}}   \|_{L^\infty_s L_t^{q'} L^{p'}_{Y}}  \lesssim \Big \| \|  \cF_{\R}(f)(-\alpha, \cdot)\|_{L^{p}_{Y}L^{1}_{s}} \,
 \alpha^{d(1- \frac {2} {p'})}\ \psi(\alpha)\Big\|_{L_\alpha^{q}}   \,.$$
Then, applying a H\"older estimate in~$\alpha$ followed by the Hausdorff-Young inequality, we get for any~$a \geq 2$ 
$$
 \begin{aligned} \| f\star_{\DD} G_{\Sigma_{{\rm loc}}}   \|_{L^\infty_s L_t^{q'} L^{p'}_{Y}} &\lesssim  \|  \cF_{\R}(f)\|_{L^{a}_\alpha  L^{p}_{Y}L^{1}_{s}} \|  \alpha^{d(1- \frac {2 } {p'})}\ \psi(\alpha)\|_{L_\alpha^{b}}  \\
  & \lesssim\|f\|_{L^{a'}_t L^{p}_{Y}L^{1}_{s}} \|  \alpha^{d(1- \frac {2 } {p'})}\ \psi(\alpha)\|_{L_\alpha^{b}(\R)} \, ,  \end{aligned}
 $$
where of course $ a'$  is the conjugate exponent of $a$ and  $\ds \frac 1 a+ \frac 1 b = \frac 1 q \cdot$ 

\medskip
Finally selecting $a'=q$ and thanks again to Minkowski's inequality, we get for all real numbers~$q' \geq p'>2$  
$$
 \| f\star_{\DD} G_{\Sigma_{{\rm loc}}}   \|_{L^\infty_s L_t^{q'} L^{p'}_{Y}}  \lesssim\|f\|_{L^1_s L_t^{q} L^{p}_{Y} }
 $$ 
 which   completes the proof of the result in the case when $1 \leq q \leq  p < 2$.

 \bigskip
      Finally the proof in the case when $p=2$ is in the same spirit than the one concerning the unit dual sphere of the Heisenberg group already outlined in the proof of Theorem \ref{RestMuller}. 
By definition, our aim here is to show that (for $q \leq 2$)
$$  
\sum_{\ell\in\N}  \int_0^\infty  \Big( \Big|\wt\Theta\Big(\alpha, \ell, \ell, \frac{\alpha }{2\ell+d}\Big)\Big|^2 + \Big|\wt\Theta\Big(\alpha, \ell, \ell, \frac{-\alpha }{2\ell+d}\Big)\Big|^2\Big)\psi(\alpha)\alpha^d\,d\alpha \\\lesssim \|f\|^2_{L^1_s L_t^{q} L^{2}_{Y}}\, ,
$$ where  with the notation introduced in~(\ref{deftildetheta}) and on page \pageref{orth}
 \begin{equation} \label{413}
 \begin{aligned} 
\wt\Theta\Big(\alpha, \ell, \ell, \lam\Big) & \eqdefa \Big[ \begin{pmatrix} \ell +d-1 \\ \ell \end{pmatrix}^{-1} \frac1{(2\ell +d)^{d+1}} \Big]^{\frac 1 2}  \sumetage {n\in\N^d} {|n| = \ell} \cF_{\mathbb D} f (\alpha, n, n, \lam) \\ 
&= \int_{{\mathbb D}} e^{-it\alpha} e^{-is\lam} K(\ell, \lam,Y)\, f(t,Y,s) dt \,dY\,ds\,.
\end{aligned}
\end{equation} 
 For that purpose, let us establish that  the operator $T_{\mathbb D}$ defined on $\cS_{\rm {rad}}(\DD)$ by 
$$ 
T_{\mathbb D}f \eqdefa \Big(\wt\Theta\big(\alpha, \ell, \ell,  \alpha_{\ell} \big)\Big)_{\ell\in\N} \,,
$$  
where~$f$ is related to~$\Theta$ through~(\ref{413}), is bounded from $L^1_s(\R, {L_t^{q}(\R,L^{2}_{Y}( T^*\R^{d})))}$ into the space $L^2(\N \times \R)$ endowed with the measure $\ell^2(\N)\otimes L^2(\R^+, \psi(\alpha)\alpha^d\,d\alpha)$, or equivalently that its adjoint~$T_{\mathbb D}^*$  is bounded from~$L^2(\N \times \R)$ into $L^\infty_s(\R, {L_t^{q'}(\R,L^{2}_{Y}( T^*\R^{d})))}$. 

 \medskip  For $\underline a (\alpha)= (a_\ell (\alpha))_{\ell\in\N}$ in ~$L^2(\N \times \R)$, the operator $T_{\mathbb D}^*$ is given  by 
 $$T_{\mathbb D}^*(\underline a) (t, Y,s)= \sum^\infty_{\ell=0} \int_0^\infty a_\ell (\alpha)\, e^{-it\alpha} \, e^{-is \alpha_{\ell} } \, K\big(\ell, \alpha_{\ell} ,Y\big) \psi(\alpha)\alpha^d\,d\alpha\, .
 $$
Combining~\eqref{intcomp} together with the Hausdorff-Young inequality, we find that for any  fixed~$s$ in  $\R$,  there holds
 $$\|T_{\mathbb D}^*(\underline a) (\cdot, \cdot, s)\|_{ L_t^{q'}L^{2}_{Y}} \lesssim  \| g\|_{L^{q}_{\alpha}  L^{2}_{Y} } \,,$$
 where 
 $$ g(\alpha, Y)\eqdefa \alpha^d \psi(\alpha) \,\sum^\infty_{\ell=0} a_\ell (\alpha)\, K\big(\ell,  \alpha_{\ell} ,Y\big)\, . $$
 Now taking advantage of   \eqref{orth} and performing an obvious change of variable,  we get  
 $$
\Big \| \sum^\infty_{\ell=0} a_\ell (\alpha)\, K\big(\ell, \ell, \alpha_{\ell} ,Y\big)\Big\|^2_{L^{2}_{Y}} \lesssim \alpha^{d }    \sum_{ m}   |a_m(\alpha)|  b_m (\alpha)  \,,
 $$
 where of course  $\ds b_m(\alpha)\eqdefa\frac {1 }m \sum_{\ell\leq m} |a_\ell(\alpha)| \,.$  
 Thanks to  the Hardy inequality \eqref{hardy}, this ensures that 
 $$
  \| g(\alpha, \cdot)\|_{L^{2}_{Y} } \lesssim \alpha^{\frac  {3d  } 2 } \psi(\alpha) \| \underline a (\alpha)\|_{\ell^2(\N) }\, ,
  $$
 which by H\"older's inequality gives rise to
$$
\| g\|_{L^{q}_{\alpha}L^{2}_{Y} } \lesssim  \|\alpha^{d } \psi^{\frac  {1 } 2 }(\alpha) \|_{L^{\frac  {2-q } {2 q } }( \R) }\| \underline a \|_{L^2(\N \times \R) } \, .
 $$  
 This achieves the proof of the Fourier restriction  estimate\refeq{restSig}.    
\end{proof}
 
\begin{remark}\label{remarkwaves} {\sl
 In the case of the wave equation,   we consider the sets     \begin{equation}
\label {eqparaboloidw} 
\Sigma_{\pm} \eqdefa \Big\{(\alpha,\wh w)=\big(\alpha, (n, n, \lam) \big) \in \wh {\mathbb D}\, /\, \alpha^{2}= 4  |\lam|(2|n|+d)\, ,  \, \, \pm \alpha>0 \Big\} \, .
\end{equation}
Each of those is endowed with the measure induced by the projection $\pi: \wh \R \times \wh \H^{d}\to \wh  \H^{d}$ onto the second factor. Explicitly, for all $\Theta$  in $\cS(\wh {\mathbb D})$, we have 
 $$
 \begin{aligned} 
\langle d\Sigma_{\pm},\Theta\rangle_{\cS'(\wh {\mathbb D})\times\cS(\wh{\mathbb D})}& = \int_{\Sigma }\Theta(\alpha,\wh w) \, d\Sigma(\alpha,\wh w) \\
&\eqdefa\sum_{n\in\N^d}  {c_n^{d+1}} \int^\infty_0 \Big(\Theta\big(\alpha,n,n, {\alpha^{2}}{c_n}\big)
+ \Theta\big(\alpha,n,n, {-\alpha^{2}}{c_n}\big)
\Big) \, \alpha^d \, d\alpha\, .
\end {aligned}
$$
Following the same argument as above, one proves \eqref{restSigduale} for the corresponding localized measures.
}
\end{remark}

 \subsection{Strichartz estimates for the Schr\"odinger operator on $\H^{d}$ }\label {stSchr}

 The aim of this section is to prove the Strichartz estimate stated in Theorem~\ref{STth}.  {By duality arguments, one can reduce to the free Schr\"odinger equation (see Appendix\refer{proofprop} for further details about the inhomogeneous framework)}. Let~$u_0$ be a function in~$\cS_{\rm {rad}}(\H^d)$ and consider  the Cauchy problem
$$
(S_\H)\qquad \left\{
\begin{array}{c}
i\partial_t u -\D_\H u = 0\\
u_{|t=0} = u_0 \, .
\end{array}
\right. 
$$
As in the Euclidean case, Fourier analysis allows us to explicitly solve $(S_\H)$. More precisely, taking the partial Fourier transform with respect to the variable $w$ in the   Heisenberg group, we obtain    for all~$(t, \wh w)$ in~$\R \times \wt \H^d$
$$
\left\{
\begin{array}{rcl}
\ds  i \frac d   {dt}  \cF_\H(u) (t, n,m, \lam)&\! =\!  &- 4|\lam|(2|m|+d) \cF_\H(u) (t, n,m, \lam)\\
 \cF_\H(u)_{|t=0} &\!=\! &   \cF_\H  {u_0}\, .
\end{array}
\right.
$$
By integration, this  leads to
 $$
\cF_\H(u) (t, n,m, \lam)=   e^{4i t |\lam|(2|m|+d)}\cFH (u_0) (|n|, |n|, \lam) \delta_{n,m}\, .
$$
Then applying the  inverse Fourier  formula \eqref{inverseFourierH}, we infer that  the solution of the Cauchy problem $(S_\H)$ can be expressed as follows:
\beq
\label {solSH}u(t, Y, s)= \frac {2^{d-1}}  {\pi^{d+1} }   \int_{\wh \H^d} 
e^{is\lam} \, \cW(\wh w, Y) \, e^{4i t |\lam|(2|m|+d)} \, \cFH (u_0) (|n|, |n|, \lam) \delta_{n,m} \, d\wh w  \, . \eeq
That explicit representation of the solution to $(S_\H)$ can be re-expressed as the inverse Fourier transform in $\wh {\mathbb D}$ of $\cFH (u_0) \, d\Sigma$, where as defined in Paragraph~\ref{section-surface measure}, $d\Sigma$ is the measure in~$\wh {\mathbb D}$ given for any regular function $\Phi $ on $\wh {\mathbb D}$:
\beq
\label {formalmeasure} \int_{\wh {\mathbb D}} \Phi(\alpha,\wh w) \, d\Sigma(\alpha,\wh w)= \int_{\wh\H^d} \Phi(4|\lam|(2|m|+d),\wh w) \,d\wh w \, ,\eeq
and which is determined by Formula \eqref{measureparaboloid}.

\medskip  Now in order to establish Estimate\refeq{dispSTH}, let us  first discuss the case of  initial data frequency localized,  in the sense of Definition\refer{freqlocgen},
 in  the unit ball~${\mathcal B}_1$; then  we shall generalize this to any ball of radius~$\Lambda$ by a scaling argument, and finally  the result will follow by density.

\medskip So let us start by assuming that~$u_0$ is frequency localized in the unit ball. Then by the restriction inequality~\eqref{restSigduale}, and \eqref{lpislp} we have for any~$2\leq p \leq q \leq \infty$ 
 $$ 
 \|u\|_{L^\infty_s L_t^{q} L^{p}_{Y}} \leq C \| \mathcal F_{\H} u_0 \|_{L^{2}(\wh \H^d)} = C \|  u_0 \|_{L^{2}(\H^d)} \, ,
 $$
 where we used Plancherel formula.
 Now if~$u_0$ is frequency localized in the ball~${\mathcal B}_\Lambda$, then by virtue of\refeq{newFourierconvoleq-1} the function
 $$
 u_{0,\Lambda}\eqdefa u_0 \circ \d_{\Lambda^{-1}} $$
is frequency localized in~${\mathcal B}_1$,  and  it gives rise to the solution
$$
 u_{\Lambda}(t,Y,s)\eqdefa u ( \Lambda^{-2}t,\Lambda^{-1}Y, \Lambda^{-2}s)
$$
of $(S_\H)$. Since
$$
\|u_{\Lambda}\|_{L^\infty_s L_t^{q} L^{p}_{Y}} =\Lambda^{\frac2q + \frac{2d}p} \|u\|_{L^\infty_s L_t^{q} L^{p}_{Y}}
$$
and
$$
\|u_{0,\Lambda}\|_{L^{2}(\H^d)} =\Lambda^{d+1} \|u_0\|_{L^{2}(\H^d)}\, ,
$$
we infer that for~$u_0$   frequency localized in the ball~${\mathcal B}_\Lambda$, there holds
 $$ 
 \|u\|_{L^\infty_s L_t^{q} L^{p}_{Y}} \leq C\Lambda^{d+1-\frac2q-\frac{2d}p} \| u_0 \|_{L^{2}(\H^d)} \, .
 $$
The result follows from the fact that for any~$s \geq 0$,  if~$u_0$ is frequency localized in the ball~${\mathcal B}_\Lambda$, then
$$
\Lambda^s \|u_0 \|_{L^{2}(\H^d)} \lesssim \|u_0\|_{H^s(\H^d)} \, , 
$$
and we conclude the proof of the estimate by density of spectrally localized functions in~$H^s(\H^d)$.  This ends the proof of Theorem~\ref {STth}.\qed

 \subsection{Strichartz estimates for the wave operator on $\H^{d}$ }\label {stSchrwave}
 The aim of this section is to prove the Strichartz estimate stated in Theorem~\ref{STthwave}.  The method is identical to the previous section: again we reduce to the free wave equation and consider for~$(u_0,u_1)$    in~$\cS_{\rm {rad}}(\H^d)$ frequency localized in a unit ring in the sense of Definition\refer{freqlocgen},
   the Cauchy problem
$$
(W_\H)\qquad \left\{
\begin{array}{c}
 \partial_t^2 u -\D_\H u = 0\\
(u,\partial_t u)_{|t=0} = (u_0,u_1) \, .
\end{array}
\right. 
$$
Taking the Fourier transform we find that for all~$(t, \wh w)$ in~$\R \times \wt \H^d$,
 $$
\left\{
\begin{array}{rcl}
\ds   \frac{ d^2}   {dt^2}  \cF_\H(u) (t, n,m, \lam)&\! =\!  &- 4|\lam|(2|m|+d) \cF_\H(u) (t, n,m, \lam)\\
( \cF_\H u, \cF_\H\partial_t u)_{|t=0} &\!=\! &   {( \cF_\H u_0, \cF_\H u_1)}\, .
\end{array}
\right.
$$
By integration, this  leads to
 $$
\cF_\H(u) (t, n,m, \lam)=  \sum_{\pm} e^{\pm 2i t \sqrt{|\lam|(2|m|+d})}\cFH (\gamma_{\pm}) (|n|, |n|, \lam) \delta_{n,m}\, ,
$$
where similarly to~(\ref{defgammapm}) we have defined
$$
\cFH (\gamma_{\pm}) (n, n, \lam) \eqdefa\frac12 \Big(
\cFH  (u_0) \pm \frac1 {i\sqrt{4|\lambda| (2n+d)}}\cFH  (u_1) 
\Big)\, . 
$$
Then applying the  inverse Fourier  formula \eqref{inverseFourierH}, we infer that  the solution of the Cauchy problem $(W_\H)$ can be expressed as follows:
\beq
\label {solSH}u(t, Y, s)=  \sum_{\pm} \frac {2^{d-1}}  {\pi^{d+1} }   \int_{\wh \H^d} 
e^{is\lam} \, \cW(\wh w, Y) \, e^{\pm 2i t \sqrt{|\lam|(2|m|+d})} \, \cFH (\gamma_{\pm}) (|n|, |n|, \lam) \delta_{n,m} \, d\wh w  \, . \eeq
This can be written as the inverse Fourier transform in $\wh {\mathbb D}$ of $\cFH (\gamma_{\pm}) \, d\Sigma_{\pm}$, where  we recall that
by Remark~\ref{remarkwaves},
 for any regular function $\Phi $ on $\wh {\mathbb D}$
\beq
\label {formalmeasure} \int_{\wh {\mathbb D}} \Phi(\alpha,\wh w) \, d\Sigma_{\pm}(\alpha,\wh w)= \int_{\wh\H^d} \Phi(\sqrt{4|\lam|(2|m|+d}),\wh w) \,d\wh w \, .\eeq

\smallskip By hypothesis~$\gamma_{\pm}$ is frequency localized in a unit ring. Then  for any~$2\leq p \leq q \leq \infty$,  we have by virtue of the restriction inequality~\eqref{restSigduale} $$ 
 \|u\|_{L^\infty_s L_t^{q} L^{p}_{Y}} \leq C \| {\mathcal F}_\H^{-1}\gamma_{\pm} \|_{L^{2}(\H^d)} \, .
 $$
 Next if~$\gamma_{\pm}$ is frequency localized in the ring~$\mathcal{C}_\Lambda$, then by  \refeq{newFourierconvoleq-1} the function
 $$
 u_{0,\Lambda}\eqdefa u_0 \circ \d_{\Lambda^{-1}} $$
is frequency localized in the ring~$\mathcal{C}_1$ and gives rise to the solution
$$
 u_{\Lambda}(t,Y,s)\eqdefa u ( \Lambda^{-1}t,\Lambda^{-1}Y, \Lambda^{-2}s)
$$
of $(W_\H)$. Since
$$
\|u_{\Lambda}\|_{L^\infty_s L_t^{q} L^{p}_{Y}} =\Lambda^{\frac1q + \frac{2d}p} \|u\|_{L^\infty_s L_t^{q} L^{p}_{Y}}
$$
and
$$
\|u_{0,\Lambda}\|_{L^{2}(\H^d)} =\Lambda^{d+1} \|\gamma_{\pm}\|_{L^{2}(\H^d)}\, ,
$$
we infer that for~$u_0$   frequency localized in the ring~$\cC_\Lambda$, there holds
 $$ 
 \|u\|_{L^\infty_s L_t^{q} L^{p}_{Y} } \leq C\Lambda^{d+1-\frac1q-\frac{2d}p} \| \gamma_{\pm} \|_{L^{2}(\H^d)} \, .
 $$
 Then by Bernstein's Lemma~\ref{bernstein}, and in particular estimate~(\ref{eq:lech3}), followed by   Plancherel's inequality we infer that
 $$
  \|u\|_{L^\infty_s L_t^{q} L^{p}_{Y}} \leq  C\Lambda^{d-\frac1q-\frac{2d}p}  
  \Big( \| \nabla_{\H^d}u_0 \|_{L^{2}(\H^d)}  +\| u_1 \|_{L^{2}(\H^d)} \Big)\, .
 $$
To conclude the proof of the estimate, we argue as for the Schr\"odinger equation making use of  
the fact that for any real~$s$,  if a function~$f$ is frequency localized in the   ring~${\mathcal C}_\Lambda$, then
$$
\Lambda^s \|f \|_{L^{2}(\H^d)} \leq C \|f\|_{H^s(\H^d)} \, .
$$
 This proves Theorem~\ref{STthwave}.
 \qed

\appendix

\section{Proof of a technical result}\label{proofprop}

We have used the following lemma due to M\"uller (see\ccite{Muller}), whose proof we sketch below.
  \begin{lemma}
\label {LemmeMuller}
{\sl Defining for  $f$, $g$ in  $\cS(T^\star\R^d)$ and $\lam$ in $\R\setminus\{0\}$ the $\lam$-twisted convolution
\beq
\label  {twisted}
(f \star_\lam g )(Y)\eqdefa \int_{T^\star\R^d} f(Y-w) g(w) e^{2i\lam \sigma(Y,w)}dw\, , 
\eeq
with~$\sigma$ defined in~{\rm(\ref{defsigma})},  the following estimate  holds: there exists a positive constant $C_{d}$ such that 
\beq
\label {est2} \|f \star_\lam \wt \cW(\ell,\lam,\cdot) \|_{L^{p}(T^\star\R^d)}\leq C_{d} \, |\lam|^{- \frac {2d} {p'}}\, \ell^ {(d-1)(1-\frac {2} {p'})}\, \|f \|_{L^{p}(T^\star\R^d)}\, , 
\eeq 
for all $1\leq p \leq 2$ and all integers $\ell \geq 1$,  where the function $ \wt \cW(\ell,\lam,Y)$ is defined by~\eqref{princident}. }
\end{lemma}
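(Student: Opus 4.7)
My plan is to prove~\eqref{est2} by Riesz--Thorin interpolation between the two endpoints $p=1$ and $p=2$, after a preliminary rescaling to the case $|\lam|=1$. The rescaling step rests on a dilation identity: setting $\tilde h(Y)\eqdefa h(Y/\sqrt{|\lam|})$, a change of variable in~\eqref{twisted} combined with the bilinearity of~$\sigma$ yields
$$
(f\star_\lam g)(Y)=|\lam|^{-d}\,(\tilde f\star_{\mathrm{sgn}(\lam)}\tilde g)(\sqrt{|\lam|}\,Y),
$$
while direct inspection of~\eqref{princident} shows that $\tilde g=\wt\cW(\ell,1,\cdot)$ whenever $g=\wt\cW(\ell,\lam,\cdot)$. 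Tracking this identity through the $L^p$ norms reduces the proof to operator bounds at $\lam=\pm1$, the $|\lam|$-powers in~\eqref{est2} emerging from the rescaling factors.

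For the $p=2$ endpoint I would use the fact that the operator $T_\ell\colon f\mapsto f\star_\lam \wt\cW(\ell,\lam,\cdot)$ is, up to a scalar, a spectral projection in $L^2(T^\star\R^d)$. Since $\wt\cW(\ell,\lam,\cdot)=\sum_{|n|=\ell}\cW(n,n,\lam,\cdot)$ is a sum of diagonal Wigner transforms of Hermite functions (cf.~\eqref{princident}), the Peter--Weyl-type orthogonality relations for the $\cW(n,m,\lam,\cdot)$ recalled in Section~\ref{Fourier} give an identity of the form $\wt\cW(\ell,\lam,\cdot)\star_\lam \wt\cW(\ell',\lam,\cdot)=c_d|\lam|^{-d}\delta_{\ell\ell'}\wt\cW(\ell,\lam,\cdot)$; combined with the evenness of $\wt\cW(\ell,\lam,\cdot)$, which makes $T_\ell$ self-adjoint on $L^2$, this forces $T_\ell/(c_d|\lam|^{-d})$ to be an orthogonal projection, and hence $\|T_\ell\|_{L^2\to L^2}\lesssim |\lam|^{-d}$ uniformly in~$\ell$.

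For the $p=1$ endpoint at $\lam=\pm1$, the pointwise majorization $|f\star_{\pm1}g|\leq |f|\star|g|$ and Young's inequality reduce the problem to the weighted $L^1$ norm of the Laguerre function: polar integration in~$\R^{2d}$ and the substitution $x=2r^2$ give
$$
\|\wt\cW(\ell,1,\cdot)\|_{L^1(\R^{2d})}\;\simeq\;\int_0^\infty e^{-x/2}\,|L_\ell^{(d-1)}(x)|\,x^{d-1}\,dx,
$$
and classical Askey--Hilb / Erd\'elyi asymptotics for $L_\ell^{(d-1)}(x)e^{-x/2}$ --- amplitude of order $(\ell x)^{-1/4}$ in the oscillatory bulk $x\in [c,4\ell]$ and exponential decay outside --- yield the bound $\lesssim \ell^{d-1}$, whence $\|T_\ell\|_{L^1\to L^1}\lesssim \ell^{d-1}$ at $\lam=\pm1$. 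Riesz--Thorin interpolation between the two endpoints then gives $\|T_\ell\|_{L^p\to L^p}\lesssim \ell^{(d-1)(1-2/p')}$ for $1\leq p\leq 2$, and combining with the rescaling of the first paragraph produces~\eqref{est2}.

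The hard part will be the Laguerre $L^1$ estimate: the crude pointwise bound $|L_\ell^{(d-1)}(x)|\leq \binom{\ell+d-1}{\ell}\simeq \ell^{d-1}$ produces an overly large constant once integrated against the $x^{d-1}$-weight, so the sharp $\ell^{d-1}$ behaviour genuinely requires the oscillatory Erd\'elyi expansion to exploit the cancellation of $L_\ell^{(d-1)}$ in the bulk region.
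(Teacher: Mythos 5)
Your $L^2$ endpoint is essentially the paper's argument (orthogonality of the projections $T_n f= f\star_\lam \cW(n,n,\lam,\cdot)$, giving $\|T_\ell\|_{\cL(L^2)}\lesssim |\lam|^{-d}$ uniformly in $\ell$), and that part is fine. The gap is in the choice of the second endpoint, and it is fatal. First, note that the exponents in \eqref{est2}, and the way the lemma is invoked in the proof of Theorem~\ref{Restpraphase} (an $L^{p'}_Y$ norm is controlled by an $L^p_Y$ norm), show that the estimate is an $L^p\to L^{p'}$ bound: the left-hand side of \eqref{est2} should read $L^{p'}$. Your own rescaling identity makes this unavoidable: if $\|\tilde f\star_{\pm 1}\tilde g\|_{L^{q}}\leq A\|\tilde f\|_{L^{p}}$, then undoing the dilation gives $\|f\star_\lam g\|_{L^{q}}\leq A\,|\lam|^{-d-\frac dq+\frac dp}\|f\|_{L^{p}}$, which equals $A\,|\lam|^{-2d/p'}$ only when $q=p'$; for $q=p$ the factor is $|\lam|^{-d}$ for \emph{every} $p$. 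So the scheme ``interpolate $L^1\to L^1$ with $L^2\to L^2$'', which targets $L^p\to L^p$, cannot produce the factor $|\lam|^{-2d/p'}$, contrary to your claim that the $|\lam|$-powers ``emerge from the rescaling factors''.

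Second, the $L^1\to L^1$ endpoint itself is false. Since $\|T_\ell\|_{\cL(L^1)}=\|\wt\cW(\ell,\pm1,\cdot)\|_{L^1}$ (upper bound by Young, lower bound by testing on an approximate identity), everything hinges on the integral you write down, and the Erd\'elyi/Askey--Wainger asymptotics you invoke give
$$\int_0^\infty e^{-x/2}\,\bigl|L_\ell^{(d-1)}(x)\bigr|\,x^{d-1}\,dx\;\sim\;\ell^{\,d-\frac12},$$
not $O(\ell^{d-1})$: in the oscillatory bulk $|e^{-x/2}L_\ell^{(d-1)}(x)|\sim \ell^{(d-1)/2-1/4}x^{-(d-1)/2-1/4}$, and integrating this against $x^{d-1}$ over $x\lesssim \ell$ yields $\ell^{d-1/2}$ (even your cruder amplitude $(\ell x)^{-1/4}$ gives the same power). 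Already for $d=1$ the $L^1$ norm of $e^{-x/2}L_\ell$ grows like $\ell^{1/2}$, whereas your endpoint would require it to be bounded; taking absolute values inside the integral destroys the cancellation, so no oscillatory argument can rescue the bound $\ell^{d-1}$. The paper's (correct and much cheaper) second endpoint is $L^1\to L^\infty$: since $|\cW(n,n,\lam,Y)|\leq 1$ and $\#\{n:|n|=\ell\}=\binom{\ell+d-1}{\ell}\sim\ell^{d-1}$, one has $\|\wt\cW(\ell,\lam,\cdot)\|_{L^\infty}\lesssim\ell^{d-1}$, hence $\|f\star_\lam\wt\cW(\ell,\lam,\cdot)\|_{L^\infty}\lesssim\ell^{d-1}\|f\|_{L^1}$ with no Laguerre asymptotics at all; Riesz--Thorin between this and the $L^2\to L^2$ bound gives exactly $L^p\to L^{p'}$ with constant $|\lam|^{-2d/p'}\ell^{(d-1)(1-2/p')}$.
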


\begin{proof}[Proof of Lemma {\rm\ref{LemmeMuller}}]\label{prooflemma}
Let us start by  establishing that    for $f$, $g$ in $\cS(T^\star\R^d)$ and  $1\leq p \leq 2$, the following estimate holds:
 $$
 \|f \star_\lam g \|_{L^{p'}(T^\star\R^d)}\leq C_{p,d} \, |\lam|^{- \frac {d} {p'}} \, \|f \|_{L^{p}(T^\star\R^d)} \|g \|^{\frac {2} {p'}} _{L^{2}(T^\star\R^d)} \|g \|^{1-\frac {2} {p'}} _{L^{\infty}(T^\star\R^d)}\, . $$  
By definition,  
$$
(f \star_\lam g )(Y)= \int_{T^\star\R^d} f(Y-w) g(w) e^{2i\lam \sigma(Y,w)}dw\, , 
$$
 which easily implies by  Young's inequalities that 
 $$
 \|f \star_\lam g \|_{L^{\infty}(T^\star\R^d)}\leq   \|f \|_{L^{1}(T^\star\R^d)} \|g \|_{L^{\infty}(T^\star\R^d)}\, . 
 $$  
 Therefore invoking the method of real   interpolation, we are reduced  to showing that 
 $$
 \|f \star_\lam g \|_{L^{2}(T^\star\R^d)}\leq C_{d} \, |\lam|^{- \frac {d} {2}} \, \|f \|_{L^{2}(T^\star\R^d)} \|g \|_{L^{2}(T^\star\R^d)}\, ,
 $$ 
and this follows by an easy dilation argument from the well-known fact that $(L^2(T^\star\R^d), \star_1)$ is a Hilbert algebra (see for instance\ccite{howe}). 
 
 \medskip Let us now focus on Estimate \eqref{est2}.   We first recall that 
   $$
  \wt \cW(\ell,\lam,Y) = \sum_{|n|= \ell} \cL_n(|\lam|^{\frac {1} {2}} \, Y)\, , 
  $$ where for $Z=(Z_1, \cdots, Z_d)$ in $T^\star\R^d$, we denote $\cL_n(Z)=  e^{-|Z|^2} \Pi^d_{j=1} L_ {n_j}( 2 |Z_j|)$ with $L_ {n_j}$ the  Laguerre polynomial of order $n_j$ and type $0$.

  Define now for $n \in \N^d$ the operator $T_n$   on $L^{2}(T^\star\R^d)$ by
\begin{equation}
\label{defTn} T_n f\eqdefa f  \star_\lam \cL_n(|\lam|^{\frac {1} {2}} \cdot)\, \virgp \end{equation}
 so that 
\begin{equation}
\label{defT}
 Tf= f \star_\lam \wt \cW(\ell,\lam,\cdot)= \sum_{|n|= \ell} T_n f \, .
\end{equation}
 Then using the fact that the Laguerre polynomials $(L_k)_{k \in \N}$ are pairwise orthogonal on~$[0, \infty[$ with respect to the measure $e^{-x} dx$, we infer that the family of  operators $(T_n)_{|n|= \ell}$ is also pairwise orthogonal, and thus denoting by $ \| T \|$ the norm of $T$ defined by \eqref{defT} as an operator on $L^{2}(T^\star\R^d)$, we can conclude that 
\begin{equation}
\label{normT} \| T \|_{\cL(L^2(T^\star\R^d))} = \max_{|n|= \ell}\| T_n \|_{\cL(L^2(T^\star\R^d))} \, .\end{equation}
 Since $\| \cL_n(|\lam|^{\frac {1} {2}} \cdot)\|_{L^2(T^\star\R^d)} = |\lam|^{- \frac {d} {2}} \, \| \cL_n\|_{L^2(T^\star\R^d)}$, 
  one obtains 
$$\| T_n \|_{\cL(L^2(T^\star\R^d))} \lesssim    |\lam|^{- d} \,  .$$ 
 In view of \eqref{normT}, this implies that 
  $$\|f \star_\lam \wt \cW(\ell,\lam,\cdot) \|_{L^{2}(T^\star\R^d)}\lesssim  |\lam|^{- d} \,  \|f \|_{L^{2}(T^\star\R^d)}\, .  $$
Finally, by definition  
$$\|\wt \cW(\ell,\lam,\cdot) \|_{L^{\infty }(T^\star\R^d)}\lesssim \ell^{d-1} \,  ,$$
which ensures that 
 $$\|f \star_\lam \wt \cW(\ell,\lam,\cdot) \|_{L^{\infty}(T^\star\R^d)}\lesssim  \ell^{d-1} \,   \|f \|_{L^{1}(T^\star\R^d)}\, ,  $$
 and completes the proof of Estimate \eqref{est2} by interpolation.

 \end{proof}
 
 \section{The inhomogeneous case}\label{proofprop}
 Denoting by~$(\mathcal U(t))_{t\in \R} $ the solution operator of the Schr\"odinger equation on the Heisenberg group, namely~$\mathcal U(t)u_0$ is the solution of~$(S_\H)$ with $f=0$ at time $t$ associated with the data~$u_0$, then similarly to the euclidean case   $(\mathcal U(t))_{t\in \R} $  is a one-parameter group of unitary operators on $L^2(\H^d)$. Moreover,   the solution to the inhomogeneous  equation  $$
  \left\{
\begin{array}{c}
i\partial_t u -\D_\H u = f\\
u_{|t=0} = 0\,,
\end{array}
\right. $$
 writes 
\begin{equation}
\label{estlinh} u(t, \cdot)= -i \int^t_0 \mathcal U(t-t') f(t', \cdot) dt'  ,\end{equation}
Let us check that it satisfies, for all  admissible pairs $(p,q)$  in $
\cA^{\mbox{\tiny{S}}}$,  
\begin{equation}
\label{estlinhst} \|u\|_{L^\infty_s L_t^{q} L^{p}_{Y}} \lesssim \|f \|_{L_t^1 H^{\sigma}(\H^d)} \end{equation}
with $\ds \sigma= \frac Q2 - \frac2q-\frac{2d}p\cdotp $ 

 Let us first assume that, for all $t$,  the source term $f(t, \cdot) $ is frequency localized in  in  the unit ball~${\mathcal B}_1$ in the sense of Definition\refer{freqlocgen}, and  recall that according to the results of Section\refer{stSchr}, if $g$ is frequency localized in a unit ball, then  for all~$2\leq p \leq q \leq \infty$ 
\begin{equation}
\label{estl2}
 \|\mathcal U(t)g\|_{L^\infty_s L_t^{q} L^{p}_{Y}} \lesssim \|  g \|_{L^{2}(\H^d)} \, .
\end{equation}
Taking advantage of\refeq{estlinh}, we have 
 for all $s \in \R$, $$\|u(t, \cdot, s) \|_{L^{p}_{Y}} \leq \int_\R \|\mathcal  U(t) \mathcal U(-t') f(t', \cdot, s) \|_{L^{p}_{Y}} dt' . $$
 Therefore, still  for all $s$,
 $$\|u(\cdot,\cdot, s) \|_{L_t^{q} L^{p}_{Y}} \leq \int_\R   \|\mathcal  U(\cdot) \mathcal U(-t') f(t', \cdot, s) \|_{L_t^{q} L^{p}_{Y}} dt' .$$ 
 Invoking\refeq{estl2}, we deduce that 
  $$\|u\|_{L^\infty_s L_t^{q} L^{p}_{Y}} \leq \int_\R \|\mathcal U(-t') f(t', \cdot) \|_{{L^{2}(\H^d)}} dt' .$$ 
Since   $\mathcal U(-t')$ is unitary on $L^{2}(\H^d)$, we readily gather that 
\begin{equation}
\label{estlinhgamma} \|u \|_{L^\infty_s L_t^{q} L^{p}_{Y}} \leq \int_\R \|f(t', \cdot) \|_{{L^{2}(\H^d)}} dt' .\end{equation}
 Now if for all $t$,~$f(t, \cdot)$ is frequency localized in a ball of size $\Lambda$, then setting
  $$
 f_{\Lambda}(t, \cdot) \eqdefa \Lambda^{-2} f (\Lambda^{-2} t, \cdot) \circ \d_{\Lambda^{-1}} $$
we find that on the one hand, $f_{\Lambda}(t, \cdot)$ is frequency localized in a unit ball for all $t$,  and  on the other hand  that the solution to the Cauchy problem $$
  \left\{
\begin{array}{c}
i\partial_t u_{\Lambda} -\D_\H u_{\Lambda} = f_{\Lambda}\\
u_{|t=0} = 0\,,
\end{array}
\right. $$
writes
$$u_{\Lambda}(t,w)= u (\Lambda^{-2} t, \cdot) \circ \d_{\Lambda^{-1}} \,.$$
Now by scale invariance, we have
$$\int_\R \|f_{\Lambda}(t', \cdot) \|_{{L^{2}(\H^d)}} dt' = \Lambda^{\frac{Q}2}\int_\R \|f(t', \cdot) \|_{{L^{2}(\H^d)}} dt' $$
and 
$$
\|u_{\Lambda}\|_{L^\infty_s L_t^{q} L^{p}_{Y}}  =\Lambda^{\frac{2} {q} + \frac{2d} {p}}  \|u\|_{L^\infty_s L_t^{q} L^{p}_{Y}} \, .
$$
Consequently, we get 
 $$ 
 \|u\|_{L^\infty_s L_t^{q} L^{p}_{Y}} \leq C  \int_\R \Lambda^{\frac{Q}2- \frac{2} {q} - \frac{2d} {p}} \|f(t', \cdot) \|_{{L^{2}(\H^d)}} dt' \, .
 $$ 
Since  $\ds \frac{Q}2- \frac{2} {q} - \frac{2d} {p} \geq 0$,  we have 
$$\Lambda^{\frac{Q}2- \frac{2} {q} - \frac{2d} {p}} \|f(t', \cdot) \|_{{L^{2}(\H^d)}} \lesssim \|f(t', \cdot) \|_{{H^{\frac{Q}2- \frac{2} {q} - \frac{2d} {p}}(\H^d)}} \,, $$ which  completes the proof of\refeq{estlinhst}.

  \bigskip 
  In the case of the wave equation, we have   seen in Paragraph\refer{stSchrwave} that    when the Cauchy data $u_0$ and $u_1$ are frequency localized in a ring, then $u$ the solution to the Cauchy problem~$(W_\H)$ with $f=0$ reads, with the previous notations, 
  $$ u(t, \cdot)= \mathcal U^+ (t) \gamma^+ + \mathcal U^- (t) \gamma^- \, .$$
  This allows to investigate  the inhomogeneous wave equation  by similar arguments than  the Schr\"odinger equation dealt with above. The proof is left to the reader.


\begin{thebibliography}{50}

 \bibitem{ABGR09} 
A.~Agrachev, U.~Boscain, J.P.~Gauthier and
              F.~Rossi, The intrinsic hypoelliptic {L}aplacian and its heat kernel on
              unimodular {L}ie groups, {\it J. Funct. Anal.}, {\bf 256}, pages 2621--2655, 2009.

 \bibitem{anker} J.-Ph.~Anker and   V.~Pierfelice, Nonlinear Schr\"odinger equation on real hyperbolic spaces,  {\it Ann.\ Inst.\ H.\ Poincar\'e (C) Non Linear Analysis},  {\bf  26}, pages 1853--1869,  2009. 


 \bibitem{askey} R. Askey and   and S. Waigner, Mean convergence of expansions in Laguerre and Hermite series,  {\it American  Journal of  Mathematics},  {\bf  87}, pages 695--708,  1965. 
 
 \bibitem{bch} H. Bahouri and J.-Y. Chemin,   \'Equations d'ondes quasilin\'eaires
et in\'egalit\'es de Strichartz, {\it American Journal of
Mathematics}, {\bf 121},  pages 1337-1377, 1999.


\bibitem{bch2}  H. Bahouri and  J.-Y. Chemin,  Microlocal analysis,
bilinear estimates and cubic quasilinear wave equation,   {\it Ast\'erisque, Bulletin de la Soci{\'e}t{\'e}
Math{\'e}\-ma\-ti\-que de France}, pages  93-142,  2003.
 
 \bibitem{BCD1}
H.~Bahouri,  J.-Y. Chemin et R. Danchin,  {\it Fourier analysis and
applications to nonlinear partial differential equations},  
Grundlehren der Mathematischen Wisserchaften, Springer Verlag, {\bf 343}, 2011.




\bibitem{bcdh2} H. Bahouri, J.-Y. Chemin and R. Danchin,  
Fourier transform of tempered distributions on the Heisenberg group,   {\it Annales Henri Lebesgue}, {\bf  1}, pages 1--45, 2018. 

\bibitem{bcdh} H. Bahouri, J.-Y. Chemin and R. Danchin,  A frequency  space  for the Heisenberg group,     {\it Annales de l'Institut de Fourier}, {\bf  69}, pages 365--407, 2019. 

\bibitem{bfg}
H. Bahouri, C. Fermanian-Kammerer and  I. Gallagher,  Dispersive estimates for the Schr\"odinger operator on step 2 stratified Lie groups,  {\it Analysis and PDE}, {\bf  9}, pages 545--574,  2016.

\bibitem{bfg2}
H. Bahouri, C. Fermanian-Kammerer and  I. Gallagher,  {\it Phase-space analysis and pseudo-differential calculus on the Heisenberg group},  
Ast\'erisque, Bulletin de la Soci{\'e}t{\'e}
Math{\'e}\-ma\-ti\-que de France, {\bf  340}, 2012.

\bibitem{bahourigallagher}
H. Bahouri and I. Gallagher:  Paraproduit sur le groupe de Heisenberg et applications,
{\it  Revista  Matem\'atica Iberoamericana}, {\bf 17}(1),   pages  69--105, 2001.

\bibitem{bgx}
H. Bahouri, P. G\'erard, and C.-J. Xu,  Espaces de Besov et estimations de Strichartz g\'en\'eralis\'ees sur le groupe de Heisenberg, 
{\it Journal d'Analyse Math\'ematique}, {\bf 82},   pages 93--118, 2000.


\bibitem{Ber13}
F.~Bernicot,  and El M.~Ouhabaz, Restriction estimates via the derivatives of the heat semigroup and connection with dispersive estimates,
{\it Math. Res. Lett.}, {\bf 20}, pages 1047--1058, 2013.


\bibitem{Ber17}
F.~Bernicot,  and V.~Samoyeau, Dispersive estimates with loss of derivatives via the heat
              semigroup and the wave operator, {\it Ann. Sc. Norm. Super. Pisa Cl. Sci. (5)},  {\bf 17}, pages 969--1029, 2017.


  \bibitem{bourgain1} J.~Bourgain,  A remark on Schr\"odinger operators,  {\it Israel Journal of Mathematics}, {\bf 77}, pages 1--16, 1992.
  
    \bibitem{bourgain2} J.~Bourgain,  Refinements of Strichartz' inequality and applications to 2D-NLS with critical nonlinearity, {\it International Mathematical Research Notices}, {\bf 5} pages  253--283, 1998.
  
    \bibitem{bourgain} J.~Bourgain,  Some new estimates on oscillatoryon  integrals,  {\it Essays on Fourier Analysis in Honor of Elias M. Stein, Princeton Math}, {\bf 42}, pages 83--112,  1995.
    
        \bibitem{bourgain3} J.~Bourgain,  Fourier transform restriction phenomena for certain lattice subsets and application to nonlinear evolution equations I,  {\it Geom. and Funct. Anal.}, {\bf 3},   pages 107--156, 1993.



\bibitem{bgt} N. Burq,  P. G\'erard and N.  Tzvetkov, 
Strichartz inequalities and the nonlinear Schr\"odinger equation on compact manifolds, {\it American Journal of
Mathematics}, {\bf 126}, pages 569--605, 2004.

\bibitem{CC13}
V.~Casarino and P.~Ciatti, A restriction theorem for {M}\'{e}tivier groups,
{\it Advances in Mathematics}, {\bf 245}, pages 52--77, 2013.

\bibitem{CC17}
V.~Casarino and P.~Ciatti, Restriction estimates for the free two step nilpotent group on three generators,
{\it arXiv e-prints}, arXiv:1704.07876, 2017.



\bibitem{Caz1}
T.~Cazenave,  Equations de Schr\"odinger non lin\'eaires en dimension deux,  {\it Proceedings of the Royal Society of Edinburgh. Section A},  {\bf 84},  pages 327--346,  1979.


\bibitem{CK}
M.~Christ and A. Kieslev,  Maximal operators associated to filtrations,  {\it Journal of Functional Analysis},  {\bf 179},  pages 409-425,  2001.

\bibitem{corwingreenleaf} L.-J. Corwin and F.-P. Greenleaf, {\it Representations of nilpotent Lie groups and their applications, Part 1: Basic theory and examples,}  Cambridge studies in advanced Mathematics,  {\bf 18}, Cambridge university Press, 1990.

\bibitem{hiero}  M. Del Hierro,
Dispersive and Strichartz estimates on H-type groups, {\it Studia Math}, {\bf
169},  pages 1--20, 2005.

\bibitem{Tricomi} A. Erd\'elyi, W. Magnus, F. Oberhettinger, and F.-G. Tricomi,
  {\it   Higher Transcendental Functions}, New York, McGraw-Hill, {\bf 2},  1953.
 
 \bibitem{farautharzallah} J. Faraut and K. Harzallah: {\em Deux cours d'analyse harmonique},
 \'Ecole d'\'Et\'e d'analyse harmonique de Tunis.
Progress in Mathematics, Birkh\"auser, 1984.
 
\bibitem{Fefferman} C. Fefferman,  Inequalities for strongly singular convolution operators, {\it  Acta Math}, {\bf 124},  pages  9--36,  1970.

\bibitem{gassot} L. Gassot,  On the radially symmetric traveling waves for the Schr\"odinger equation on the Heisenberg group, {\it  arXiv:1904.07010}, to appear in {\it Pure and Applied Analysis}.

\bibitem{gaveau}
B. Gaveau, Principe de moindre action, propagation de la chaleur
et estim\'ees sous-elliptiques sur certains groupes nilpotents,
{\em Acta Mathematica}, {\bf 139},  pages 95--153, 1977.

\bibitem{ginibrevelo} J. Ginibre and G. Velo, Generalized Strichartz inequalities for the wave equations, {\it  Journal of  Functional Analysis}, {\bf  133}, pages  50--68,    1995.
 \bibitem{howe} R. Howe, 
Quantum mechanics and partial differential equations, {\it Journal of Functional Analysis}, {\bf 38}, pages 188--254, 1980.


\bibitem{hul} A. Hulanicki, 
A functional calculus for Rockland operators on
nilpotent Lie groups, {\em Studia Mathematica}, {\bf 78}, pages 253--266, 1984.

\bibitem{lebeau} O.  Ivanovici,  G.  Lebeau and F. Planchon, Dispersion for the wave equation inside strictly convex domains I: the Friedlander model case,  {\it Annals of Mathematics}, {\bf 180}, pages 323--380, 2014.

\bibitem{LS11}
H.~Liu and M.~Song, A restriction theorem for the {H}-type groups,
{\it Proc. Amer. Math. Soc.}, {\bf 139}, pages 2713--2720, 2011.

\bibitem{LS16}
H.~Liu and M.~Song, A restriction theorem for {G}rushin operators,
{\it Front. Math. China}, {\bf 11}, pages 365--375, 2016.



\bibitem{keeltao} M. Keel and T. Tao,  Endpoint Strichartz estimates, {\it American Journal of  Mathematics},  {\bf 120}, pages
   955--980,  1998.
    
 \bibitem{Muller}  D. M\"uller,  A restriction theorem for the Heisenberg group, {\it Annals of Mathematics},  {\bf  131}, pages 567--587,  1990.
 
   \bibitem{nach} A. I. Nachman,
 The Wave Equation on the Heisenberg
Group, {\it  Communications in Partial Differential Equations},
{\bf 7}, pages 675--714, 1982.
 \bibitem{RS76}
L.P.\ Rothschild and E.M.\ Stein, Hypoelliptic differential operators and nilpotent groups. {\it Acta Math.}, {\bf 137}, pages 247--320, 1976.
 
 \bibitem{stein} E.M.\ Stein, 
{\it Harmonic Analysis: Real-Variable Methods, Orthogonality, Oscillatory integrals,} Princeton University Press, 1993.

\bibitem{strichartz}
R. Strichartz, Restriction Fourier transform of quadratic surfaces
and decay of solutions of the wave equations, {\it Duke
Mathematical Journal}, {\bf 44}, pages 705-714,   1977.

 
 \bibitem{Tao} T. Tao,  Some recent progress  on the restriction conjecture,  {\it Fourier Analysis and Convexity},
Appl. Numer. Harmon. Anal., Birkhauser Boston, pages 217-243, 2004.


 \bibitem{Tataru}  D. Tataru,   Strichartz estimates for operators with nonsmooth coefficients and the nonlinear wave equation,  {\it American Journal of
Mathematics}, {\bf 122}, pages 349-376,   2000.  
 
 \bibitem{Tomas} P. A. Tomas, A restriction theorem for the Fourier transform,  {\it Bull. Amer. Math. Soc.},  {\bf  81}, pages 477-478,  1975.\end{thebibliography}
\end{document}